\newcommand{\bv}{\bigvee}
\newcommand{\bw}{\bigwedge}
\newcommand{\sL}{\mathscr L}
\newcommand{\cA}{\mathcal A}
\newcommand{\cB}{\mathcal B}
\newcommand{\cC}{\mathcal C}
\newcommand{\cG}{\mathcal G}
\newcommand{\ra}{\rightarrow}
\newcommand{\join}{\mathsf{j}}
\newcommand{\meet}{\mathsf{m}}
\theoremstyle{plain}
\newtheorem{thm}{Theorem}[section]
\newtheorem{prop}[thm]{Proposition}
\newtheorem{cor}[thm]{Corollary}
\newtheorem{lemma}[thm]{Lemma}
\newtheorem{ex}[thm]{Example}
\theoremstyle{definition}
\newtheorem{defn}[thm]{Definition}
\title{Recursive axiomatisations from separation properties}
\author{Rob Egrot}
\date{}
\address{Faculty of ICT, Mahidol University, 999 Phutthamonthon 4 Rd, Salaya, Nakhon Pathom 73170, Thailand}
\email{robert.egr@mahidol.ac.th}
\keywords{First-order axiomatisability, generating recursive axiomatisations, representable posets, graph colourings, harmonious colourings, disjoint union partial algebras}
\subjclass[2010]{Primary 03C98. Secondary 03B15, 03B70, 05C15, 08A55.}
\begin{document}

\begin{abstract}
We define a fragment of monadic infinitary second-order logic corresponding to an abstract separation property. We use this to define the concept of a \emph{separation subclass}. We use model theoretic techniques and games to show that separation subclasses whose axiomatisations are recursively enumerable in our second-order fragment can also be recursively axiomatised in their original first-order language. We pin down the expressive power of this formalism with respect to first-order logic, and investigate some questions relating to decidability and computational complexity. As applications of these results, by showing that certain classes can be straightforwardly defined as separation subclasses, we obtain first-order axiomatisability results for these classes. In particular we apply this technique to graph colourings and a class of partial algebras arising from separation logic.
\end{abstract}
\maketitle

\section{Introduction}
We begin with a motivating example. Precise definitions will be given in the next section. A partially ordered set (poset) is \emph{representable} if it can be embedded into a powerset algebra via a map that preserves existing finite meets and joins. The class of representable posets and its infinitary variations have been studied, not always using this terminology, in \cite{CheKem92,Kem93,HicMon84,VanA16,Egr16,Egr17a,Egr17b,Egr18,Egr19}, generalising work done in the setting of semilattices \cite{Bal69,Sch72,CorHic78,Kea97}, and for distributive lattices and Boolean algebras \cite{Bir33,Stone36,Loom47,Sik48,BirFri48,Cha57,ChaHor62,Abi71,EgrHir12}. At first glance, it is far from obvious that the class of representable posets is elementary. However, it is fairly easy to show that a poset is representable if and only if it has a `separating' set of `prime filters'. More precisely, a poset $P$ is representable if and only if whenever $p\not\leq q\in P$ there is a `prime filter' of $P$ containing $p$ and not $q$. Note that there are several non-equivalent concepts of `prime filter of a poset' in circulation, and we are using one in particular. A more precise definition is given in Example \ref{E:poset1}.

Now, given the description of the representable posets in terms of this `separation property', it is possible to show that it can in fact be axiomatised in first-order logic. \cite[Theorem 4.5]{Egr16} does this by proving closure under taking isomorphisms, ultraproducts and ultraroots and appealing to the Keisler-Shelah theorem \cite{Kei61,She71}, and similar can be done by proving closure under taking ultraproducts and elementary substructures and appealing to \cite[Theorem 2.13]{FMS62}. Such a non-constructive proof of existence may be regarded as being of limited practical use, however, the very fact that an axiomatisation is known to exist can be used in a neat trick to show that a certain constructively generated axiomatisation is correct. This is the main result of \cite{Egr19}.

The method of \cite{Egr19}, which is not novel, is to describe the `separation property' of representable posets in terms of a game played between two players. The game is defined so that the number of rounds a certain player can survive in a particular game corresponds, in a sense, to how close a given poset is to being representable. First-order axioms are then written down that correspond to the player `having a strategy' in a game. These axioms are shown to correctly axiomatise the class of representable posets by means of the `neat trick' mentioned previously. 

A similar idea appears in \cite{HirMcL17}, where it is used to find an explicit axiomatisation for a certain class of partial algebras of partial functions that appears in connection with separation logic. Again we have a class which is not obviously elementary, but which can fairly easily be shown to be definable in terms of a `separation property'. The separation property is then used to show, non-constructively, that a first-order axiomatisation exists, and then to construct explicit axioms based on games which are, using the `neat trick', shown to be an axiomatisation for the class.  

The main purpose of this paper is to prove a general theorem that includes the relevant results of \cite{HirMcL17,Egr19} as special cases, and is also applicable in a wide variety of other situations. The strategy is to first formalise the concept of a `separation property' in a way that allows the necessary results to go through, while also being intuitive enough to be useful in practice. This is done in Section \ref{S:sep}. In particular, the basic definition of a \emph{separation subclass} is made. The sense in which separation subclasses can be, for example, \emph{essentially countable}, or \emph{essentially recursively enumerable}, is also explained.

We formalise the concept of a separation subclass using infinitary monadic second-order logic. We show that if $\cA$ is a class of structures and $\cB$ is a subclass of $\cA$ that is elementary relative to $\cA$, then $\cB$ can always be described as separation subclass of $\cA$ (Proposition \ref{P:AxSep}). More interestingly, we show that every separation subclass of an elementary class has a first-order axiomatisation relative to the superclass (Theorem \ref{T:same}). Thus the classes of separation subclasses and elementary subclasses of an elementary class coincide. However, the important difference is that descriptions as separation subclasses can often be much easier to find than first-order axiomatisations. Moreover, as we shall see, provided the superclass is elementary, we can use a description of a subclass as an essentially recursively enumerable separation subclass to automate the construction of an explicit first-order axiomatisation.     

In Section \ref{S:game} we describe a class of games played between two players, $\forall$ and $\exists$. The key result is that, if $\cB$ is an essentially countable separation subclass of $\cA$, then given $A\in \cB$, the player $\exists$ has a strategy for never losing in every relevant game. Conversely, if $A\in\cA$ is countable, then $\exists$ having such strategies implies that $A\in\cB$ (Proposition \ref{P:game}). 

Section \ref{S:ax} formalises the existence of strategies for $\exists$ in first-order logic. The main result, which is stated as Corollary \ref{C:main}, is that an essentially recursively enumerable separation subclass $\cB$ of an elementary class $\cA$ always has a recursive first-order axiomatisation relative to $\cA$, which we can generate systematically by examining the relevant class of games. Moreover, we present simple sufficient conditions for the axiomatisation produced to be universal.

In Section \ref{S:decide} we collect together some previous results to make explicit the connections between the various kinds of separation subclasses and the various ways a class can be elementary relative to its superclass (Proposition \ref{P:LclassesD}). We also make some simple observations regarding decision problems and complexity (Propositions \ref{P:semi} and \ref{P:CclassesD}). 

Finally, in Section \ref{S:Apps} we present some applications of the general theory we have developed. First we show how the work in \cite{HirMcL17} on disjoint union partial algebras fits into the framework of separation subclasses, and how this automatically proves some of the results of that paper  (Section \ref{S:dupa}). Following this we consider graph colourings. In particular, in Section \ref{S:N-colour}, from the fact that the class of $N$-colourable graphs has a simple description as a separation subclass of the class of all graphs, we are able to find easy proofs of several model theoretic results relating to these structures. We present new proofs of the known results that, for all $N\geq 2$, the class of $N$-colourable graphs has a universal Horn axiomatisation, but is not finitely axiomatisable, and also that, when $N\geq 3$, the class of graphs with chromatic number $N$ is not elementary. The proofs follow directly from the general results on separation subclasses. In this sense, $N$-colourable graphs provide a good example of a class where a characterisation as a separation subclass is obvious, but where results relating to first-order axiomatisability are not so obvious. 

In Sections \ref{S:clique} and \ref{S:harm} we describe the classes of graphs with $N$-clique covers, and harmonious $N$-colourings, respectively, as separation subclasses. Thus, as an immediate consequence, we can show that both classes have recursive universal axiomatisations relative to the class of all graphs. Moreover, our method proves that the class of graphs with harmonious $N$-colourings is actually finitely axiomatisable for each $N\geq 1$.

\section{Separation subclasses}\label{S:sep}
We adopt the convention that indexing sets are denoted by capital letters, and arbitrary indices taken from these sets use the corresponding lowercase letters. When dealing with a set $\{x_1,\ldots,x_N\}$, we will use $x_n$ to denote an arbitrary element from this set. For variable symbols, we adopt the convention that e.g. $\vec{x}_N$ denotes the set $\{x_1,\ldots,x_N\}$. Given a first-order formula $\phi$ and a set of variable symbols $\vec{x}_N$, it is common to write something like $\phi(\vec{x}_N)$ to denote that the free variables of $\phi$ are precisely $\vec{x}_N$. In this paper we use a relaxed version of this convention. Here we will write e.g. $\phi(\vec{x}_N)$ to denote that the free variables of $\phi$ are from among $\vec{x}_N$, but are not necessarily all of them. We will write things like $\forall \vec{x}_N \phi(\vec{x}_N)$ to stand for $\forall x_1\ldots x_N \phi(\vec{x}_N)$. Given a set of variables $X$ we may also write e.g. $\phi(X)$ to denote that the free variables of $\phi$ are from $X$.

We will also use the $\vec{\phantom{x}}$ notation in a different but closely related way as follows. If e.g. $b_1,\ldots,b_N$ are not necessarily distinct elements of some $\sL$-structure $\cB$, we will write $\phi(\vec{b}_N)$ to mean $\phi(\vec{x}_N)$ where each free $x_n$ is interpreted as $b_n$ for all $n\in\{1,\ldots,N\}$. We will sometimes write something like $\vec{b}_N\in \cB$ to denote a sequence $b_1,\ldots,b_N$ of elements of a structure $\cB$.

Before diving further into the technicalities, we will try to build up some intuition for what is to be done. The situation to be captured is as follows. We have some kind of a structure, e.g. a poset $P$, and we want to say that given some elements of this structure collectively meeting some condition that is definable in first-order logic, e.g. $p,q\in P$ with $p\not\leq q$, there are subsets of the original structure which can be specified to either contain or not contain the elements in question. For, example given $p\not\leq q\in P$ we may want to demand that there is a prime filter of $P$ containing $p$ and not $q$. Moreover, these sets have to satisfy first-order closure conditions. For example, prime filters must be closed upwards, among other things. It is convenient to represent these sets using new monadic predicate symbols. We will ultimately want to existentially quantify over these predicate symbols, so we consider them to be second-order variables. If $C$ is the monadic predicate standing for the prime filter in our poset example, then upward closure can be expressed using the sentence 
\[\tag{$\dagger$} \forall x\forall y\big((x\leq y \wedge C(x)) \ra C(y)\big).\]
The idea is that the relationship between elements, in this case picked out by the variables $x$ and $y$, puts some constraint on their containment or otherwise in the set, in this case picked out by the second-order variable $C$. It will be useful to rewrite this so that the purely first-order part, i.e. the part not involving $C$, is the antecedent of an implication, with the part involving $C$ being the consequent. In other words, to clearly reflect the fact that the first-order relationship between the elements places constraints on the placements of the variables in the sets represented by the second-order variables. These constraints on set containment can be expressed by Boolean combinations of clauses demanding that particular elements either are or are not included in a particular set. From $(\dagger)$, for example, we can obtain the equivalent formula 
\[\tag{$\ddagger$}\forall x\forall y\big(x\leq y \ra (\neg C(x) \vee C(y))\big).\]    
What we call a \emph{closure rule}, and define explicitly in Definition \ref{D:cRule} shortly, is just a conjunction of constraints of this form for some given sets. For example, we describe the closure properties of prime filters by conjoining $(\ddagger)$ with an infinite number of other sentences of similar form (see Example \ref{E:poset1} below for the details).

In  Definition \ref{D:Srule} we will formally introduce what we call a \emph{separation rule}. The intuition here is that a separation rule expresses that if a set of elements from the structure meet some first-order definable condition, then there exist some subsets of the structure such that 1) there are constraints on the containment of the original elements in these sets, and 2) these sets satisfy a closure rule. Given a poset $P$, for example, if given a monadic predicate $C$ we use $C[P]$ to denote $\{p\in P:C(p)\}$, the idea that given $p\not\leq q\in P$ there is a prime filter containing $p$ and not $q$ can be expressed as a separation rule as follows
\[\forall p, q\big(p\not\leq q \ra \exists C(C(p)\wedge \neg C(q) \wedge \text{ `closure rule expressing that $C[P]$ is a prime filter'})\big).\]

\begin{defn}\label{D:cRule}
Let $\sL$ be a first-order signature, let $1\leq K\in\omega$, and let $C_1,\ldots,C_K$ be unary predicate symbols not appearing in $\sL$.  Define $\sL^+_{\vec{C}_K}=\sL\cup\{C_1,\ldots,C_K\}$. A $\vec{C}_K$-\textbf{closure rule} is a conjunction $\bw_I \tau_i$ for some $I\neq\emptyset$, where for each $i\in I$, the formula $\tau_i$ is an $\sL^+_{\vec{C}_K}$-sentence of form  
\[\forall \vec{y}_{M_i}\Big(\gamma(\vec{y}_{M_i})\rightarrow \psi(\vec{y}_{M_i})\Big),\]
where $\gamma$ is a first-order $\sL$-formula with free variables taken from $\vec{y}_{M_i}=\{y_1,\ldots,y_{M_i}\}$, and $\psi$ is a quantifier-free first-order $\sL^+_{\vec{C}_K}$-formula whose free variables are also taken from $\{y_1,\ldots,y_{M_i}\}$. Note that $I$ may be infinite. 
\end{defn}

\begin{defn}\label{D:Srule}
A \textbf{separation rule} for a first-order signature $\sL$ is anything falling into either of the following two categories:
\begin{enumerate}[(1)] 
\item $\sL$-sentences.
\item Monadic second-order sentences of form 
\[\forall\vec{x}_N\Big(\mu(\vec{x}_N)\rightarrow \exists \vec{C}_K(\eta(\vec{x}_N)\wedge \tau)\Big),\] 
where $1\leq K<\omega$, where $\mu$ is an $\sL$-formula with free variables taken from $\vec{x}_N = \{x_1,\ldots,x_N\}$, where $\eta$ is a quantifier-free $\sL^+_{\vec{C}_K}$-formula whose free variables are also taken from $\vec{x}_N$, and where $\tau$ is either a $\vec{C}_K$-closure rule or the tautology $\top$. 

So, in the case where $\tau\neq\top$, a separation rule has form 
\[\forall\vec{x}_N\Big(\mu(\vec{x}_N)\rightarrow \exists \vec{C}_K\big(\eta(\vec{x}_N)\wedge \bw_I\forall \vec{y}_{M_i}(\gamma_i(\vec{y}_{M_i})\rightarrow \psi_i(\vec{y}_{M_i}))\big)\Big)\] 
for some set $I\neq\emptyset$ (which may be infinite). 
\end{enumerate} 

The \textbf{order} of a separation rule of type (1) is said to be zero, and the order of a separation rule of type (2) is the value of $K$ used in its definition.

A separation rule $\sigma$ is said to be \emph{finite} if it either has order zero, or if the order is positive with $\sigma = \forall\vec{x}_N\Big(\mu(\vec{x}_N)\rightarrow \exists \vec{C}_K(\eta(\vec{x}_N)\wedge \bw_I\tau_i)\Big)$ and $I$ is finite. A separation rule that is not finite is said to be \emph{infinite}. Similarly, $\sigma$ is \emph{countable} if it is either finite or $I$ is countable. Finally, $\sigma$ is said to be \emph{recursively enumerable} (r.e.) if it is either finite or there is an algorithm for listing the formulas $\tau_i$. In other words, while infinite separation rules cannot be be written out in full, if they are r.e. they can at least be approximated to arbitrary precision by including more of the infinite conjunction $\bw_I\tau_i$.

 A set $\Sigma$ of separation rules is called a \textbf{separation scheme}. Informally, a separation scheme is just a set of constraints on a structure that can be expressed as separation rules. A separation scheme $\Sigma$ is said to be \textbf{essentially finite} if $\Sigma$ is finite and each $\sigma\in \Sigma$ is also finite. Similarly, $\Sigma$ is \textbf{essentially countable} if it is countable and each $\sigma\in \Sigma$ is countable. A separation scheme $\Sigma$ is said to be \textbf{essentially recursively enumerable}  if $\Sigma = \{\sigma_n:n\in \omega\}$, and $\sigma_n$ is r.e. for each $n\in \omega$. 
\end{defn}

\begin{defn}[$\Sigma^{>0}$]
If $\Sigma$ is a separation scheme, we use $\Sigma^{>0}$ to denote the subset of $\Sigma$ containing all the separation rules of order strictly greater than zero (i.e. all those of type (2)).
\end{defn}

\begin{ex}\label{E:poset1}
As mentioned previously, there are several non-equivalent ways to generalise the concept of a prime filter from lattices to posets. For us, a `prime filter' of a poset $P$ is a subset $\Gamma$ of $P$ that is closed upwards, closed under finite existing meets (greatest lower bounds), and has the `primality' property that if the join (least upper bound) of a finite set $S$ of elements of $P$ is defined in $P$ and is in $\Gamma$, then $\Gamma\cap S\neq \emptyset$. We will in future refer to a subset of a poset satisfying these closure properties as an \textbf{$\omega$-filter}, to avoid ambiguity. We want to phrase the condition that, given a poset $P$, if $p\not\leq q\in P$, then there is an $\omega$-filter of $P$ containing $p$ but not $q$ as a separation rule. We proceed as follows.
    
Let $\sL = \{\leq\}$ be the signature of ordered sets. For each $M\geq 1$ let $\vec{y}_M = \{y_1,\ldots,y_M\}$ be a set of variable symbols, and, with a new variable symbol $z$, define $\mathsf{j}_M(\vec{y}_M,z)$ and $\mathsf{m}_M(\vec{y}_M,z)$ to be the universal $\sL$-formulas stating that $z$ is the least upper bound (join) and greatest lower bound (meet) of the elements of $\vec{y}_M$ respectively.  

We define a closure rule to capture the closure properties of $\omega$-filters. To do this we introduce a unary predicate $C$ meant to represent an $\omega$-filter. First we define a clause $\tau_0$ meant to capture upward closure: 
\[\tau_0 = \forall yz \big(y\leq z \ra (C(y)\ra C(z)) \big).\]
 
Now we define a clause $\tau_i$ for each $1\leq i<\omega$ meant to capture closure under finite meets and the `primality' property. We will use even values of $i$ to capture closure under meets of the various finite cardinalities, and we will use odd values to capture the `primality' properties. This division is purely an accounting device, but it is convenient.

If $i = 2M$ for some $M\in\omega$, then
\[\tau_i = \forall \vec{y}_Mz\Big(\meet_M(\vec{y}_M,z) \ra \big (\bw_{m= 1}^M C(y_m)\ra C(z) \big) \Big),\]
 and, if $i = 2M -1$ for some $M\in\omega$, then
\[\tau_i = \forall \vec{y}_Mz\Big(\join_M(\vec{y}_M,z) \ra \big (C(z)\ra \bv_{m= 1}^M C(y_m)) \big) \Big).\]
 The closure rule for $\omega$-filters is then the conjunction $\tau = \bw_{i\in\omega} \tau_i$. We can now define our separation rule $\sigma$ as follows:
\[\sigma = \forall pq \Big(p\not\leq q \ra \exists C \big(C(p)\wedge \neg C(q) \wedge \bw_{i\in\omega} \tau_i \big)\Big).\]
Then $\sigma$ is easily seen to be a r.e. separation rule, and a poset $P$ satisfies $\sigma$ if and only if whenever $p\not\leq q\in P$ there is an $\omega$-filter of $P$ containing $p$ but not $q$, as required. Note that $\sigma$ is not finite as we need $\tau_i$ for all $i\in\omega$.  
\end{ex}

\begin{defn}\label{D:sep}
Let $\sL$ be a first-order signature, let $\cA$ be a class of $\sL$-structures, and let $\cB$ be a subclass of $\cA$. Then $\cB$ is a \textbf{separation subclass} of $\cA$ if there is a separation scheme $\Sigma$ such that $\cB=\{A\in\cA: A\models \Sigma\}$. Here $\models$ is defined using the standard semantics for second-order logic. A separation subclass is said to be \textbf{essentially r.e./countable/finite} when it can be defined using a separation scheme with the corresponding property. If $\cA$ is the class of all $\sL$-structures then we say $\cB$ is a \textbf{separation class}.   
\end{defn}

\begin{ex}\label{E:poset2}
We say a poset is $P$ is \textbf{representable} if there is a set $X$ and an order embedding $h:P\to \wp(X)$ such that $h$ preserves finite meets and joins from $P$ whenever they exist (here $\wp(X)$ is considered as a lattice with operations $\cup$ and $\cap$). It is easy to prove that a poset $P$ is representable if and only if whenever $p\not\leq q\in P$ there is an $\omega$-filter of $P$ containing $p$ and not $q$ (see, for example, \cite[Theorem 2.4]{Egr19}). Thus, building on Example \ref{E:poset1}, we see that the class of representable posets is an essentially r.e. separation subclass of the class of posets, using the separation scheme $\Sigma = \{\sigma\}$. Note that $\Sigma$ is essentially r.e. but not essentially finite, as while it contains only a single separation rule, this separation rule is infinite.

Generalising, given any $2\leq \alpha,\beta\leq\omega$ we say a poset $P$ is $(\alpha,\beta)$-representable if there is a set $X$ and an order embedding $h:P\to \wp(X)$ such that $h$ preserves meets of cardinality strictly less than $\alpha$, and joins of cardinality strictly less than $\beta$. Adapting the previous argument we can show the class of $(\alpha,\beta)$-representable posets is an essentially r.e. separation subclass of the class of all posets, and is essentially finite when $\alpha,\beta<\omega$.
\end{ex}

As may be expected given the definitions, the machinery of separation subclasses is not weaker than the machinery of first-order logic when it comes to specifying subclasses of classes of $\sL$-structures. We make this precise in the following proposition. More surprisingly, it turns out that is not stronger either. This is the result of Theorem \ref{T:same}.  

\begin{prop}\label{P:AxSep}
If $\cA$ is a class of $\sL$-structures, and if $\cB\subseteq \cA$ is elementary relative to $\cA$, then $\cB$ is a separation subclass of $\cA$. Moreover, if the axiomatisation of $\cB$ relative to $\cA$ is finite/countable/r.e., then $\cB$ is an essentially finite/countable/r.e. separation subclass of $\cA$. 
\end{prop}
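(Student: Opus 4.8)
The plan is to show that any first-order sentence axiomatising $\cB$ relative to $\cA$ can be recast as a separation rule, so that a first-order axiomatisation becomes a separation scheme. The key observation is that a separation rule of type (1) is simply an arbitrary $\sL$-sentence. So the most direct approach is almost trivial: if $\cB = \{A\in\cA : A\models T\}$ for some first-order theory $T$, then we set $\Sigma = T$, regarding each $\sL$-sentence in $T$ as a separation rule of type (1) (order zero). By Definition \ref{D:sep} we immediately get $\cB = \{A\in\cA : A\models\Sigma\}$, so $\cB$ is a separation subclass of $\cA$.

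For the refinement concerning cardinality and recursiveness, I would check that the correspondence $T\leftrightarrow\Sigma$ preserves the relevant properties. A separation rule of order zero is automatically \emph{finite} in the sense of Definition \ref{D:Srule} (the conditions on $I$ are vacuous since there is no conjunction $\bw_I\tau_i$ to worry about), hence also countable and r.e. Therefore each $\sigma\in\Sigma$ individually satisfies all three smallness conditions regardless of $T$, and the only thing controlling the type of $\Sigma$ is the size and recursiveness of $T$ itself. Concretely: if $T$ is finite then $\Sigma$ is finite with every member finite, so $\cB$ is an essentially finite separation subclass; if $T$ is countable then $\Sigma$ is countable with every member countable, giving essentially countable; and if $T$ is r.e. then we may enumerate $\Sigma = \{\sigma_n : n\in\omega\}$ using an enumeration of $T$, and since each $\sigma_n$ is finite it is trivially r.e., so $\cB$ is an essentially r.e. separation subclass.

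There is essentially no obstacle here, which is the point worth flagging: the type-(1) clause in Definition \ref{D:Srule} was built precisely so that ordinary first-order sentences count as (degenerate) separation rules, and the matching smallness conditions in Definition \ref{D:Srule} were chosen so that order-zero rules always qualify as finite. The only thing requiring a moment's care is the bookkeeping in the r.e. case, where one must confirm that "r.e. separation scheme" as defined (an $\omega$-indexed family of individually r.e. rules) is exactly what an r.e. first-order theory supplies. Since the genuinely second-order machinery plays no role whatsoever in this direction, the proof is short; all the content of the equivalence between separation subclasses and elementary subclasses lies in the converse, Theorem \ref{T:same}.
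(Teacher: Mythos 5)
Your proposal is correct and is exactly the paper's argument: the paper's proof is the one-line observation that separation rules of order zero are just $\sL$-sentences, so the axiomatisation itself serves as the separation scheme. Your additional bookkeeping for the finite/countable/r.e. cases is sound and merely makes explicit what the paper leaves implicit.
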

\begin{proof}
Since separation rules of order zero are just $\sL$-sentences, this is an immediate consequence of Definition \ref{D:sep}.
\end{proof}

It will be useful to slightly generalise the familiar notion of a pseudoelementary class.

\begin{defn}
Let $\cA$ be a class of $\sL$-structures, and let $\cB\subseteq\cA$. Then $\cB$ is \textbf{pseudoelementary relative to} $\cA$ if there is an extension $\sL'$ of $\sL$, and an $\sL'$-theory $T$ such that 
\[\cB=\{A\in \cA : \text{we can interpret the additional symbols of } \sL' \text{ so that } A\models T\}.\]   
\end{defn}

If $\cA$ is the class of all $\sL$-structures, then being pseudoelementary relative to $\cA$ is the same as being pseudoelementary as it is usually defined. We say a pseudoelementary class $\cB$ is \emph{essentially finite/countable/r.e.} relative to $\cA$ if $\sL'$ and $T$ are both finite/countable/r.e.\/ Classes that are essentially finite as pseudoelementary classes relative to the class of all $\sL$-structures are often referred to as being \textbf{basic pseudoelementary}. 

\begin{lemma}\label{L:pseud}
If $\cB$ is a separation subclass of $\cA$, then $\cB$ is pseudoelementary relative to $\cA$. Moreover, if $\cB$ is essentially finite/countable/r.e. as a separation subclass of $\cA$, then $\cB$ is essentially finite/countable/r.e. pseudoelementary relative to $\cA$.
\end{lemma}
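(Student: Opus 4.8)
The plan is to eliminate the existential second-order quantifiers appearing in the order-positive separation rules by Skolemising them into fresh first-order predicate symbols of higher arity, and then to check that the resulting first-order theory witnesses pseudoelementarity while respecting the relevant finiteness/countability/recursiveness bounds.

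First I would fix a separation scheme $\Sigma$ with $\cB = \{A \in \cA : A \models \Sigma\}$. The order-zero rules are already $\sL$-sentences and can be placed directly into the theory. For each order-positive rule
\[\sigma = \forall \vec{x}_N\Big(\mu(\vec{x}_N) \ra \exists \vec{C}_K\big(\eta(\vec{x}_N) \wedge \bw_I \tau_i\big)\Big) \in \Sigma^{>0},\]
the key observation is that the witness sets named by $C_1,\ldots,C_K$ depend on the parameters $\vec{x}_N$. To absorb this dependence I would introduce, for each such $\sigma$ and each $k\in\{1,\ldots,K\}$, a fresh $(N+1)$-ary predicate symbol $D_k^\sigma$, intended so that $D_k^\sigma(\vec{x}_N,y)$ records that $y$ lies in the witness chosen for $C_k$ relative to the parameters $\vec{x}_N$. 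Let $\sL'$ be $\sL$ together with all these new symbols. Writing $\eta',\psi_i'$ for the $\sL'$-formulas obtained from $\eta,\psi_i$ by replacing every atom $C_k(t)$ by $D_k^\sigma(\vec{x}_N,t)$, I would set $T_\sigma$ to be the set of first-order $\sL'$-sentences
\[\forall \vec{x}_N\big(\mu \ra \eta'\big), \qquad \forall \vec{x}_N\forall \vec{y}_{M_i}\big((\mu \wedge \gamma_i) \ra \psi_i'\big) \quad (i \in I),\]
and let $T$ be the union of all $T_\sigma$ together with the order-zero rules.

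Next I would verify that $\cB = \{A \in \cA : \text{the } D_k^\sigma \text{ can be interpreted so that } A \models T\}$. For the forward inclusion, given $A \in \cB$ and $\sigma\in\Sigma^{>0}$, for each parameter tuple $\vec{a}_N$ satisfying $\mu$ I would (invoking choice) fix witness sets and interpret $D_k^\sigma(\vec{a}_N,\cdot)$ as the characteristic relation of the $k$-th witness, interpreting $D_k^\sigma$ arbitrarily on tuples failing $\mu$; the standard distribution of $\forall$ and $\wedge$ over the implication then yields $A \models T_\sigma$. For the converse, given an interpretation of the $D_k^\sigma$ with $A \models T$, for each $\sigma$ and each $\vec{a}_N$ satisfying $\mu$ I would read back the sets $\{b : D_k^\sigma(\vec{a}_N,b)\}$ as witnesses for $\exists\vec{C}_K$, recovering $A \models \sigma$. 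Since the order-zero rules transfer trivially, this gives $A\models\Sigma$, i.e. $A\in\cB$.

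Finally I would track the bounds. Each order-positive rule contributes finitely many new predicates (its order $K$), so $\sL'$ inherits the cardinality profile of $\Sigma$, while $T_\sigma$ has cardinality $1+|I|$; hence essential finiteness and essential countability of $\Sigma$ pass to $\sL'$ and $T$ directly. The essentially r.e.\ case is the only one needing care: here I would dovetail the listing algorithms guaranteed for the rules $\sigma_n$, reading off each prefix $(\mu_n,\eta_n,K_n,N_n)$ and each clause $\tau_{n,i}$ as it is produced, mechanically performing the substitutions to output the members of $T_{\sigma_n}$, thereby obtaining a single algorithm that enumerates $\sL'$ and $T$. I expect the Skolemisation step---recognising that the parameter dependence of the second-order witnesses is captured exactly by raising the arity of the predicates---to be the conceptual crux, with the recursive bookkeeping being the only place where genuine, if routine, attention is required.
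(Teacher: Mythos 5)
Your proposal is correct and follows essentially the same route as the paper: both Skolemise each second-order witness $C_k$ into a fresh $(N+1)$-ary predicate whose first $N$ arguments carry the parameters $\vec{x}_N$, relativise the resulting first-order sentences to $\mu$, and verify the two inclusions by reading the witness sets off the new relations (the paper's $t_\sigma^i$ uses the nested form $\mu \ra (\gamma_i \ra \hat\psi_i)$ where you use $(\mu\wedge\gamma_i)\ra\psi_i'$, which is equivalent). The bookkeeping for the finite/countable/r.e.\ cases matches the paper's as well.
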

\begin{proof}
Let $\cA$ be a class of $\sL$-structures, let $\Sigma$ be a separation scheme defining $\cB$ relative to $\cA$, and let $\sigma\in\Sigma$ have order $K$ for some $K>0$ (as there is nothing to do in the case where $K=0$). So 
\[\sigma = \forall\vec{x}_N\Big(\mu_\sigma(\vec{x}_N)\rightarrow \exists \vec{C}_K\big(\eta_\sigma(\vec{x}_N)\wedge \tau_\sigma \big)\Big),\]
where we either have $\tau_\sigma = \top$ or 
\[\tau_\sigma = \bw_I \tau_\sigma^i.\]
Moreover, assuming $\tau_\sigma\neq \top$, for each $i\in  I$, the formula $\tau_\sigma^i$ is given by
\[\tau_\sigma^i = \forall \vec{y}_{M_i}(\gamma_\sigma^i(\vec{y}_{M_i})\rightarrow \psi_\sigma^i(\vec{y}_{M_i})).\]
Expand $\sL$ to a new signature $\sL'_\sigma$ by adding new $(n+1)$-ary predicate symbols $R_1,\ldots,R_K$. Define
\[t_\sigma = \forall \vec{x}_N(\mu_\sigma(\vec{x}_N)\rightarrow \hat{\eta}_\sigma(\vec{x}_N)),\]
where $\hat{\eta}_\sigma$ is $\eta_\sigma$ but with every occurrence of $C_k(-)$ replaced by $R_k(\vec{x}_N,-)$, for all $k\in\{1,\ldots,K\}$.

Now, assuming $\tau_\sigma\neq \top$, let $i\in I$, and define
\[t_\sigma^{i}= \forall \vec{x}_N\Big(\mu_\sigma(\vec{x}_N)\rightarrow \forall \vec{y}_{M_i}\big(\gamma_\sigma^i(\vec{y}_{M_i})\rightarrow \hat{\psi}_\sigma^i(\vec{y}_{M_i})\big)\Big),\]
where $\hat{\psi}_\sigma^i$ is defined by replacing occurrences of $C_k(-)$ in $\psi_\sigma^i$ with $R_k(\vec{x}_N,-)$ for all $k\in\{1,\ldots,K\}$.

For $\sigma\in\Sigma^{>0}$, define $T_\sigma=\{t_\sigma\}\cup \{t_\sigma^i:i\in I\}$. If the order of $\sigma$ is 0 then $\sigma$ is already an $\sL$-sentence, so we define $T_\sigma =\{\sigma\}$ in this case. Define 
\[T=\bigcup_{\sigma\in \Sigma} T_\sigma.\]
Then $T$ is a theory for the expanded signature $\sL' = \bigcup_{\sigma\in\Sigma} \sL'_\sigma$. We assume that if $\sigma_1\neq\sigma_2$ then the extra symbols added to $\sL'_{\sigma_1}$ and  $\sL'_{\sigma_2}$ are all distinct. Define
\[\cB' = \{A\in \cA : \text{we can interpret the additional symbols of } \sL' \text{ so that } A\models T\}.\]
Let $B\in \cB'$, and let $\sigma\in \Sigma^{>0}$ have order $K$. Then we can interpret the additional symbols of $\sL'$ in $B$ so that $B\models T_\sigma$. In particular, if $\vec{b}_N\in B$ is such that $B\models \mu_\sigma(\vec{b}_N)$, then $B\models \hat{\eta}_\sigma(\vec{b}_N)$, and, assuming that $\tau_\sigma\neq \top$ and given $i\in I$, we also have $B\models \forall \vec{y}_{M_i}\big(\gamma_\sigma^i(\vec{y}_{M_i})\rightarrow \hat{\psi}_\sigma^i(\vec{y}_{M_i})\big)$. So, if $\sL'_\sigma = \sL\cup\{R_1,\ldots,R_K\}$, then for each $k\in \{1,\ldots,K\}$, whenever $B\models \mu_\sigma(\vec{b}_N)$  we can interpret $C_k$ by 
\[C_k(a) \iff R_k(\vec{b}_N,a),\]
and so a routine argument reveals that $B\models \sigma$. There is nothing to do for the case where $\sigma$ has order 0, and so it follows that $\cB'\subseteq \cB$.  

Conversely, if $B\in \cB$ then we can make $B$ into an $\sL'$-structure by interpreting the new relations as follows. If $R$ is one such new relation, then it is associated with a unary predicate symbol $C$ appearing in some separation rule
\[\forall\vec{x}_N\Big(\mu(\vec{x}_N)\rightarrow \exists \vec{C}_K\big(\eta(\vec{x}_N)\wedge \tau \big)\Big).\] 
Let $\vec{b}_N\in B$ and suppose $B\models \mu(\vec{b}_N)$. Then there is an associated instantiation of $C$ in $B$ which we denote $C_{\vec{b}_N}$. Now, define the interpretation of $R$ in $B$ using
\[R = \{(\vec{b}_N,a): B\models \mu(\vec{b}_N)\wedge C_{\vec{b}_N}(a)\}.\]
Then another routine argument reveals that $B\models T$, and thus $\cB\subseteq \cB'$. So $\cB=\cB'$, and $\cB$ is pseudoelementary relative to $\cA$ as required. 

If $\cB$ is essentially finite/countable/r.e. as a separation subclass of $\cA$, then that $\cB$ is essentially finite/countable pseudoelementary/r.e. relative to $\cA$ follows immediately from the construction of $\sL'$ and $T$. 
\end{proof}

\begin{cor}\label{C:pseud}
If $\cB$ is a separation class then $\cB$ is pseudoelementary.
\end{cor}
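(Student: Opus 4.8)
The plan is to observe that Corollary \ref{C:pseud} is nothing more than the special case of Lemma \ref{L:pseud} in which the ambient class $\cA$ is taken to be the class of all $\sL$-structures. By Definition \ref{D:sep}, saying that $\cB$ is a separation class means precisely that $\cB$ is a separation subclass of $\cA$, where $\cA$ denotes the class of all $\sL$-structures. Thus the hypothesis of Lemma \ref{L:pseud} is met with this choice of $\cA$.

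I would then apply Lemma \ref{L:pseud} directly to conclude that $\cB$ is pseudoelementary relative to $\cA$. The only remaining point is to translate ``pseudoelementary relative to $\cA$'' into ``pseudoelementary'' in the ordinary sense. This is exactly the remark made immediately after the definition of pseudoelementary relative to $\cA$: when $\cA$ is the class of all $\sL$-structures, the relativised notion coincides with the usual one, since the clause ``$A \in \cA$'' in the defining set imposes no restriction beyond being an $\sL$-structure. Hence $\cB$ is pseudoelementary as usually defined.

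There is no real obstacle here; the entire content of the corollary has already been established in Lemma \ref{L:pseud}, and the corollary merely records the unrelativised consequence. The proof is therefore a single sentence invoking Lemma \ref{L:pseud} together with the observation about the coincidence of the two notions of pseudoelementarity. If one wished to be slightly more explicit, one could note that the signature extension $\sL'$ and theory $T$ produced in the proof of Lemma \ref{L:pseud} serve verbatim as a witness to $\cB$ being pseudoelementary, but this is not necessary.
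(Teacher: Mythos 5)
Your proof is correct and follows the same route as the paper: the paper's own proof simply cites Lemma \ref{L:pseud} together with Definition \ref{D:sep}, exactly as you do, with the observation that pseudoelementarity relative to the class of all $\sL$-structures coincides with ordinary pseudoelementarity. Nothing is missing.
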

\begin{proof}
This follows immediately from Lemma \ref{L:pseud} and the definition of separation classes (Definition \ref{D:sep}).
\end{proof}

Converses to Lemma \ref{L:pseud} and Corollary \ref{C:pseud} do not hold in general. To see this, note that we shall show that separation classes are elementary (Theorem \ref{T:same}), while pseudoelementary classes may not be.

The following lemma is a mild generalisation of the well known fact that pseudoelementary classes are closed under ultraproducts.
\begin{lemma}\label{L:ultra}
If $\cA$ is closed under ultraproducts and $\cB$ is pseudoelementary relative to $\cA$, then $\cB$ is closed under ultraproducts.
\end{lemma}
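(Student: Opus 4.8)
The plan is to reduce the statement to the classical theorem that pseudoelementary classes (in the usual, absolute sense) are closed under ultraproducts, applied in the expanded signature. Since $\cB$ is pseudoelementary relative to $\cA$, fix an extension $\sL'\supseteq\sL$ and an $\sL'$-theory $T$ witnessing this, so that for any $A\in\cA$ we have $A\in\cB$ if and only if the $\sL$-reduct of $A$ can be expanded to an $\sL'$-structure modelling $T$. The goal is to show that given structures $\{B_j : j\in J\}\subseteq\cB$ and an ultrafilter $U$ on $J$, the ultraproduct $\prod_U B_j$ lies in $\cB$.

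First I would use the hypothesis that $\cA$ is closed under ultraproducts: since each $B_j\in\cB\subseteq\cA$, the reduct ultraproduct $B:=\prod_U B_j$ (formed in the signature $\sL$) belongs to $\cA$. It therefore suffices to produce an $\sL'$-expansion of $B$ satisfying $T$. To do this, for each $j\in J$ choose (using membership in $\cB$) an $\sL'$-expansion $B_j'$ of $B_j$ with $B_j'\models T$. Now form the ultraproduct $B':=\prod_U B_j'$ in the full signature $\sL'$. The key observation is that the $\sL$-reduct of $B'$ is exactly $B$, because reducts commute with ultraproducts: the underlying set and the interpretations of the $\sL$-symbols in $\prod_U B_j'$ are computed coordinatewise and modulo $U$ in precisely the same way as for $\prod_U B_j$. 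Thus $B'$ is an $\sL'$-expansion of $B$, and by \L o\'s's theorem $B'\models T$ since each $B_j'\models T$. Hence $B$ admits an expansion modelling $T$, and as $B\in\cA$ this gives $B\in\cB$.

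The only point requiring care is the verification that $\sL$-reducts commute with ultraproducts, i.e. that $(\prod_U B_j')\!\restriction_{\sL}=\prod_U (B_j'\!\restriction_{\sL})=\prod_U B_j$; this is a routine unwinding of the definitions of reduct and ultraproduct, but it is the load-bearing step, since everything else is an immediate application of \L o\'s's theorem and the closure hypothesis on $\cA$. I expect no genuine obstacle: the relativisation to $\cA$ contributes nothing beyond the extra sentence ``$B\in\cA$,'' which is handed to us directly by the assumption that $\cA$ is closed under ultraproducts. The essentially finite/countable/r.e.\ refinements play no role here, since cardinality or recursiveness of $T$ does not affect the ultraproduct argument.
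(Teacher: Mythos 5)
Your proof is correct and is essentially identical to the paper's: both expand each factor to an $\sL'$-model of $T$, apply \L o\'s's theorem to the ultraproduct in the expanded signature, note that the $\sL$-reduct is the original ultraproduct, and use closure of $\cA$ under ultraproducts to conclude membership in $\cB$. Your version merely makes explicit the (routine) commutation of reducts with ultraproducts, which the paper leaves implicit.
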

\begin{proof}
Suppose $T$ is an $\sL'$-theory making $\cB$ pseudoelementary relative to $\cA$. Let $I$ be an indexing set and for each $i\in I$ let $B_i\in\cB$. Let $\prod_U B_i$ be an ultraproduct. For every $i$ we can define an $\sL'$-structure on $B_i$, which we denote $B'_i$, such that $B'_i\models T$. Then $\prod_U B'_i\models T$, by \L o\'s's theorem \cite{Los55}, and as $\prod_U B_i \in \cA$ it follows that $\prod_U B_i \in \cB$.
\end{proof}

The aim now is to show that separation subclasses of elementary classes are elementary. In view of Lemmas \ref{L:pseud} and \ref{L:ultra} it will be sufficient to prove they are closed under taking elementary substructures and appeal to \cite[Theorem 2.13]{FMS62}. This is done by the following pair of technical lemmas. 

\begin{lemma}\label{L:subs}
Let $\sL$ be a first-order signature, let $A$ be an $\sL$-structure and let $B$ be an elementary substructure of $A$. Let $a_1,\ldots,a_N$ be elements of $B$, and let $S_1,\ldots, S_K$ be unary predicate symbols not appearing in $\sL$. Define $\sL^+=\sL\cup\{S_1,\ldots,S_K\}$. Let $\eta(\vec{z}_N)$ be a quantifier-free first-order $\sL^+$-formula with free variables from $\vec{z}_N$. For each $k\in\{1,\ldots,K\}$ let $X_k\subseteq A$, and use these sets to make $A$ into an $\sL^+$-structure by interpreting $S_k$ as $X_k$ for each $k$. Similarly, make $B$ into an $\sL^+$-structure by interpreting $S_k$ as $X_k\cap B$ for each $k$. Then $A\models \eta(\vec{a}_N)\iff B\models \eta(\vec{a}_N)$.  
\end{lemma}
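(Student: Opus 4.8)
The plan is to exploit the fact that $\eta$ is quantifier-free, so that its truth value at $\vec{a}_N$ is determined entirely by the truth values of its atomic subformulas, and then to verify agreement between $A$ and $B$ atom by atom. First I would reduce to the atomic case by induction on the structure of $\eta$: the Boolean connectives $\neg$, $\wedge$, $\vee$ (and any derived connectives) commute with the biconditional, so once the base case is settled the inductive step is immediate. There are exactly two kinds of atomic $\sL^+$-formula with free variables among $\vec{z}_N$. The first are pure $\sL$-atomic formulas, built only from $\sL$-terms together with $\sL$-relation symbols or equality, containing no occurrence of any $S_k$. The second have the shape $S_k(t)$ for some $\sL$-term $t$ in the variables $\vec{z}_N$. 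Since each $S_k$ is a unary predicate and not a function symbol, no $S_k$ can occur inside a term, so every term appearing in $\eta$ is an $\sL$-term, and these two cases are exhaustive.

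For the first kind, the point is that $B$, being an elementary substructure of $A$, is in particular a substructure. Hence for any $\sL$-term $t$ with variables among $\vec{z}_N$ we have $t^A(\vec{a}_N)=t^B(\vec{a}_N)$, because $a_1,\ldots,a_N\in B$, $B$ is closed under the $\sL$-operations, and the $\sL$-function interpretations on $B$ are the restrictions of those on $A$; likewise the interpretations of the $\sL$-relation symbols on $B$ are the restrictions of those on $A$. A standard induction on terms followed by the atomic case then gives $A\models \eta(\vec{a}_N)\iff B\models \eta(\vec{a}_N)$ whenever $\eta$ is a pure $\sL$-atomic formula. Only the substructure property is used here; the full strength of elementarity is not needed for this lemma.

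The one genuinely new point, and the thing to get right, is the case $\eta = S_k(t)$. Writing $b=t^A(\vec{a}_N)=t^B(\vec{a}_N)$, observe that $b\in B$, since $B$ is a substructure containing the $a_n$ and is closed under the $\sL$-operations. By construction $S_k$ is interpreted as $X_k$ in $A$ and as $X_k\cap B$ in $B$, so $A\models S_k(t)$ under the assignment $z_n\mapsto a_n$ if and only if $b\in X_k$, whereas $B\models S_k(t)$ under the same assignment if and only if $b\in X_k\cap B$. Because $b\in B$ already, we have $b\in X_k\iff b\in X_k\cap B$, so the two sides agree. This is hardly an obstacle so much as the sole piece of content in the argument: the restriction of each $X_k$ to $B$ cannot change membership for elements already lying in $B$. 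Combining the two base cases with the inductive step for the Boolean connectives yields $A\models \eta(\vec{a}_N)\iff B\models \eta(\vec{a}_N)$ in general.
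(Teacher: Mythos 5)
Your proof is correct and follows essentially the same route as the paper: structural induction on the quantifier-free $\eta$, with the only substantive case being the atomic formula $S_k(t)$, resolved by noting that $t^A(\vec{a}_N)=t^B(\vec{a}_N)$ lies in $B$ and hence belongs to $X_k$ if and only if it belongs to $X_k\cap B$. Your additional remark that only the substructure property (not full elementarity) is needed is accurate for the atomic cases, though the paper's appeal to elementarity for the pure $\sL$-part is equally valid.
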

\begin{proof}
We proceed by structural induction on $\eta$. Let $\vec{a}_N\in B$. If $\eta$ is a pure $\sL$-formula, i.e. if it involves none of the additional predicates, then that $A\models \eta(\vec{a}_N)$ if and only if $B\models \eta(\vec{a}_N)$ follows immediately from the assumption that $B$ is an elementary substructure of $A$. So the non-trivial base cases are the atomic formulas of form $S_k(t(\vec{a}_N))$ where $t(\vec{x}_N)$ is an $\sL$-term. As $B$ is a substructure of $A$ we have $t(\vec{a}_N)\in B$, and so 
\[A\models S^A_k(t(\vec{a}_N))\iff t(\vec{a}_N)\in X_k\iff t(\vec{a}_N)\in X_k\cap B \iff B\models S^B_k(t(\vec{a}_N)).\] 
The inductive step is routine because $\eta$ is quantifier-free, and so it suffices to deal with $\neg$ and $\wedge$.
\end{proof}

\begin{lemma}\label{L:Csubs}
If $\cB$ is a separation subclass of $\cA$ and $\cA$ is closed under taking elementary substructures, then $\cB$ is also closed under taking elementary substructures.
\end{lemma}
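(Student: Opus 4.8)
The plan is to show that if $B$ is an elementary substructure of some $A \in \cB$, then $B \models \Sigma$, where $\Sigma$ is a separation scheme defining $\cB$ relative to $\cA$. Since $\cA$ is closed under elementary substructures we already have $B \in \cA$, so the only thing to verify is that $B$ satisfies every separation rule in $\Sigma$. Rules of order zero are plain $\sL$-sentences and transfer down to $B$ immediately by elementarity, so the work is entirely in the order-$K$ rules $\sigma = \forall \vec{x}_N(\mu_\sigma(\vec{x}_N) \to \exists \vec{C}_K(\eta_\sigma(\vec{x}_N) \wedge \tau_\sigma))$ with $K > 0$.

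The key idea is that witnesses for the existential second-order quantifiers in $A$ can be restricted to $B$ to give witnesses in $B$. So first I would fix such a $\sigma$ and a tuple $\vec{b}_N \in B$ with $B \models \mu_\sigma(\vec{b}_N)$. Since $\mu_\sigma$ is a first-order $\sL$-formula and $B \preceq A$, we also have $A \models \mu_\sigma(\vec{b}_N)$. As $A \models \sigma$, there exist subsets $X_1,\ldots,X_K \subseteq A$, interpreting $C_1,\ldots,C_K$, such that $A \models \eta_\sigma(\vec{b}_N) \wedge \tau_\sigma$ under this interpretation. The natural candidate witnesses in $B$ are the restrictions $X_k \cap B$. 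I would then need to check two things: that $B$ (with $C_k$ interpreted as $X_k \cap B$) satisfies $\eta_\sigma(\vec{b}_N)$, and that it satisfies the closure rule $\tau_\sigma$ (the case $\tau_\sigma = \top$ being trivial).

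The first of these, satisfaction of the quantifier-free formula $\eta_\sigma(\vec{b}_N)$, is exactly what Lemma \ref{L:subs} delivers: with $\sL^+ = \sL \cup \{C_1,\ldots,C_K\}$, interpreting $C_k$ as $X_k$ in $A$ and as $X_k \cap B$ in $B$, the lemma gives $A \models \eta_\sigma(\vec{b}_N) \iff B \models \eta_\sigma(\vec{b}_N)$, and the left side holds by choice of the $X_k$. For the closure rule, I would treat each conjunct $\tau_\sigma^i = \forall \vec{y}_{M_i}(\gamma_\sigma^i(\vec{y}_{M_i}) \to \psi_\sigma^i(\vec{y}_{M_i}))$ separately. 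Here I expect the main obstacle to be the universal quantifier over $\vec{y}_{M_i}$: we know $A$ satisfies $\tau_\sigma^i$ for all tuples in $A$, and we want $B$ to satisfy it for all tuples in $B$. The point is that $\gamma_\sigma^i$ is a pure first-order $\sL$-formula and $\psi_\sigma^i$ is quantifier-free, so for any $\vec{c}_{M_i} \in B$ with $B \models \gamma_\sigma^i(\vec{c}_{M_i})$, elementarity gives $A \models \gamma_\sigma^i(\vec{c}_{M_i})$, hence $A \models \psi_\sigma^i(\vec{c}_{M_i})$ since $A$ satisfies $\tau_\sigma^i$, and then Lemma \ref{L:subs} applied to the quantifier-free $\psi_\sigma^i$ transfers this back to $B \models \psi_\sigma^i(\vec{c}_{M_i})$.

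Assembling these pieces, the restricted sets $X_k \cap B$ witness the existential in $\sigma$ for the tuple $\vec{b}_N$, so $B \models \sigma$. Since $\sigma \in \Sigma^{>0}$ and $\vec{b}_N$ were arbitrary, and the order-zero rules transfer trivially, we conclude $B \models \Sigma$ and hence $B \in \cB$. The crux is really the interplay in the closure rule between the first-order antecedent $\gamma_\sigma^i$, which passes up to $A$ by elementarity, and the quantifier-free consequent $\psi_\sigma^i$, which passes back down to $B$ by Lemma \ref{L:subs}; the design of separation rules, with $\gamma_\sigma^i$ constrained to be a pure $\sL$-formula, is exactly what makes this work.
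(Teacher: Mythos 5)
Your proposal is correct and follows essentially the same route as the paper's proof: restrict the second-order witnesses to the substructure, use elementarity to pass the first-order formulas $\mu$ and $\gamma_i$ up to the larger structure, and use Lemma \ref{L:subs} to pull the quantifier-free formulas $\eta$ and $\psi_i$ back down. The only difference is notational (the paper writes $B' \preceq B$ where you write $B \preceq A$).
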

\begin{proof}
Let $\cB$ be a separation subclass of $\cA$, let $\Sigma$ be a separation scheme defining $\cB$ relative to $\cA$, and suppose $\cA$ is closed under taking elementary substructures. Let $B\in\cB$, and let $B'$ be an elementary substructure of $B$. Then $B'\in\cA$, as $\cA$ is closed under taking elementary substructures. We aim to show that $B'\models \Sigma$, and thus that $B'\in\cB$. So, given arbitrary $\sigma\in \Sigma$ we must show $B'\models \sigma$. If the order of $\sigma$ is 0 then this follows immediately from the assumption that $B'$ is an elementary substructure of $B$ (recall Definition \ref{D:Srule}). So suppose 
\[\sigma = \forall\vec{x}_N\Big(\mu(\vec{x}_N)\rightarrow \exists \vec{C}_K\big(\eta(\vec{x}_N)\wedge \bw_I\forall \vec{y}_{M_i}(\gamma_i(\vec{y}_{M_i})\rightarrow \psi_i(\vec{y}_{M_i}))\big)\Big),\]
for some $I\neq \emptyset$, as the case where $\tau = \top$ is similar but easier. 
Let $ a_1,\ldots,a_N\in B'$, and suppose $B'\models \mu(\vec{a}_N)$. Then, as $B'$ is an elementary substructure of $B$ we must have $B\models\mu(\vec{a}_N)$, and thus 
\[B\models \exists \vec{C}_K\big(\eta(\vec{a}_N)\wedge \bw_I\forall \vec{y}_{M_i}(\gamma_i(\vec{y}_{M_i})\rightarrow \psi_i(\vec{y}_{M_i}))\big).\]
 This is equivalent to saying that we can extend $\sL$ with new unary predicate symbols $C_1,\ldots,C_K$ to a signature $\sL^+$, and make $B$ into an $\sL^+$-structure in such a way that $B\models \eta(\vec{a}_N)\wedge \bw_I\forall \vec{y}_{M_i}(\gamma_i(\vec{y}_{M_i})\rightarrow \psi_i(\vec{y}_{M_i}))$ when this is treated as an $\sL^+$-formula in the obvious way. We treat $B'$ as an $\sL^+$ structure by interpreting the new predicates as the restrictions of their interpretations in $B$. We aim to use Lemma \ref{L:subs}.  

First of all, we have $B'\models \eta(\vec{a}_N)$ by immediate application of Lemma \ref{L:subs}. Now, let $i\in I$, let $b_1,\ldots,b_{M_i}\in B'$, and suppose $B'\models \gamma_i(\vec{b}_{M_i})$. Then, as $B'$ is an elementary substructure of $B$, we also have $B\models \gamma_i(\vec{b}_{M_i})$, as $\gamma_i$ is an $\sL$-formula, and thus also $B\models \psi_i(\vec{b}_{M_i})$, as $B\models \sigma$. So, again by Lemma \ref{L:subs}, we have $B'\models \psi_i(\vec{b}_{M_i})$. This is true for arbitrary choices of $b_1,\ldots,b_{M_i}\in B'$, so it follows that $B'\models \forall \vec{y}_{M_i}(\gamma_i(\vec{y}_{M_i})\rightarrow \psi_i(\vec{y}_{M_i}))$ as required. This is true for all $i\in I$, and so $B'\models \sigma$. This is true for all $\sigma\in \Sigma$, and so $B'\models \Sigma$. Since $B'\in\cA$ and $B'\models \Sigma$ it follows that $B'\in \cB$ as required.          
\end{proof}

\begin{thm}\label{T:same}
If $\cA$ is elementary and $\cB$ is a separation subclass of $\cA$ then $\cB$ is also elementary.
\end{thm}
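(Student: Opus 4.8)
The plan is to deduce the theorem from the three preparatory lemmas together with the model-theoretic characterisation of elementary classes in \cite[Theorem 2.13]{FMS62}, which says that a class closed under ultraproducts and elementary substructures (and isomorphism) is elementary. Since essentially all of the genuine work has already been packaged into Lemmas \ref{L:pseud}, \ref{L:ultra}, and \ref{L:Csubs}, the argument here should be a short assembly: I want to check that the hypothesis ``$\cA$ is elementary'' supplies exactly the two closure properties that those lemmas demand of $\cA$, transfer them to $\cB$, and then invoke the characterisation.

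First I would record that an elementary class enjoys the two relevant closure properties. Closure of $\cA$ under ultraproducts is immediate from \L o\'s's theorem. Closure of $\cA$ under elementary substructures is equally immediate: if $A\in\cA$ is a model of the axiomatising theory $T$ and $B\preceq A$, then $B\equiv A$, so $B\models T$ and hence $B\in\cA$.

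With these in hand, the two transfer steps run as follows. By Lemma \ref{L:pseud}, $\cB$ is pseudoelementary relative to $\cA$; combining this with closure of $\cA$ under ultraproducts, Lemma \ref{L:ultra} gives that $\cB$ is closed under ultraproducts. Separately, since $\cB$ is a separation subclass of $\cA$ and $\cA$ is closed under elementary substructures, Lemma \ref{L:Csubs} gives that $\cB$ is closed under elementary substructures. Because satisfaction of a (second-order) separation scheme is invariant under isomorphism, $\cB$ is also closed under isomorphism. Thus $\cB$ meets the hypotheses of \cite[Theorem 2.13]{FMS62}, whence $\cB$ is elementary, as required.

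The main obstacle is not in this final assembly but in the ingredients I am permitting myself to assume. In a genuinely from-scratch attempt the delicate point would be establishing closure under elementary substructures (Lemma \ref{L:Csubs}): because a separation rule existentially quantifies over the monadic predicates $\vec{C}_K$, one must argue that the witnessing sets chosen in a structure $B$ can be restricted to an elementary substructure $B'$ while preserving the relevant quantifier-free constraints, which is precisely the content of Lemma \ref{L:subs}. Granting the lemmas, the only thing to be careful about is that \cite[Theorem 2.13]{FMS62} really does characterise elementary classes through closure under ultraproducts and elementary substructures, rather than through the Keisler--Shelah route via ultraroots; the lemmas have been tailored to the former, so that is the route I would take.
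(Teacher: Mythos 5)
Your proposal is correct and follows exactly the paper's own route: Lemmas \ref{L:pseud} and \ref{L:ultra} give closure under ultraproducts, Lemma \ref{L:Csubs} gives closure under elementary substructures, and \cite[Theorem 2.13]{FMS62} then yields that $\cB$ is elementary. The additional observations you make (that an elementary $\cA$ has the two closure properties the lemmas require, and that satisfaction of a separation scheme is isomorphism-invariant) are correct and left implicit in the paper.
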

\begin{proof}
$\cB$ is closed under taking ultraproducts, by Lemmas \ref{L:pseud} and \ref{L:ultra},  and is also closed under elementary substructures, by Lemma \ref{L:Csubs}, so the result follows from \cite[Theorem 2.13]{FMS62}. 
\end{proof}

Theorem \ref{T:same} is not constructive, but we will later exploit the fact that we know that separation subclasses of elementary classes \emph{can} be axiomatised to produce explicit axiomatisations.

\section{The separation game}\label{S:game}
We will define games played between two players, Abelard ($\forall$) and Eloise ($\exists$). The point here is that the existence of winning strategies in these games relates to a structure's membership in separation subclasses, in a sense to be made precise in Proposition \ref{P:game} below, and can also be captured in first-order logic, as we explain in Section \ref{S:ax}. So the games provide a means to translate the second-order separation rules defining separation subclasses into first-order logic. 

Our games are played over a fixed $\sL$-structure in rounds numbered by the natural numbers starting with zero. In each round, $\forall$ plays first, then $\exists$ must respond. If a player has no legal move to make when required to play, then that player loses the game immediately, and the game does not continue. If one player loses, then the other player necessarily wins. We say that $\forall$ has an $r$-strategy if he can play in a way that guarantees he wins no later than round $r$. We say $\exists$ has an $r$-strategy if she can play in a way that guarantees that $\forall$ will not win till at least the $(r+1)$th round, either by not losing, or by winning herself prior to that point. We say that $\exists$ has an $\omega$-strategy if she can play in such a way that she can either win or survive indefinitely, however $\forall$ plays.

We now define the rules of our games more precisely. Let $\sL$ be a first-order signature, and let $\cA$ be a class of $\sL$-structures. Let $\Sigma$ be a separation scheme for $\sL$, and let $\sigma=\forall\vec{x}_N\Big(\mu(\vec{x}_N)\rightarrow \exists \vec{C}_K(\eta(\vec{x}_N)\wedge \tau)\Big)\in \Sigma^{>0}$ (recall that $\Sigma^{>0}$ is the subset of $\Sigma$ containing the separation rules of positive order). Let $A\in\cA$, and for each $k\in\{1,\ldots,K\}$ let $S_k,\bar{S}_k\subseteq A$. We define the $(A,\sigma)$\textbf{-game with starting position} $(S_1,\ldots,S_K,\bar{S}_1,\ldots,\bar{S}_K)$. The idea is that, for each $k$, \/ $S_k$ will contain elements definitely specified by the monadic predicate $C_k$, and $\bar{S}_k$ will denote a set of elements that are definitely in its complement. Over the course of the game $\exists$ is forced to decide whether elements of $A$ are, or are not, contained in $S_k$. Note that at any given point in the game, $\bar{S}_k$ will generally be a strict subset of the complement of $S_k$, as there will usually be elements that $\exists$ has not yet been forced to make a decision about. If $\exists$ cannot make a move that does not violate the conditions to be defined below, then she loses the game. Formally, the game is played as follows:

\begin{itemize}
\item In round 0, $\forall$ chooses $a_1,\ldots,a_N\in A$ such that $A\models \mu(\vec{a}_N)$. In response, $\exists$ must decide, for each $n\in\{1,\ldots,N\}$ and $k\in\{1,\ldots,K\}$, whether $a_n\in S_k$. If yes then $a_n$ is added to $S_k$. If no then $a_n$ is added to $\bar{S}_k$. \/ $\exists$ must choose in such a way that:
\begin{enumerate}[(1)]
\item $A\models \eta(\vec{a}_N)$, where $\eta$ is treated as a formula for signature 
\[\sL^+ = \sL\cup\{C_1,\ldots,C_K\},\] 
and $C_k$ is interpreted as $S_k$ for all $k$ (where $S_k$ includes any elements newly added by $\exists$).
\item $S_k\cap \bar{S}_k=\emptyset$ for all $k$.
\end{enumerate}
\item In round $r$ for $r> 0$, $\forall$ must play a move of form $(\tau_i, \vec{b}_{M_i})$, where 
\[\tau_i = \forall \vec{y}_{M_i}\Big(\gamma(\vec{y}_{M_i})\rightarrow \psi(\vec{y}_{M_i})\Big)\] 
is part of the conjunction $\tau=\bw_I \tau_i$, and $b_1,\ldots,b_{M_i}\in A$ such that $A\models \gamma (\vec{b}_{M_i})$. If $\tau = \top$ then $\forall$ cannot do this, and so he loses.

$\exists$ must respond by deciding, for each $m\in \{1,\ldots,M_i\}$ and $k\in\{1,\ldots,K\}$, whether $b_m\in S_k$. If yes then $b_m$ is added to $S_k$, and if no then $b_m$ is added to $\bar{S}_k$. \/ $\exists$ must choose in such a way that: 
\begin{enumerate}[(1)]
\item $A\models \psi(\vec{b}_{M_i})$, where $\psi$ is treated as an $\sL^+$-formula, and $C_k$ is interpreted as $S_k$ for all $k$ (where $S_k$ includes new elements added by $\exists$). 
\item $S_k\cap \bar{S}_k=\emptyset$ for all $k$. 
\end{enumerate}     
\end{itemize}

We sometimes refer to the $(A,\sigma)$-game with starting position $S_k=\bar{S}_k=\emptyset$ for all $k\in \{1,\ldots,K\}$ as the \textbf{simple} $(A,\sigma)$-game. Note that we only define these games for $\sigma$ of positive order, as in the order zero case $\sigma$ is just a first-order sentence, so there is no need `translate' it into first-order logic.  

\begin{prop}\label{P:game}
Let $\cA$ be a class of $\sL$-structures, let $\cB$ be a separation subclass of $\cA$ defined by the separation scheme $\Sigma$, and let $A\in \cA$. Then:
\begin{enumerate}[(1)]
\item If $A\in \cB$, then $\exists$ has an $\omega$-strategy for the simple $(A,\sigma)$-game for all $\sigma\in \Sigma^{>0}$. 
\item If $\cB$ is essentially countable and $A$ is countable, then the converse is true. I.e. if $A\models \sigma$ for all $\sigma\in\Sigma\setminus\Sigma^{>0}$, and if $\exists$ has an $\omega$-strategy for the simple $(A,\sigma)$-game for all $\sigma\in \Sigma^{>0}$, then $A\in\cB$.  
\end{enumerate}
\end{prop}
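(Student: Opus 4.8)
The plan is to prove the two parts separately, with (1) a direct strategy-copying argument and (2) a limiting construction that is where the countability hypotheses do their work.

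For (1), suppose $A\in\cB$, so that $A\models\Sigma$ and in particular $A\models\sigma$ for every $\sigma\in\Sigma^{>0}$. Fixing such a $\sigma=\forall\vec{x}_N(\mu(\vec{x}_N)\to\exists\vec{C}_K(\eta(\vec{x}_N)\wedge\tau))$, I would describe $\exists$'s $\omega$-strategy as follows. When $\forall$ opens round $0$ with $\vec{a}_N$ satisfying $A\models\mu(\vec{a}_N)$, the fact that $A\models\sigma$ supplies witnessing sets $D_1,\ldots,D_K\subseteq A$ with $A\models\eta(\vec{a}_N)\wedge\tau$ under the interpretation $C_k=D_k$. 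Then $\exists$ fixes these sets once and for all, and whenever required to decide whether a played element $c$ lies in $S_k$, she answers yes exactly when $c\in D_k$. I would check this never loses: condition (2) holds automatically because $S_k\subseteq D_k$ and $\bar{S}_k\cap D_k=\emptyset$ throughout, so $S_k\cap\bar{S}_k=\emptyset$; and condition (1) holds because $\eta$ and each $\psi_i$ are quantifier-free with free variables among the played elements, so their truth under $C_k=S_k$ agrees with their truth under $C_k=D_k$, the latter being guaranteed by $A\models\eta(\vec{a}_N)$ and $A\models\tau_i$ (here $\gamma_i$ is a pure $\sL$-formula, so $A\models\gamma_i(\vec{b}_{M_i})$ forces $A\models\psi_i(\vec{b}_{M_i})$). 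The case $\tau=\top$ is immediate since $\forall$ cannot move past round $0$.

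For (2), since $A\in\cA$ is given and $A$ satisfies every order-zero rule by hypothesis, it suffices to show $A\models\sigma$ for each $\sigma\in\Sigma^{>0}$. Fixing such a $\sigma$ and fixing $\vec{a}_N\in A$ with $A\models\mu(\vec{a}_N)$, I must produce witnessing sets $D_k$. The idea is to let $\forall$ play the simple game exhaustively: he opens round $0$ with $\vec{a}_N$, and then, because $A$ is countable and $\Sigma$ is essentially countable (so each index set $I$ is countable), the collection of all legal $\forall$-moves $(\tau_i,\vec{b}_{M_i})$ with $A\models\gamma_i(\vec{b}_{M_i})$ is countable. I would have $\forall$ enumerate and play all of them, one per round. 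Since $\exists$ has an $\omega$-strategy, she responds to this entire sequence without ever losing, yielding a single infinite play, and I then set $D_k=\bigcup_r S_k^{(r)}$, the eventual content of $S_k$.

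The verification is where the argument really lives. Because the game keeps $S_k$ and $\bar{S}_k$ disjoint at every stage and both only grow, each element's membership is decided permanently the first time it is played, so $D_k$ and $\bigcup_r\bar{S}_k^{(r)}$ are disjoint and every played element is sorted consistently. For $\eta$: in round $0$ the move of $\exists$ forces $A\models\eta(\vec{a}_N)$ against the then-current $S_k$, and since each $a_n$'s membership is already final this equals the truth under $C_k=D_k$. For each $\tau_i$ and each $\vec{b}_{M_i}$ with $A\models\gamma_i(\vec{b}_{M_i})$: $\forall$ played exactly this move at some round, at which point $\exists$ secured $A\models\psi_i(\vec{b}_{M_i})$ against the current $S_k$; as $\psi_i$ is quantifier-free in $\vec{y}_{M_i}$ and each $b_m$'s membership is final by then, the same holds under $C_k=D_k$. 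Hence $A\models\eta(\vec{a}_N)\wedge\tau$ under $C_k=D_k$, giving the required witness, and as $\vec{a}_N$ was arbitrary, $A\models\sigma$. The main obstacle I anticipate is making part (2) rigorous: one must ensure $\forall$'s enumeration genuinely exhausts all legal moves (this is precisely where countability of both $A$ and each $I$ is needed, and why the hypotheses cannot be dropped), and one must argue carefully that the snapshot truth of a quantifier-free formula at the round its move is played coincides with its truth against the limit sets $D_k$ — which rests on the permanence of decisions guaranteed by the disjointness condition. The order-zero rules and the degenerate $\tau=\top$ case must be mentioned but are routine.
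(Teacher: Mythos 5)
Your proposal is correct and follows essentially the same route as the paper: part (1) by fixing the witnessing sets $D_k$ supplied by $A\models\sigma$ and answering all membership queries according to them, and part (2) by having $\forall$ exhaustively enumerate all legal moves (using countability of $A$ and of each index set $I$) and taking the limit sets $\bigcup_r S_k^{(r)}$ as witnesses. The only cosmetic difference is that the paper phrases part (2) as a proof by contradiction, whereas you verify $\eta$ and each $\tau_i$ directly against the limit sets; the underlying construction and the role of the countability hypotheses are identical.
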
   
\begin{proof}
For 1), if $A\in \cB$ then $A\models \sigma$, so, given $\vec{a}_N\in A$ with $A\models \mu(\vec{a}_N)$, there are monadic predicates $C_1,\ldots,C_K$ such that $A\models \eta(\vec{a}_N)\wedge \tau$. In this case $\exists$ can guarantee to never lose by assigning an element $b\in A$ to $S_k$ if $A\models C_k(b)$, and to $\bar{S}_k$ otherwise, whenever she is forced to make a choice.

For 2), suppose that $A$ is countable, and that $\exists$ has an $\omega$-strategy for the simple $(A,\sigma)$-game for every $\sigma\in\Sigma^{>0}$. Let 
\[\sigma= \forall\vec{x}_N\Big(\mu(\vec{x}_N)\rightarrow \exists \vec{C}_K(\eta(\vec{x}_N)\wedge \tau)\Big)\in\Sigma^{>0},\]
 let $a_1,\ldots,a_N\in A$ with $A\models \mu(\vec{a}_N)$, and suppose $A\not\models \mu(\vec{a}_N)\rightarrow \exists \vec{C}_K(\eta(\vec{a}_N)\wedge \tau)$. Then it follows that $A\not\models \exists \vec{C}_K(\eta(\vec{a}_N)\wedge \tau)$. 

If $\tau = \top$, then $\exists$ has no response to the opening move $\vec{a}_N$ by $\forall$, contradicting the assumption that she has an $\omega$-strategy in the game. So we assume that $\tau = \bw_I\tau_i$ for some non-empty $I$. Since $\Sigma$ is essentially countable and $A$ is countable, we can order the moves $(\tau_i, \vec{b}_M)$ that $\forall$ could potentially make using the natural numbers. Suppose $\forall$ plays according to the strategy whereby in round 0 he plays $\vec{a}_N$, and in every subsequent round he plays the lowest ranked legal move that he has not yet played. 

Consider the sets $S_k$ for $k\in \{1,\ldots,K\}$ constructed by $\exists$ as she follows her $\omega$-strategy against $\forall$. In other words, each $S_k$ is the union of the corresponding sets from the individual rounds of the game.  By the rules governing the opening round of play, and the assumption that $\exists$ is playing according to an $\omega$-strategy, we must have $A\models \eta(\vec{a}_N)$, if $C_k$ is interpreted as $S_k$ for all $k$. Thus, if  $A\not\models \exists \vec{C}_K(\eta(\vec{a}_N)\wedge \tau)$ there must be some 
$i\in I$ with $\tau_i= \forall \vec{y}_{M_i}\Big(\gamma(\vec{y}_{M_i})\rightarrow \psi(\vec{y}_{M_i})\Big)$ such that $A\not\models \tau_i$, where $C_k$ is interpreted as $S_k$ for all $k$. 

It follows that there must be $b_1,\ldots, b_{M_i}\in A$ with $A\models \gamma(\vec{b}_{M_i})$ and $A\not\models \psi(\vec{b}_{M_i})$, again interpreting $C_k$ as $S_k$ for all $k$. But this corresponds to a legal move by $\forall$, so he must have played it at some point, as his strategy implies that he eventually plays every move that becomes available after the first round. Thus we must have $A\models \psi(\vec{b}_{M_i})$ when $C_k$ is interpreted as $S_k$ after all, as $\exists$ is following an $\omega$-strategy. This would be a contradiction. Thus we must have $A\models \exists \vec{C}_K(\eta(\vec{a}_N)\wedge \tau)$. Since this is true for every choice of $\vec{a}_N$ such that $A\models \mu(\vec{a}_N)$, we have $A\models \sigma$, and since this argument holds for all $\sigma\in \Sigma^{>0}$, and we have assumed that $A\models \sigma$ for all $\sigma\in\Sigma\setminus\Sigma^{>0}$, it follows that $A\in \cB$ as required.        
\end{proof} 

Note that round 0 is conceptually distinct from the subsequent rounds. We define the \textbf{reduced} $(A,\sigma)$\textbf{-game with starting position} $(S_1,\ldots,S_K,\bar{S}_1,\ldots,\bar{S}_K)$ to be the $(A,\sigma)$-game with the same starting position, but omitting round 0. For convenience we keep the same labeling for rounds as in the normal game, so the reduced game starts with round 1, not round 0. The concept of an $r$-strategy for $r\geq 1$ carries over without modification for both players.

\section{Generating recursive axiomatisations}\label{S:ax}
Given a separation scheme $\Sigma$, the next step is to find a set of first-order axioms equivalent to $\exists$ having an $\omega$-strategy in the simple $(A,\sigma)$-game for all $\sigma\in\Sigma^{>0}$. We must assume that $\Sigma$ is at least essentially recursively enumerable for the main result (Theorem \ref{T:main}) to hold. We also assume for convenience that for every $\sigma\in\Sigma^{>0}$ the associated $\tau$ is of form $\bw_{i\in\omega} \tau_i$. Recall that in practice, $\tau$ may be $\top$, or a conjunction of only finitely many formulas. We could avoid this assumption by dividing several of the definitions and proofs to come into `finite' and `infinite' cases, but we trust instead that the necessary alterations for the finite cases will become clear once the infinite case is understood.  

Writing down these axioms will involve some quite intricate notational constructions, and we will benefit greatly later from taking the time now to prove some technical results. Recall that for distinct variable symbols $x_1,\ldots,x_N$ we use the notation $\vec{x}_N=\{x_1,\ldots,x_N\}$. If $v$ is a valuation in the model theoretic sense, we will often write e.g. $v[\vec{x}_N]$ to stand for $\{v(x_1),\ldots,v(x_N)\}$. Similarly, if $Z$ is a set of variables we will use $v[Z]$ to denote $\{v(z):z\in Z\}$.

First we want to formalise in first-order logic the statement that $\exists$ can survive a certain finite number of rounds in an $(A,\sigma)$-game from a given starting position. We will do this by recursion. More explicitly, the statement that $\exists$ has an $(r+1)$-strategy will be formed by writing a formula to the effect that, whatever move $\forall$ makes, $\exists$ can distribute the elements picked out by $\forall$ in such a way that she has an $r$-strategy in the $(A,\sigma)$-game with the starting position that results from her choice.

To make this work, the states of the predicates $C_k$ for $k\in\{1,\ldots,K\}$ associated with the separation rule $\sigma$ at a given point in the game will be captured using free variables introduced for this purpose and an assignment $v$ of variables to elements of $A$. We will do this with free variables grouped into sets $Z_1,\ldots,Z_{K}$. The idea is that for $k\in\{1,\ldots,K\}$, the set $v[Z_k]$ captures the elements $\exists$ has assigned to $C_k$. Lemma \ref{L:pad} below explains how we can translate formulas involving the predicates $C_1,\ldots,C_K$ into formulas where they are captured by free variables in this way. 

\begin{lemma}\label{L:pad}
Let $\sL^+=\sL\cup\{C_1,\ldots,C_K\}$, where each $C_k$ is a unary predicate symbol not appearing in $\sL$, let $Y$ be a finite set of variable symbols, and let $\psi(Y)$ be a quantifier-free $\sL^+$-formula whose free variables are taken from $Y$. For each $k\in \{1,\ldots,K\}$ let $Z_k$ be a finite set of variable symbols. Let $\vec{Z}_K=(Z_1,\ldots,Z_K)$. Then we can define a quantifier-free $\sL$-formula $\psi_{\vec{Z}_K}$
whose free variables are from the set $Y\cup \bigcup _{k=1}^{K} Z_k$, such that, whenever $A$ is an $\sL$-structure and $v$ is an assignment, we have
\[A,v \models_{\sL} \psi_{\vec{Z}_K}(Y\cup \bigcup _{k=1}^{K} Z_k) \iff A, v \models_{\sL^+} \psi(Y),\]
where $A$ is treated as an $\sL^+$-structure by interpreting $C_k$ so that 
\[A,v\models C_k(x)\iff v(x)\in v[Z_k],\] for all $k\in\{1,\ldots,K\}$.
\end{lemma}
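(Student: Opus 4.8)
The plan is to define $\psi_{\vec{Z}_K}$ by structural recursion on $\psi$, with the only substantive step being the treatment of atomic subformulas of the form $C_k(t)$. The guiding observation is that, under the prescribed interpretation, $A,v\models C_k(t)$ holds precisely when $v(t)$ equals $v(z)$ for some $z\in Z_k$; that is, $C_k(t)$ expresses the purely first-order condition $\bv_{z\in Z_k}(t=z)$. So I would translate each atomic subformula $C_k(t)$ into this finite disjunction of equalities. Pure $\sL$-atomic subformulas (including equalities between $\sL$-terms and relation atoms) are left unchanged, and the recursion commutes with the connectives $\neg$ and $\wedge$ (hence with every Boolean combination). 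Because $\psi$ is quantifier-free there are no quantifier cases to consider, and each replacement introduces only the variables $z\in Z_k$, so the free variables of the resulting $\sL$-formula are visibly contained in $Y\cup\bigcup_{k=1}^{K}Z_k$, as required.

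Having fixed the definition, I would verify the biconditional by a parallel structural induction. For the base cases: if $\phi$ is a pure $\sL$-atomic formula its translation is $\phi$ itself, and since the $\sL^+$-expansion of $A$ agrees with $A$ on every symbol of $\sL$, we get $A,v\models_{\sL^+}\phi\iff A,v\models_{\sL}\phi$ immediately. If $\phi=C_k(t)$, then unwinding the interpretation gives
\[
A,v\models_{\sL^+}C_k(t)\iff v(t)\in v[Z_k]\iff \exists z\in Z_k\ v(t)=v(z)\iff A,v\models_{\sL}\bv_{z\in Z_k}(t=z),
\]
which is exactly the translation. The inductive step for $\neg$ and $\wedge$ is routine, since the semantics of the Boolean connectives is identical in the two logics and the translation was defined to commute with them.

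I expect no serious obstacle here; the statement is essentially a bookkeeping lemma establishing that quantifier-free $\sL^+$-truth under a ``frozen'' interpretation of the $C_k$ can be simulated in $\sL$ by replacing membership in $C_k$ with a disjunction of equalities against the variables of $Z_k$. The one point genuinely requiring care is the degenerate case $Z_k=\emptyset$: here the empty disjunction must be read as a contradiction (for instance $\neg(w=w)$ for some variable $w\in Y\cup\bigcup_k Z_k$), which correctly matches the fact that $v[Z_k]=\emptyset$ forces $C_k$ to be empty so that $C_k(t)$ is always false. Beyond that, the only thing to confirm is that the given interpretation of $C_k$ depends only on the set $v[Z_k]$ and is well-defined, and that the free-variable count is respected at each recursive step; both are transparent from the construction.
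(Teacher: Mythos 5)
Your proposal is correct and follows essentially the same route as the paper: structural induction on $\psi$, translating each atom $C_k(t)$ into the disjunction $\bv_{z\in Z_k}(t\approx z)$, leaving pure $\sL$-atoms unchanged, and commuting with the Boolean connectives. Your explicit handling of the degenerate case $Z_k=\emptyset$ (reading the empty disjunction as a contradiction) is a small point of care that the paper's proof leaves implicit.
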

\begin{proof}
We use induction on the construction of $\psi$. If $\psi$ is an atomic formula, then either:
\begin{enumerate}[(1)]
\item $\psi(Y) = R(t_1(Y),\ldots, t_N(Y))$, where $R$ is some $N$-ary relation symbol from $\sL$ and $t_n$ is an $\sL$-term for each $n\in \{1.\ldots,N\}$,
\item $\psi(Y) = t_1(Y) \approx t_2(Y)$ where $t_1$ and $t_2$ are $\sL$-terms, or  
\item $\psi(Y) = C_k(t(Y))$, where $t$ is an $\sL$-term and $k\in \{1,\ldots,K\}$.
\end{enumerate}
In cases (1) and (2), the interpretation of the additional predicates of $\sL^+$ isn't relevant, so we can define 
\[\psi_{\vec{Z}_K }(Y)=\psi(Y).\]
In the third case, define 
\[\psi_{\vec{Z}_K}(Y\cup Z_k)= (\bv_{z\in Z_k}t(Y)\approx z).\]
Then 
\begin{align*}
&\phantom{\iff i}A,v \models_{\sL} \psi_{\vec{Z}_K}(Y\cup Z_k )\\
&\iff v(t(Y)) = v(z)\text{ for some }z\in Z_k\\ 
&\iff A, v \models_{\sL^+} \psi(Y)\text{, where $C_k$ is interpreted as $v[Z_k]$ as described}.
\end{align*}  
For the inductive step, consider first $\neg\psi$ such that $\psi_{\vec{Z}_K }$ is known to exist for $\psi$. In this case we can just use $\neg\psi_{\vec{Z}_K}$, as

\begin{align*}
A, v \models_\sL \neg\psi_{\vec{Z}_K} &\iff A, v \not\models_{\sL} \psi_{\vec{Z}_K} \\
&\iff A, v \not\models_{\sL^+} \psi \\
&\iff A, v\models_{\sL^+} \neg\psi.
\end{align*}

Consider next $\psi^1\vee \psi^2$, such that appropriate $\psi^1_{\vec{Z}_K}$ and $\psi^2_{\vec{Z}_K}$ exist. We use $\psi^1_{\vec{Z}_K}\vee\psi^2_{\vec{Z}_K}$, because

\begin{align*}
A, v \models_\sL \psi^1_{\vec{Z}_K}\vee \psi^2_{\vec{Z}_K} &\iff A, v \models_{\sL} \psi^1_{\vec{Z}_K } \text{ or } A, v \models_{\sL} \psi^2_{\vec{Z}_K} \\
&\iff A, v \models_{\sL^+} \psi^1 \text{ or } A, v \models_{\sL^+} \psi^2 \\
&\iff A, v\models_{\sL^+} \psi^1\vee \psi^2.
\end{align*}

Since $\psi$ is quantifier-free, we are done.
\end{proof}

As explained above, the state of each $C_k$ during the $(A,\sigma)$-game will be captured by $v[Z_k]$ for some set $Z_k$ of variable symbols and assignment $v$. During the game, $\exists$ must assign elements both to $C_k$ and its complement. For each $k$, we will use a set $\bar{Z}_{k}$ of variable symbols for this purpose. Explicitly, $v[\bar{Z}_{k}]$ captures the elements that have been assigned to the complement of $C_k$. In each round of the game, $\exists$ is presented with new elements of $A$ that she must assign to $C_k$ sets or their complements. These new elements will be captured using $v$ and a new set of variable symbols, e.g. as $v[Y]$.

As discussed previously, a key idea for us is to define for each $r\geq 1$ formulas to the effect of `given a starting position $(v[Z_1],\ldots,v[Z_K],v[\bar{Z}_{1}],\ldots,v[\bar{Z}_{K}])$ in an $(A,\sigma)$-game, whatever move $\forall$ makes involving elements $v[Y]$, \/ $\exists$ can for each $k$ assign each element of $v[Y]$ to either $v[Z_k]$ or $v[\bar{Z}_{k}]$ to obtain $v[Z'_k]$ and $v[\bar{Z}'_{k}]$ in such a way that she has an $(r-1)$-strategy in the $(A,\sigma)$-game with starting position $(v[Z'_1],\ldots,v[Z'_K],v[\bar{Z}'_{1}],\ldots,v[\bar{Z}'_{K}])$'. As mentioned above, this will be done using recursion, so the formula stating that $\exists$ has an $(r-1)$-strategy in the $(A,\sigma)$-game with starting position $(v[Z'_1],\ldots,v[Z'_K],v[\bar{Z}'_{1}],\ldots,v[\bar{Z}'_{K}])$ must involve the variables of $Y$, but now cast in specified roles as members of the sets $Z'_1,\ldots,Z'_{K},\bar{Z}'_{1},\ldots,\bar{Z}'_{K}$ (as these sets of variables are used to track the members of $C_1,\ldots,C_K$).   

The next definition sets up a notation for this process of adding new variables to sets. The situation to be described is that we have sets of variables $Z_1,\ldots,Z_{K},\bar{Z}_{1},\ldots,\bar{Z}_{k}$, and another set of variables $Y$. For each $y\in Y$ and for each $1\leq k \leq K$ we want to add $y$ to either $Z_k$ or to $\bar{Z}_{k}$. What we do for a given $y\in Y$ and $k\in\{1,\ldots,K\}$ is controlled by a function $f:Y\times \{1,\ldots,K\}\to \{0,1\}$. Explicitly, if $f(y,k)= 1$ then we add $y$ to $Z_k$, otherwise we add $y$ to $\bar{Z}_{k}$. The functions $\Delta_K$ and $\Delta_{\bar{K}}$ in Definition \ref{D:delta} below formalise this. The input is the data of the sets $Z_1,\ldots,Z_{K},Y$ (for $\Delta_K$), or $Z_1,\ldots,Z_{K},\bar{Z}_1,\ldots,\bar{Z}_{K},Y$ (for $\Delta_K^+$),  and the `control' function $f$, and the output is either $Z_1,\ldots,Z_{K}$ (for $\Delta_K$),  or  $Z_1,\ldots,Z_{K},\bar{Z}_1,\ldots,\bar{Z}_{K},Y$ (for $\Delta_K^+$) after the elements of $Y$ have been added as just described. We define two functions because sometimes we care about the $C_k$ sets and their complements, and other times just the sets themselves.  

\begin{defn}[$\Delta_K$, $\Delta_{\bar{K}}$, $F^K_Y$]\label{D:delta}
Given a set $Y$ and $1\leq K<\omega$, let $f:Y\times \{1,\ldots,K\}\to\{0,1\}$. For each $k\in \{1,\ldots,K\}$ let $Z_k$ and $\bar{Z}_k$ be sets. We use the shorthand $\vec{Z}_K=(Z_1,\ldots,Z_K)$, and $\vec{\bar{Z}}_K=(\bar{Z}_{1},\ldots,\bar{Z}_{K})$. Define
\[\Delta_K(\vec{Z}_K, f) = (Z'_1,\ldots, Z'_{K}),\]
where for $k\in\{1,\ldots,K\}$ we have
\[Z'_k = Z_k\cup \{y\in Y : f(y,k)=1\}.\] 

Similarly, define 
\[\Delta_{\bar{K}}(\vec{Z}_K, \vec{\bar{Z}}_K, f) = (Z'_1,\ldots, Z'_{K},\bar{Z}'_1,\ldots,\bar{Z}'_K),\]
where $Z'_k$ is as above for all $k\in \{1,\ldots,K\}$, and 
\[\bar{Z}'_k = \bar{Z}_k\cup \{y\in Y : f(y,k)=0\}.\]

We will use $F^K_Y$ to denote the set of functions from $Y\times \{1,\ldots,K\}$ to $\{0,1\}$.
\end{defn}

We now define $\sL$-formulas as follows, noting the assumptions made about $\Sigma$ stated at the start of this section. We assume we are working with a countably infinite pool of variable symbols.  

\begin{enumerate}[$\bullet$]
\item For each $1\leq K< \omega$, and for each $\vec{Z}_K=(Z_1,\ldots,Z_{K})$ and $\vec{\bar{Z}}_K=(\bar{Z}_1,\ldots,\bar{Z}_{K})$ such that $Z_k$ and $\bar{Z}_k$ are finite sets of variables for all $k\in \{1,\ldots,K\}$, define 
\[\mathsf{D}_{(\vec{Z}_K,\vec{\bar{Z}}_K)} \]
to be a quantifier-free $\sL$-formula with free variables exactly $\bigcup_{k=1}^{K} (Z_k \cup \bar{Z}_k) $ such that 
\[\phantom{\iff i}A, v\models \mathsf{D}_{(\vec{Z}_K,\vec{\bar{Z}}_K)} \iff v[Z_k]\cap v[\bar{Z}_{k}]=\emptyset\text{ for all }k\in \{1,\ldots,K\}.\]
\item For each $1\leq K< \omega$, for each $i\in \omega$, for each $\sigma\in\Sigma^{>0}$, and for each $(\vec{Z}_K,\vec{\bar{Z}}_K)$, define 
\[\alpha^\sigma_{(\vec{Z}_K,\vec{\bar{Z}}_K) 0i} = \mathsf{D}_{(\vec{Z}_K,\vec{\bar{Z}}_K) } .\]
\item For each 
\[\sigma=\forall\vec{x}_N\Big(\mu(\vec{x}_N)\rightarrow \exists \vec{C}_K\big(\eta(\vec{x}_N)\wedge \bw_{i\in\omega}\forall \vec{y}_{M_i}(\gamma^i(\vec{y}_{M_i})\rightarrow \psi^i(\vec{y}_{M_i}))\big)\Big)\in \Sigma^{>0},\] for each $i\in\omega$, for each $1\leq r < \omega$, and for each $(\vec{Z}_K,\vec{\bar{Z}}_K)$, recursively define
\[\alpha^\sigma_{(\vec{Z}_K,\vec{\bar{Z}}_K) ri}=\bw_{j\leq i}\forall \vec{y}_{M_j}\Big(\gamma^j(\vec{y}_{M_j})\ra \bv_{f\in F^K_{\vec{y}_{M_j}}} \big(\psi^j_{\Delta_K(\vec{Z}_K, f)}\wedge \alpha^\sigma_{\Delta_{\bar{K}}(\vec{Z}_K,\vec{\bar{Z}}_K ,f)(r-1)i} \big)\Big),\]
where $\psi^j_{\Delta_K(\vec{Z}_K, f)}$ is constructed from $\psi^j$ as in Lemma \ref{L:pad}. In other words,\[A,v\models \psi^j_{\Delta_K(\vec{Z}_K, f)}(\vec{y}_{M_j}\cup\bigcup_{k=1}^K Z'_k)\iff A,v \models_{\sL^+} \psi^j(\vec{y}_{M_j}),\] 
where each $C_k$ is interpreted as $v[Z'_k]$, and $Z'_k$ is constructed from $Z_k$ and $\vec{y}_{M_j}$ according to $f$ (recall Definition \ref{D:delta}).  What these formulas are intended to capture is the idea that $\exists$ can respond to all moves involving $\tau_j$ for $j\leq i$ played by $\forall$, and moreover can do so in such a way that she will continue to be able to respond successfully for at least $r$ rounds. This will be made precise in Lemma \ref{L:reduced}. Note that although it is not apparent from the notation, we are assuming that every new occurrence of $\vec{y}_{M_j}$ in the construction of these formulas involves only fresh variable symbols. If we allow variable symbols to be repeated then it turns out we do not properly capture the concept of `adding elements to $Z_k$', which is what the $\Delta$ operations are supposed to be for. This is explained in the proof of the following lemma.
\end{enumerate}

\begin{lemma}\label{L:reduced}
Let $1\leq K<\omega$, let $A\in\cA$, let $\cB$ be a separation subclass of $\cA$ defined by the essentially r.e. separation scheme $\Sigma$, let $1\leq K,<\omega$, and let $\sigma\in \Sigma$ be a separation rule of order $K$. Then for all finite $Z_1,\ldots,Z_{K},\bar{Z}_1,\ldots,\bar{Z}_K$, for all assignments $v$, for all $i\in I$ and for all $1\leq r< \omega$, the following are equivalent:
\begin{enumerate}[(1)]
\item $A,v\models \alpha^\sigma_{(\vec{Z}_K,\vec{\bar{Z}}_K) ri}$.
\item $\exists$ has an $r$-strategy in the reduced $(A,\sigma)$-game with starting position 
\[(v[Z_1],\ldots,v[Z_{K}],v[\bar{Z}_1],\ldots,v[\bar{Z}_{K}])\] 
where $\forall$ can only play moves involving $\tau_j$ when  $j\leq i$.
\end{enumerate}
\end{lemma}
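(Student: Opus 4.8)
The plan is to argue by induction on $r$, keeping throughout a dictionary between game data and the syntactic data appearing in $\alpha^\sigma_{(\vec{Z}_K,\vec{\bar{Z}}_K) ri}$. Under this dictionary a game position $(S_1,\ldots,S_K,\bar{S}_1,\ldots,\bar{S}_K)$ is recorded by the pair $(\vec{Z}_K,\vec{\bar{Z}}_K)$ together with $v$, via $S_k = v[Z_k]$ and $\bar{S}_k = v[\bar{Z}_k]$; a legal move $(\tau_j,\vec{b}_{M_j})$ by $\forall$ (with $j\le i$ and $A\models\gamma^j(\vec{b}_{M_j})$) is recorded by instantiating the fresh block $\vec{y}_{M_j}$ guarded by $\gamma^j$; and a response by $\exists$, i.e.\ a decision, for each new element and each $k$, whether to place it in $S_k$ or in $\bar{S}_k$, is recorded by a control function $f\in F^K_{\vec{y}_{M_j}}$, the resulting position being exactly $\Delta_{\bar{K}}(\vec{Z}_K,\vec{\bar{Z}}_K,f)$ read off through $v$. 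Lemma \ref{L:pad} supplies the crucial translation: $A,v\models\psi^j_{\Delta_K(\vec{Z}_K, f)}$ holds precisely when $\psi^j(\vec{b}_{M_j})$ holds once each $C_k$ is interpreted as the updated set $v[Z'_k]$, which is the first legality condition on $\exists$'s reply.

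For the base case $r=1$ I would unfold the definition, using $\alpha^\sigma_{(\vec{Z}_K,\vec{\bar{Z}}_K) 0i}=\mathsf{D}_{(\vec{Z}_K,\vec{\bar{Z}}_K)}$. Then $A,v\models\alpha^\sigma_{(\vec{Z}_K,\vec{\bar{Z}}_K) 1i}$ asserts that for every $j\le i$ and every instantiation of $\vec{y}_{M_j}$ satisfying $\gamma^j$ there is an $f$ for which both $\psi^j$ holds after the update and $\mathsf{D}$ holds of $\Delta_{\bar{K}}(\vec{Z}_K,\vec{\bar{Z}}_K,f)$, i.e.\ the new sets are still pairwise disjoint. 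Through the dictionary this is exactly the statement that $\exists$ has a legal reply to every move $\forall$ can open with, that is, that she has a $1$-strategy in the reduced game. For the inductive step with $r\ge 2$ I unfold one level of the recursion, apply Lemma \ref{L:pad} to the $\psi^j$-conjunct exactly as above, and apply the induction hypothesis (stated for all starting positions, with the same $i$) to the conjunct $\alpha^\sigma_{\Delta_{\bar{K}}(\vec{Z}_K,\vec{\bar{Z}}_K,f)(r-1)i}$, which is then equivalent to $\exists$ having an $(r-1)$-strategy from the updated position. The resulting statement, ``for every opening move of $\forall$ there is a legal reply $f$ from which $\exists$ has an $(r-1)$-strategy'', is precisely the recursive content of a reduced $r$-strategy, so the two sides match.

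The step needing care is the treatment of the disjointness requirement. In the game $\exists$ must keep $S_k\cap\bar{S}_k=\emptyset$ after \emph{every} round, whereas the formulas test disjointness only once, at the foot of the recursion $\alpha^\sigma_{\cdot\,0i}=\mathsf{D}$, all intermediate rounds contributing only their $\psi^j$-clauses. The reconciliation is that the sets $S_k,\bar{S}_k$ only ever grow as play proceeds, so once an overlap $S_k\cap\bar{S}_k\neq\emptyset$ is created it can never be removed; conversely, disjointness of the sets at the end of a play forces disjointness at every earlier stage. Hence a single disjointness test at the base faithfully encodes ``legal at every round''. One subtlety I would flag explicitly is the edge case in which $\forall$ has no legal opening move at all: since the available moves $(\tau_j,\vec{b}_{M_j})$ with $j\le i$ depend only on $A$, $\sigma$ and $i$, and not on the current position, this case is position-independent, and in it both sides are vacuously true at every $r$ --- $\forall$ immediately loses for want of a move, and the guarded universal quantifier in $\alpha^\sigma_{\cdot\,ri}$ is vacuously satisfied --- so no genuine disjointness failure can ever be hidden behind a vacuously true lower-level formula. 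Finally, I would point out where the fresh-variable convention is used: because each new occurrence of $\vec{y}_{M_j}$ uses fresh symbols, the operators $\Delta_K$ and $\Delta_{\bar{K}}$ genuinely \emph{add} elements to the sets $Z_k,\bar{Z}_k$ while leaving $\forall$ free to instantiate $\vec{y}_{M_j}$ with arbitrary elements of $A$; reusing a symbol already present in some $Z_k$ would silently constrain $v$ and thereby misrepresent the game.
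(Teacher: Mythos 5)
Your proof is correct and follows essentially the same route as the paper's: induction on $r$, with Lemma \ref{L:pad} mediating between the control functions $f\in F^K_{\vec{y}_{M_j}}$ and $\exists$'s placement decisions, and the base case discharged by the $\mathsf{D}$-formula. Your explicit justification that a single disjointness test at the foot of the recursion suffices --- because the sets only grow and the set of legal $\forall$-moves is position-independent --- is a point the paper leaves implicit in its inductive step, and it is handled correctly.
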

\begin{proof}
We use induction on $r$. For the base case ($r = 1$), let $i\in\omega$ and suppose first that $A,v\models \alpha^\sigma_{(\vec{Z}_K,\vec{\bar{Z}}_K) 1i}$. Let $v'$ be an assignment agreeing with $v$ about everything except, possibly, $\vec{y}_{M_j}$, and suppose $A,v'\models \gamma^j(\vec{y}_{M_j})$ for some $j\leq i$. Then there is $f\in F^K_{\vec{y}_{M_j}}$ such that $A,v'\models\psi^j_{\vec{Z}'_K} \wedge \mathsf{D}_{(\vec{Z}'_K,\vec{\bar{Z}}'_K)}$ where  $(Z'_1,\ldots, Z'_{K})= \Delta_K(Z_1,\ldots,Z_{K},f)$ and $(Z'_1,\ldots, Z'_{K},\bar{Z}'_1,\ldots, \bar{Z}'_{K})= \Delta_{\bar{K}}(\vec{Z}_K,\vec{\bar{Z}}_K,f)$. I.e. $Z'_k$ is the new value of $Z_k$ as controlled by $f$, and similar for $\bar{Z}'_k$, for each $k\in\{1,\ldots,K\}$ (recall Definition \ref{D:delta}). Because the variables of $\vec{y}_{M_j}$ do not appear in any $Z_k$ (by the assumption mentioned after the definition of $\alpha^\sigma_{(\vec{Z}_K,\vec{\bar{Z}}_K) ri}$), we have $v[Z_k]=v'[Z_k]$ for all $k\in\{1,\ldots,K\}$, and similar for $\bar{Z}_k$. 

So, suppose $\forall$ chooses $\vec{a}_{M_j}\in A$ and plays the move $(\tau_j,\vec{a}_{M_j})$. We define $v'$ to be $v$ except that $v'(y_m)= a_m$ for all $m\in\{1,\ldots,M_j\}$. As discussed above, there is $f$ such that $A,v'\models\psi^j_{\Delta_K(\vec{Z}_K,f)} \wedge \mathsf{D}_{\Delta_{\bar{K}}(\vec{Z}_K,\vec{\bar{Z}}_K,f)}$. So, appealing to Lemma \ref{L:pad} applied to $\psi^j_{\Delta_K(\vec{Z}_K,f)}$, there is a way, described by $f$, to assign each $a_m$ to either $v[Z_k]$ or $v[\bar{Z}_{k}]$ for each $k\in\{1,\ldots,K\}$ such that $A\models_{\sL^+} \psi^j$ if we interpret each $C_k$ as the modified $v[Z_k]$. Moreover, from the definition of $\mathsf{D}$, we see that each modified $v[Z_k]$ is disjoint from the modified $v[\bar{Z}_{k}]$, and so $\exists$ can survive the first round of the reduced $(A,\sigma)$-game with starting position $(v[Z_1],\ldots,v[Z_{K}],v[\bar{Z}_1],\ldots,v[\bar{Z}_{K}])$, so long as $\forall$ starts with a move involving $\tau_j$ for some $j\leq i$. This proves that (1)$\implies$(2) for $r=1$. 

For the converse, suppose (2) holds and that $A,v'\models \gamma^j(\vec{y}_{M_j})$ for some $j\leq i$ with $v'$ agreeing with $v$ about everything except, possibly, $\vec{y}_{M_j}$. Then $\exists$'s strategy tells us how to find $f\in F^K_{\vec{y}_{M_j}}$ appropriately. I.e. if $\exists$'s strategy assigns $v'(y_m)$ to $C_k$, then $f$ should assign $y_m$ to $Z_k$ (formally, $f(y_m,k)=1$), and if not it should assign $y_m$ to $\bar{Z}_{k}$ (formally, $f(y_m,k)=0$). With this $f$, if $Z'_k$ and $\bar{Z}'_k$ are, respectively, the modified $Z_k$ and $\bar{Z}_k$ for all $k\in\{1,\ldots,K\}$, we have $v'[Z'_k]\cap v'[\bar{Z}'_{k}]=\emptyset$, and $A,v'\models_{\sL^+} \psi^j(\vec{y}_{M_j})$ if we interpret $C_k$ as $v'[Z'_k]$ for all $k\in \{1,\ldots,K\}$. From Lemma \ref{L:pad} it follows that $A,v'\models \psi^j_{\Delta_K(\vec{Z}_K, f)}\wedge \mathsf{D}_{\Delta_{\bar{K}}(\vec{Z}_K,\vec{\bar{Z}}_K ,f)}$. So (2)$\implies$(1). 

For the inductive step, let $1< R<\omega$ and suppose the claim is true for all $1\leq r < R$. Then, by the inductive hypothesis, and appealing to similar reasoning as used for the base case, $A, v\models \alpha^\sigma_{(\vec{Z}_K,\vec{\bar{Z}}_K) Ri}$ if and only if, whatever move involving $\tau_j$ for $j\leq i$ $\forall$ plays, $\exists$ can respond in such a way that she has an $(R-1)$-strategy in the game whose starting position corresponds to her response. But this is the same as saying that $\exists$ has an $R$-strategy as claimed.
\end{proof}

The formulas $\alpha^\sigma_{(\vec{Z}_K,\vec{\bar{Z}}_K) ri}$ we have just defined let us deal with reduced $(A,\sigma)$-games, but we need more to handle the full games. With that in mind we now define formulas where round zero is included. These will include the $\alpha$ formulas as subformulas.

\begin{enumerate}[$\bullet$]
\item Let $\sigma\in \Sigma^{>0}$, and for all $i\in \omega$, for all finite $(\vec{Z}_K,\vec{\bar{Z}}_K)$, and for all $0\leq r<\omega$,  define
\[\beta^\sigma_{(\vec{Z}_K,\vec{\bar{Z}}_K)ri} = \forall \vec{x}_N\Big(\mu(\vec{x}_N)\ra  \bv_{f\in F^K_{\vec{x}_N}} \big(\eta_{\Delta_K(\vec{Z}_K, f)}\wedge \alpha^\sigma_{\Delta_{\bar{K}}(\vec{Z}_K,\vec{\bar{Z}}_K ,f)ri}\big)\Big).\]
Here $\eta_{\Delta_K(\vec{Z}_K, f)}$ is constructed from $\eta$ as in Lemma \ref{L:pad}. We assume that the variables in $\vec{x}_N$ do not appear in any subformula not involving $\mu$. The formula $\beta^\sigma_{(\vec{Z}_K,\vec{\bar{Z}}_K)ri}$ is intended to say that, whatever move $\forall$ makes in round zero of the $(A,\sigma)$-game with starting position $(\vec{Z}_K,\vec{\bar{Z}}_K)$, \/ $\exists$ can respond in such a way that she survives this opening round, and can survive in the resulting reduced game for at least $r$ rounds, so long as $\forall$ only plays moves involving $\tau_j$ for $j\leq i$. Lemma \ref{L:full} makes this precise.
\end{enumerate}

\begin{lemma}\label{L:full}
Let $1\leq K<\omega$, let $A\in\cA$, let $\cB$ be a separation subclass of $\cA$ defined by the essentially r.e. separation scheme $\Sigma$, and let $\sigma\in \Sigma$ be a separation rule of order $K$. Then for all $(\vec{Z}_K,\vec{\bar{Z}}_K)$, for all assignments $v$, for all $i\in I$, and for all $r\in \omega$, the following are equivalent:
\[\tag{1}A,v\models \beta^\sigma_{(\vec{Z}_K,\vec{\bar{Z}}_K)ri}.\]
\begin{align*}\tag{2}&\exists \text{ has an  $r$-strategy in the $(A,\sigma)$-game with starting position} \\ 
&\hspace{1cm}(v[Z_1],\ldots,v[Z_{K}],v[\bar{Z}_1],\ldots,v[\bar{Z}_K])\\ 
&\text{ where $\forall$ can only play moves involving $\tau_j$ when  $j\leq i$.} 
\end{align*} 
\end{lemma}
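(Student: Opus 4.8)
The plan is to prove this directly, with no further induction on $r$: Lemma \ref{L:reduced} already supplies the equivalence for the reduced game at every $r$, so the only new work is to paste round zero on top of it. I read $\beta^\sigma_{(\vec{Z}_K,\vec{\bar{Z}}_K)ri}$ off its definition as a faithful transcription of round zero: the block $\forall\vec{x}_N(\mu(\vec{x}_N)\ra\cdots)$ ranges over $\forall$'s legal opening moves, the disjunction $\bv_{f\in F^K_{\vec{x}_N}}$ is $\exists$'s choice of how to place those elements among the $C_k$ and their complements, the conjunct $\eta_{\Delta_K(\vec{Z}_K,f)}$ is round-zero condition (1), and the conjunct $\alpha^\sigma_{\Delta_{\bar{K}}(\vec{Z}_K,\vec{\bar{Z}}_K,f)ri}$ asserts that from the position produced by that placement $\exists$ has an $r$-strategy in the reduced game. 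Structurally round zero is just a single reduced-game round with $\mu,\eta$ in place of $\gamma^j,\psi^j$, so the argument mirrors the base case of Lemma \ref{L:reduced}.

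For (1)$\Rightarrow$(2), I would assume $A,v\models\beta^\sigma_{(\vec{Z}_K,\vec{\bar{Z}}_K)ri}$ and extract $\exists$'s $r$-strategy. Given $\forall$'s opening move $a_1,\ldots,a_N$ with $A\models\mu(\vec{a}_N)$, let $v'$ agree with $v$ except that $v'(x_n)=a_n$; since $A,v'\models\mu(\vec{x}_N)$, the formula hands us an $f\in F^K_{\vec{x}_N}$ with $A,v'\models\eta_{\Delta_K(\vec{Z}_K,f)}\wedge\alpha^\sigma_{\Delta_{\bar{K}}(\vec{Z}_K,\vec{\bar{Z}}_K,f)ri}$. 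I let $\exists$ place each $a_n$ as prescribed by $f$ (Definition \ref{D:delta}). Lemma \ref{L:pad} converts $A,v'\models\eta_{\Delta_K(\vec{Z}_K,f)}$ into $A\models_{\sL^+}\eta(\vec{a}_N)$ under the updated interpretation of the $C_k$, which is round-zero condition (1); round-zero condition (2) is delivered by the $\alpha$ conjunct (for $r=0$ it is literally $\mathsf{D}_{\Delta_{\bar{K}}(\vec{Z}_K,\vec{\bar{Z}}_K,f)}$, and for $r\geq1$ it sits at the base of the $\alpha$-recursion). Finally, Lemma \ref{L:reduced} turns $A,v'\models\alpha^\sigma_{\Delta_{\bar{K}}(\vec{Z}_K,\vec{\bar{Z}}_K,f)ri}$ into an $r$-strategy for the reduced game starting from the post-round-zero position $\Delta_{\bar{K}}(\vec{Z}_K,\vec{\bar{Z}}_K,f)$, and concatenating this with the opening response yields the required $r$-strategy in the full game.

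For (2)$\Rightarrow$(1), I would run the same correspondence backwards. To verify $\beta$, fix any $\vec{a}_N$ with $A\models\mu(\vec{a}_N)$, treat it as $\forall$'s opening move, and let $\exists$'s assumed $r$-strategy dictate her response, defining $f$ by setting $f(x_n,k)=1$ exactly when her strategy places $a_n$ into $C_k$. Her surviving round zero gives both $A\models_{\sL^+}\eta(\vec{a}_N)$ and the disjointness of the updated sets, whence $A,v'\models\eta_{\Delta_K(\vec{Z}_K,f)}$ by Lemma \ref{L:pad} and the relevant disjointness by the definition of $\mathsf{D}$; the remainder of her play is an $r$-strategy in the reduced game from the resulting position, so Lemma \ref{L:reduced} gives $A,v'\models\alpha^\sigma_{\Delta_{\bar{K}}(\vec{Z}_K,\vec{\bar{Z}}_K,f)ri}$. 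Thus the $f$-disjunct holds for this opening move, and as $\vec{a}_N$ was arbitrary we conclude $A,v\models\beta^\sigma_{(\vec{Z}_K,\vec{\bar{Z}}_K)ri}$.

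The step I expect to be the main obstacle is the bookkeeping at the round-zero/reduced-game interface. First, I must insist that the variables of $\vec{x}_N$ are fresh and disjoint from every tracking set $Z_k,\bar{Z}_k$, so that passing from $v$ to $v'$ leaves $v[Z_k]$ and $v[\bar{Z}_k]$ unchanged and the position $\Delta_{\bar{K}}(\vec{Z}_K,\vec{\bar{Z}}_K,f)$ handed to $\alpha$ is exactly the game position after $\exists$'s opening placement. The genuinely delicate point is matching round-zero disjointness (condition (2)) to the formula when $r\geq1$, since $\beta$ never conjoins $\mathsf{D}$ at the outer level and relies on it appearing only at the base of the $\alpha$-recursion; this has to be justified using the monotonicity of the sets (they only grow as play proceeds, so a later disjointness check subsumes the earlier ones) together with the fact that $\eta_{\Delta_K(\vec{Z}_K,f)}$ is evaluated on the \emph{updated} sets rather than on the newly added elements alone.
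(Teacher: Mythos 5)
Your proposal is correct and follows essentially the same route as the paper's proof: unwind $\beta^\sigma_{(\vec{Z}_K,\vec{\bar{Z}}_K)ri}$ as a transcription of round zero, pass from $v$ to $v'$ using the freshness of $\vec{x}_N$, read $\exists$'s placement off the witnessing $f$ (and conversely define $f$ from her strategy), and invoke Lemma \ref{L:pad} for the $\eta$-condition and Lemma \ref{L:reduced} for the residual reduced game. The disjointness subtlety you flag at the end is genuine --- the paper's own proof passes over round-zero condition (2) in silence --- and your resolution via monotonicity of the tracking sets together with the $\mathsf{D}$-formula at the base of the $\alpha$-recursion is the right way to close it.
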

\begin{proof}
(1) is the statement that whenever $v'$ is an assignment agreeing with $v$ about everything except, possibly, $\vec{x}_N$, if $A,v'\models \mu(\vec{x})$ then there is a way $\exists$ can assign the variables of $\vec{x}_N$ 
to $\vec{Z}_K,\vec{\bar{Z}}_K$ so that $A,v'\models \eta_{\vec{Z}'_K}$ and $A,v'\models \alpha^\sigma_{(\vec{Z}'_K,\vec{\bar{Z}}'_K)ri}$ for the resulting values $Z'_1,\ldots,Z'_{K},\bar{Z}'_1,\ldots,\bar{Z}'_{K}$. 

So, suppose $\forall$ plays $\vec{a}_N$ as an opening move. Define $v'$ to be like $v$ except that $v'(x_n) = a_n$ for all $n\in\{1,...,N\}$. By the assumption that $\vec{x}_N$ involves only variables not appearing in subformulas not involving $\mu$, we have $v[Z_k]=v'[Z_k]$ for all $k$. Then $A,v'\models \mu(\vec{x}_N)$, and by the preceding paragraph, there is a way $\exists$ can assign variables from $\vec{x}_N$ so that $A,v'\models \eta_{\vec{Z}'_K}$ and $A,v'\models \alpha^\sigma_{(\vec{Z}'_K,\vec{\bar{Z}}'_K)ri}$. So, for her response, $\exists$ adds $a_n$ to $v[Z_k]$ if $x_n$ is added to $Z_k$, and otherwise adds it to $v[\bar{Z}_k]$. Since $A,v'\models \eta_{\vec{Z}'_K}$, it follows from Lemma \ref{L:pad} that $\exists$ can survive the opening round, and since $A,v'\models \alpha^\sigma_{(\vec{Z}'_K,\vec{\bar{Z}}'_K)ri}$, it follows from Lemma \ref{L:reduced} that she has an $r$-strategy in the resulting reduced game. Thus (1)$\implies$(2).

Conversely, if $\exists$ has an $r$-strategy in the game, if $v'$ is an assignment agreeing with $v$ about everything except possibly $\vec{x}_N$ and $A,v'\models \mu(\vec{x}_N)$, then this strategy implies the existence of a suitable $f$. Explicitly, $f(x_n,k)=1$ if $v'(x_n)$ is added to $v[Z_k]$, and $f(x_n,k)=0$ otherwise. Thus (2)$\implies$(1) as required.
\end{proof}

The formulas $\beta^\sigma_{(\vec{Z}_K,\vec{\bar{Z}}_K)ri}$ defined above say that $\exists$ has an $r$ strategy in a constrained $(A,\sigma)$-game with a specified starting position. What we want for our main result are formulas stating that this is true for the `simple' starting position. I.e. where the sets all start off empty. We also want to cover separation rules of order zero. With is in mind, we proceed as follows. 
\begin{enumerate}[$\bullet$]
\item For all $\sigma\in \Sigma$, for all $i\in \omega$ and for all $r\in\omega$,  define
\[\hat{\beta}^\sigma_{ri} = \begin{cases} \beta^{\sigma}_{(\vec{\emptyset}_K,\vec{\emptyset}_K)ri} \text{ if } \sigma\in\Sigma^{>0} \text {has order $K$},\\
\sigma \text{ otherwise}.
\end{cases}\]
\end{enumerate}
Here $\vec{\emptyset}_K$ stands for $\vec{Z}_K$ where $Z_k=\emptyset$ for all $k$. So, given any $\sigma\in\Sigma^{>0}$, and given any $r,i\in\omega$, the first-order sentence $\hat{\beta}^\sigma_{ri}$ states that $\exists$ has an $r$-strategy in the simple $(A,\sigma)$-game in which $\forall$ can only play moves involving $\tau_j$ for $j\leq i$. Moreover, if $\Sigma$ is essentially r.e., the set $\{\hat{\beta}^\sigma_{ri}: \sigma\in \Sigma, i,r\in\omega\}$ can be made recursively enumerable.   

This brings us to the following theorem, which is the main result of this section, and is a considerable generalisation of \cite[Theorem 4.5]{HirMcL17} and \cite[Theorem 5.6]{Egr19}. Nevertheless, the key ingredients of the proofs are essentially the same.

\begin{thm}\label{T:main}
Let $\cA$ be an elementary class of $\sL$-structures, let $A\in\cA$, and let $\cB$ be a separation subclass of $\cA$ defined by the essentially r.e. separation scheme $\Sigma$.  Then 
\[A\in \cB\iff A\models \hat{\beta}^\sigma_{ri} \text{ for all }\sigma\in \Sigma \text{ and for all } r,i\in \omega.\]
\end{thm}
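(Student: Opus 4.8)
The statement is a biconditional and I would treat the two directions separately; throughout write $\Theta=\{\hat\beta^\sigma_{ri}:\sigma\in\Sigma,\ r,i\in\omega\}$. For the forward direction, suppose $A\in\cB$. The order-zero rules of $\Sigma$ are ordinary $\sL$-sentences, and for these $\hat\beta^\sigma_{ri}=\sigma$, which holds because $A\models\Sigma$. For $\sigma\in\Sigma^{>0}$, Proposition \ref{P:game}(1) gives $\exists$ an $\omega$-strategy in the simple $(A,\sigma)$-game; such a strategy is in particular an $r$-strategy for every $r$, and it remains one when $\forall$ is restricted to moves involving $\tau_j$ with $j\le i$ (restricting $\forall$ only helps $\exists$). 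By Lemma \ref{L:full} applied at the simple (empty) starting position, this is exactly the assertion $A\models\beta^\sigma_{(\vec{\emptyset}_K,\vec{\emptyset}_K)ri}=\hat\beta^\sigma_{ri}$. Hence $A\models\Theta$.

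The substance is the converse: $A\in\cA$ with $A\models\Theta$ implies $A\in\cB$. Here the plan is to exploit Theorem \ref{T:same}: since $\cA$ is elementary, $\cB$ is elementary relative to $\cA$, so I fix a first-order theory $\Phi$ with $\cB=\{A\in\cA:A\models\Phi\}$. As $\Phi$ is first-order it suffices to produce a single $A_0\equiv A$ with $A_0\in\cB$, for then $A_0\models\Phi$ forces $A\models\Phi$ and hence $A\in\cB$. Applying downward Löwenheim--Skolem I take a countable elementary substructure $A_0\preceq A$; then $A_0\in\cA$ (as $\cA$ is closed under elementary substructures), $A_0\equiv A$, and $A_0\models\Theta$ since $\Theta$ is first-order. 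Because $\Sigma$ is essentially r.e.\ it is essentially countable, so by Proposition \ref{P:game}(2) it is enough to verify that $A_0$ satisfies the order-zero rules (immediate, as these lie in $\Theta$) and that $\exists$ has an $\omega$-strategy in every simple $(A_0,\sigma)$-game with $\sigma\in\Sigma^{>0}$.

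The crux is thus to manufacture an $\omega$-strategy out of the finitary $r$-strategies handed to us by $A_0\models\hat\beta^\sigma_{ri}$ through Lemma \ref{L:full}. This cannot be done by König's lemma on the game tree, since $\forall$ branches infinitely; instead I would run a compactness argument on the space of assignments. Fixing $\sigma$ of order $K$ and a tuple $\vec a$ with $A_0\models\mu(\vec a)$, consider the space $(\{0,1\}^{A_0})^K$ of all interpretations of $C_1,\dots,C_K$ as subsets of $A_0$, which is compact by Tychonoff. The demand $\eta(\vec a)$, and each instance $\psi^i(\vec b)$ of a closure rule (for $\vec b$ with $\gamma^i(\vec b)$), cut out clopen subsets, each depending on the membership of only finitely many elements. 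This family has the finite intersection property: given finitely many conditions, involving $\tau_j$ for $j\le i$ and at most $m$ closure instances, Lemma \ref{L:full} together with $A_0\models\hat\beta^\sigma_{mi}$ lets $\exists$ survive the play in which $\forall$ opens with $\vec a$ and then plays exactly those instances, and the sets she builds realise the chosen conditions simultaneously. Compactness then yields a single global assignment $C_1,\dots,C_K$ satisfying $\eta(\vec a)$ and every instance of every $\tau_i$; the positional rule ``place a played element in $S_k$ precisely when it lies in $C_k$'' is then an $\omega$-strategy. As this works for every admissible $\vec a$, Proposition \ref{P:game}(2) gives $A_0\in\cB$, and the transfer of the previous paragraph finishes the proof.

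The main obstacle is precisely this finite-to-infinite passage: a naive attempt to amalgamate $r$-strategies for growing $r$ fails, and the key is that it is compactness of the \emph{assignment} space, not of the move tree, that converts the finitary survival data encoded by the sentences $\hat\beta^\sigma_{ri}$ into a genuine global witness. A secondary point needing care is the bookkeeping of Lemma \ref{L:pad}: since the sets $S_k$ only grow during a play, one must confirm that the membership of the finitely many elements relevant to a given clopen condition is settled once decided, so the final assignment still realises it; this is transparent for relational $\sL$, where each term is a variable assigned when first played, and is handled through the term translation of Lemma \ref{L:pad} in general. Finally, Theorem \ref{T:same} is exactly what permits passing from the countable $A_0$ back to the arbitrary $A$, which is the precise sense in which the prior knowledge that an axiomatisation exists is used to certify the explicit axiomatisation $\Theta$.
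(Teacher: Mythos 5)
Your proof is correct, but the heart of the converse runs along a genuinely different line from the paper's. The forward direction and the reduction to a countable elementary substructure via downward L\"owenheim--Skolem and Theorem \ref{T:same} are exactly as in the paper. For the crux --- converting the $r$-strategies supplied by $A_0\models\hat\beta^\sigma_{ri}$ into membership in $\cB$ --- the paper argues contrapositively: if $\exists$ lacks an $\omega$-strategy for some simple $(A,\sigma)$-game (which Proposition \ref{P:game}(2) forces when a countable $A\notin\cB$ satisfies the order-zero rules), K\"onig's Tree Lemma makes a suitable game tree finite, bounding the round number $r$ and the largest index $i$ used, so some $\hat\beta^\sigma_{ri}$ fails by Lemma \ref{L:full}. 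Your worry that the game tree branches infinitely on $\forall$'s side is exactly what makes that step terse (it is resolved by applying K\"onig to the finitely branching tree of plays against a winning strategy for $\forall$, where only $\exists$'s finitely many responses branch), and your replacement is sound and arguably cleaner: Tychonoff compactness of $(\{0,1\}^{A_0})^K$ with the finite intersection property supplied, via Lemma \ref{L:full}, by an $m$-round play against exactly the finitely many relevant $\forall$-moves. This buys more than you use: it manufactures the witnessing sets $C_1,\dots,C_K$ directly and hence gives $A_0\models\sigma$ outright, so the detour through Proposition \ref{P:game}(2) is redundant --- and since $\{0,1\}^{A}$ is compact for $A$ of any cardinality, the L\"owenheim--Skolem reduction and the appeal to Theorem \ref{T:same} could be dispensed with in the converse as well. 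The term-stability point you flag (a literal $C_k(t(\vec b))$ for a compound term whose value $\exists$ was never asked to classify need not survive extension of the partial sets to a total assignment) is a genuine subtlety, but it is shared with the paper's own argument for Proposition \ref{P:game}(2), is vacuous for relational signatures, and is repairable in general, so it does not count against your proof specifically.
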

\begin{proof}
If $A\in \cB$ then, for all $\sigma=\forall\vec{x}_N\Big(\mu(\vec{x}_N)\rightarrow \exists \vec{C}_K(\eta(\vec{x})\wedge \tau)\Big)\in \Sigma^{>0}$, we can use the predicates $C_k$ for $k\in \{1,\ldots, K\}$ to guide the strategy of $\exists$ in the appropriate games. By Proposition \ref{P:game}, $\exists$ has an $\omega$-strategy in every simple $(A,\sigma)$-game for $\sigma\in\Sigma^{>0}$, and thus $A\models \beta^{\sigma}_{(\vec{\emptyset}_K,\vec{\emptyset}_K)ri}$ for all $\sigma\in\Sigma^{>0}$ and for all $r,i\in\omega$, by Lemma \ref{L:full}. If $\sigma\in\Sigma\setminus\Sigma^{>0}$ then $\sigma= \hat{\beta}^\sigma_{ri}$, and so it follows immediately that $A\models \hat{\beta}^\sigma_{ri}$ for all $\sigma,r,i$.

Conversely, suppose first that $A$ is countable and that $A\notin \cB$. Then, either $A\not\models \sigma$ for some $\sigma\in\Sigma\setminus \Sigma^{>0}$, or, by Proposition \ref{P:game}, there is $\sigma\in \Sigma^{>0}$ such that $\exists$ does not have an $\omega$-strategy in the simple $(A,\sigma)$-game. In the former case we immediately have $A\not\models \hat{\beta}^\sigma_{ri}$, just by definition of $\hat{\beta}^\sigma_{ri}$, so we consider the latter.  It follows from K\"onig's Tree Lemma \cite{Kon26} that some game tree for the simple $(A,\sigma)$-game is finite (otherwise $\exists$ would have a strategy defined using an infinite branch). There are only a finite number of $\forall$ moves in this game tree, and so, if $i\in \omega$ is the largest index of a $\tau_i$ used in a move by $\forall$ in this tree, we have $A\not\models \hat{\beta}^\sigma_{ri}$ for some $r\in\omega$, by Lemma \ref{L:full}.

Now, suppose $A$ is uncountable, and suppose also that $A\models \hat{\beta}^\sigma_{ri} $ for all $\sigma\in \Sigma$ and for all $r,i\in \omega$. Then, by the downward L\"owenheim-Skolem Theorem, $A$ has a countable elementary substructure, $A'$, and, as $A'\models \hat{\beta}^\sigma_{ri} $ for all $\sigma,i,r$, it follows from our proof of the countable case that $A'\in \cB$. Moreover, by Theorem \ref{T:same}, $\cB$ is elementary, so $A'$ is a model of the elementary theory defining $\cB$. But $A$ and $A'$ are elementarily equivalent, so $A$ is also a model of this theory, and thus $A\in \cB$ as claimed.  
\end{proof} 

\begin{cor}\label{C:main}
Let $\cA$ be an elementary class of $\sL$-structures, let $A\in\cA$, and let $\cB$ be an essentially r.e. separation subclass of $\cA$ defined by the separation scheme $\Sigma$.  Then $\cB$ has a recursive axiomatisation relative to $\cA$.

Moreover, if for every $\sigma = \forall\vec{x}_N(\mu(\vec{x}_N)\rightarrow \exists \vec{C}_K(\eta(\vec{x}_N)\wedge \bw_I\tau_i))\in \Sigma^{>0}$, the prenex normal form of $\mu$ contains no universal quantifiers, and, in addition, for every conjunct $\tau_i=\forall \vec{y}_{M_i}(\gamma_i(\vec{y}_{M_i})\rightarrow \psi_i(\vec{y}_{M_i}))$ of $\tau$,  the prenex normal form of $\gamma_i$ contains no universal quantifiers, then, so long as the prenex normal form of every $\sigma\in\Sigma\setminus\Sigma^{>0}$ is universal, there is a recursive universal axiomatisation of $\cB$ relative to $\cA$. 
\end{cor}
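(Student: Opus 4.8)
The plan is to derive both claims directly from Theorem \ref{T:main}, together with the observation recorded just before it: when $\Sigma$ is essentially r.e., the set $\Phi = \{\hat{\beta}^\sigma_{ri} : \sigma\in\Sigma,\ r,i\in\omega\}$ is recursively enumerable. By Theorem \ref{T:main} we have $A\in\cB \iff A\models\Phi$ for every $A\in\cA$, so $\Phi$ axiomatises $\cB$ relative to $\cA$. For the first claim it then remains only to pass from an r.e.\ to a recursive axiomatisation, which I would do via Craig's trick: enumerate $\Phi$ recursively as $\phi_0,\phi_1,\ldots$ and replace $\phi_n$ by the padded conjunction of $n+1$ copies of $\phi_n$. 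The resulting set is recursive and logically equivalent to $\Phi$, hence is a recursive axiomatisation of $\cB$ relative to $\cA$.

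For the ``moreover'', the strategy is to show that under the stated hypotheses every $\hat{\beta}^\sigma_{ri}$ is logically equivalent to a universal sentence. Granting this, $\Phi$ becomes an r.e.\ set of universal sentences, and Craig's trick can be arranged to preserve universal form (for instance by first taking prenex normal forms and then padding the quantifier-free matrix with tautological conjuncts to encode the index), thereby producing a recursive set of universal sentences equivalent to $\Phi$. So the heart of the argument is a structural induction on the recursive definition of the $\alpha$ and $\beta$ formulas, verifying that each is equivalent to a universal formula.

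I would run the induction as follows. The base case $\alpha^\sigma_{(\vec{Z}_K,\vec{\bar{Z}}_K)0i}=\mathsf{D}_{(\vec{Z}_K,\vec{\bar{Z}}_K)}$ is quantifier-free, hence universal. For the step on $\alpha^\sigma_{(\vec{Z}_K,\vec{\bar{Z}}_K)ri}$, note that $\psi^j_{\Delta_K(\vec{Z}_K,f)}$ is quantifier-free by Lemma \ref{L:pad} and $\alpha^\sigma_{\Delta_{\bar{K}}(\vec{Z}_K,\vec{\bar{Z}}_K,f)(r-1)i}$ is universal by the inductive hypothesis, so each disjunct is universal; since $F^K_{\vec{y}_{M_j}}$ is finite, $\bv_{f}$ is a finite disjunction of universal formulas, hence universal after renaming bound variables (using $\forall\vec{u}\,\theta\vee\forall\vec{w}\,\theta'\equiv\forall\vec{u}\forall\vec{w}(\theta\vee\theta')$). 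The antecedent $\gamma^j$ occurs negatively, and by hypothesis has no universal quantifier in prenex form, i.e.\ $\gamma^j\equiv\exists\vec{w}\,\gamma^j_0$ with $\gamma^j_0$ quantifier-free, so $\neg\gamma^j\equiv\forall\vec{w}\,\neg\gamma^j_0$ is universal and $\gamma^j\to(\text{universal})$ is universal; prefixing $\forall\vec{y}_{M_j}$ and conjoining over the finite range $j\leq i$ preserves universality. The same analysis applied to $\beta^\sigma_{(\vec{Z}_K,\vec{\bar{Z}}_K)ri}$, using that $\eta_{\Delta_K(\vec{Z}_K,f)}$ is quantifier-free by Lemma \ref{L:pad} and that $\mu$ has no universal quantifier in prenex form so that $\neg\mu$ is universal, shows $\beta^\sigma_{(\vec{Z}_K,\vec{\bar{Z}}_K)ri}$, and therefore $\hat{\beta}^\sigma_{ri}$ for $\sigma\in\Sigma^{>0}$, is universal. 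For $\sigma\in\Sigma\setminus\Sigma^{>0}$ we have $\hat{\beta}^\sigma_{ri}=\sigma$, which is universal by hypothesis.

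The main obstacle I anticipate is the careful bookkeeping of quantifier polarity through the nested recursion: one must track that the antecedents $\mu$ and each $\gamma^j$ occur in negative position, so that ``no universal quantifier in prenex form'' becomes ``universal after negation'', while the positively-occurring material ($\eta$, each $\psi^j$, and the recursively generated $\alpha$) is kept universal by Lemma \ref{L:pad} and the inductive hypothesis. A secondary point needing care is confirming that the finite operations $\bv_{f\in F^K}$ and $\bw_{j\leq i}$ genuinely preserve a universal prefix after suitable renaming of bound variables, and that the version of Craig's trick used really can be made to output (literally) universal sentences.
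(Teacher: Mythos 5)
Your proposal is correct and takes essentially the same approach as the paper: Craig's trick for the first claim, and an induction on the recursive structure of the $\alpha$ and $\beta$ formulas tracking quantifier polarity for the second. The only cosmetic difference is that the paper runs the induction in contrapositive form (assuming an existential quantifier survives to prenex normal form and pushing the contradiction down to the quantifier-free base case $\alpha^\sigma_{(\vec{Z}_K,\vec{\bar{Z}}_K)0i}$), whereas you build up universality directly; your explicit attention to making Craig's trick preserve universal form is a point the paper glosses over.
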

\begin{proof}
Since $\Sigma$ is essentially r.e. the set $\mathcal T=\{\hat{\beta}^\sigma_{ri}:\sigma\in \Sigma,r,i\in \omega\}$ can be assumed to be recursively enumerable. By Theorem \ref{T:main}, we know $\mathcal T$ axiomatises $\cB$ relative to $\cA$, and by Craig's trick, any class with an r.e. axiomatisation relative to a superclass also has a recursive axiomatisation relative to that superclass (see, for example, \cite[Exercise 6.3.1]{Hodg93}).

Now, let $\sigma=\forall\vec{x}_N\Big(\mu(\vec{x}_N)\rightarrow \exists \vec{C}_K(\eta(\vec{x}_N)\wedge \bw_I\tau_i)\Big)\in\Sigma^{>0}$, let $\hat{\beta}^\sigma_{ri}\in\mathcal T$ be one of the generated $\sL$-sentences axiomatising $\cB$ relative to $\cA$. I.e. \[\hat{\beta}^\sigma_{ri}=\forall \vec{x}_N\Big(\mu(\vec{x}_N)\ra  \bv_{f\in F^K_{\vec{x}_N}} \big(\eta_{\Delta_K(\vec{\emptyset}_K, f)}\wedge \alpha^\sigma_{\Delta_{\bar{K}}(\vec{\emptyset}_K,\vec{\emptyset}_K ,f)ri}\big)\Big).\]
 Suppose $\hat{\beta}^\sigma_{ri}$ is not logically equivalent to a universal $\sL$-sentence. Then, in particular the prenex normal form of $\hat{\beta}^\sigma_{ri}$ contains an existential quantifier. Note that $\mu$ is the antecedent of an implication, so if the prenex normal form of $\mu$ contains no universal quantifiers, then, as $\eta$ is quantifier-free, this implies there must be $f\in F^K_{\vec{x}_N}$ such that the prenex normal form of $\alpha^\sigma_{\Delta_{\bar{K}}(\vec{\emptyset}_K,\vec{\emptyset}_K ,f)ri}$ contains an existential quantifier. This requires that $r\geq 1$, as when $r=0$ this formula is just $\mathsf{D}_{\Delta_{\bar{K}}(\vec{\emptyset}_K,\vec{\emptyset}_K,f)}$. 

Now, by definition, for all $\vec{Z}_k$ and $\vec{\bar{Z}}_K$ we have  
\[\alpha^\sigma_{(\vec{Z}_K,\vec{\bar{Z}}_K) ri}=\bw_{j\leq i} \forall \vec{y}_{M_j}\Big(\gamma_j(\vec{y}_{M_j})\ra \bv_{f\in F^K_{\vec{y}_{M_j}}} \big(\psi^j_{\Delta_K(\vec{Z}_K,\vec{\bar{Z}}_K, f)}\wedge \alpha^\sigma_{\Delta_{\bar{K}}(\vec{Z}_K,\vec{\bar{Z}}_K ,f)(r-1)i} \big)\Big).\] 
If for each $i$ the prenex normal form of $\gamma_i$ contains no universal quantifiers, then, as $\psi^i$ is quantifier-free for all $i$, for the prenex normal form of $\alpha^\sigma_{(\vec{Z}_K,\vec{\bar{Z}}_K) ri}$ to contain an existential quantifier it is necessary that the prenex normal form of $\alpha^\sigma_{\Delta_{\bar{K}}(\vec{Z}_K,\vec{\bar{Z}}_K ,f)(r-1)i}$ contains an existential quantifier for some $f\in F^K_{\vec{y}_{M_j}}$. But then the same argument applies to $\alpha^\sigma_{(\vec{Z}_K,\vec{\bar{Z}}_K) (r-1)i}$, and thus we conclude by induction that $\alpha^\sigma_{(\vec{Z}'_K,\vec{\bar{Z}}'_K) 0i}$ contains an existential quantifier for some $\vec{Z}'_K$ and $\vec{\bar{Z}}'_K$, but this is impossible, as it is quantifier-free by definition. 

This proves the claim, because it follows that provided the conditions are met, we can obtain a universal axiomatisation by putting every formula $\hat{\beta}^\sigma_{ri}$ into prenex normal form.    
\end{proof}

The following lemma articulates an essentially trivial but useful observation.

\begin{lemma}\label{L:finite}
Let $\cB$ be a separation subclass of $\cA$ defined using the essentially finite separation scheme $\Sigma$. Suppose there are natural numbers  $r'$ and $i'$ such that, for all $\sigma\in\Sigma^{>0}$ and for all $A\in\cA$, the following statement holds:  
\begin{enumerate}[$\bullet$]
\item \emph{If $\exists$ has an $r'$-strategy in the $(A,\sigma)$-game where $\forall$ uses moves with index at most $i'$, then she has an $\omega$-strategy in the usual $(A,\sigma)$-game.}
\end{enumerate}
Then the axiomatisations produced in the proof of Corollary \ref{C:main} are equivalent to a finite subset of themselves.
\end{lemma}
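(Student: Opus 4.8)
The plan is to exhibit an explicit finite subset of the generated axiomatisation and then show, using the hypothesis together with Lemma \ref{L:full}, that this subset already entails every remaining axiom relative to $\cA$. Recall from the proof of Corollary \ref{C:main} that $\cB$ is axiomatised relative to $\cA$ by $\mathcal T = \{\hat{\beta}^\sigma_{ri} : \sigma\in\Sigma,\ r,i\in\omega\}$. Since $\Sigma$ is essentially finite, $\Sigma$ itself is finite, so I would set
\[\mathcal T_0 = \{\hat{\beta}^\sigma_{r'i'} : \sigma\in\Sigma^{>0}\}\cup\{\sigma : \sigma\in\Sigma\setminus\Sigma^{>0}\},\]
where $r'$ and $i'$ are the natural numbers supplied by the hypothesis. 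This $\mathcal T_0$ is a finite subset of $\mathcal T$, and the goal is to show that $\mathcal T_0$ and $\mathcal T$ define the same subclass of $\cA$.

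The order-zero rules require nothing: for $\sigma\in\Sigma\setminus\Sigma^{>0}$ we have $\hat{\beta}^\sigma_{ri}=\sigma$ for all $r,i$, and $\sigma$ already lies in $\mathcal T_0$. The content is in the positive-order rules. First I would fix $\sigma\in\Sigma^{>0}$ and $A\in\cA$ with $A\models\hat{\beta}^\sigma_{r'i'}$. By Lemma \ref{L:full} (applied to the simple game, where $\hat{\beta}^\sigma_{r'i'}=\beta^\sigma_{(\vec{\emptyset}_K,\vec{\emptyset}_K)r'i'}$), this says precisely that $\exists$ has an $r'$-strategy in the simple $(A,\sigma)$-game in which $\forall$ is restricted to moves of index at most $i'$. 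The hypothesis now applies and yields that $\exists$ has an $\omega$-strategy in the full simple $(A,\sigma)$-game.

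The key observation to record next is that an $\omega$-strategy in the full game supplies, for every $r$ and every $i$, an $r$-strategy in the game in which $\forall$ is restricted to moves of index at most $i$: restricting the moves available to $\forall$ only reduces his options, so a strategy that lets $\exists$ survive indefinitely against all of $\forall$'s plays certainly lets her survive $r$ rounds against the restricted player, whatever $r$ and $i$ are. Appealing to Lemma \ref{L:full} a second time then gives $A\models\hat{\beta}^\sigma_{ri}$ for all $r,i\in\omega$. Thus, over $\cA$, the single sentence $\hat{\beta}^\sigma_{r'i'}$ entails every $\hat{\beta}^\sigma_{ri}$.

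Combining the two cases, any $A\in\cA$ with $A\models\mathcal T_0$ satisfies every member of $\mathcal T$; since trivially $\mathcal T_0\subseteq\mathcal T$, the two are equivalent relative to $\cA$, which is the assertion. The same finite set serves for the universal axiomatisation of Corollary \ref{C:main} as well, because passing to prenex normal form preserves logical equivalence and so does not disturb the entailments just established. The argument is short; the one point that needs care is the passage from a strategy in the constrained finite game to one in the full game, and this is exactly what the hypothesis is designed to provide, together with the dual remark that a single $\omega$-strategy dominates all the constrained finite approximations $\hat{\beta}^\sigma_{ri}$ simultaneously.
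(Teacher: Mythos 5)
Your proof is correct and follows essentially the same route as the paper: use Lemma \ref{L:full} to translate $\hat{\beta}^\sigma_{r'i'}$ into the existence of an $r'$-strategy in the index-restricted simple game, apply the hypothesis to upgrade this to an $\omega$-strategy, observe that an $\omega$-strategy dominates every restricted $r$-strategy, and translate back to conclude that $\hat{\beta}^\sigma_{r'i'}$ entails $\hat{\beta}^\sigma_{ri}$ for all $r,i$ over $\cA$, with finiteness of $\Sigma$ finishing the argument. You are somewhat more explicit than the paper (which compresses the order-zero case and the domination step), but there is no substantive difference.
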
 
\begin{proof}
By Lemma \ref{L:full}, given $\sigma\in\Sigma^{>0}$, \/ $\exists$ having an $r$-strategy in the $(A,\sigma)$-game bounded by $i$ for $A\in\cA$ is equivalent to saying that $A\models \hat{\beta}^\sigma_{ri}$. So if $r'$ and $i'$ exist as claimed we have $\hat{\beta}^\sigma_{r'i'}\models \hat{\beta}^\sigma_{ri}$ for all $r,i$. Thus by Theorem \ref{T:main} we have $A\in\cB\iff A\models \hat{\beta}^\sigma_{r'i'}$ for all $\sigma\in\Sigma$. The result follows as $\Sigma$ is finite.   
\end{proof}

\begin{ex}
Returning to Example \ref{E:poset2}, by Corollary \ref{C:main} we see that the class of representable posets has a recursive axiomatisation (as was proved in \cite{Egr19}). However, the universal quantifiers in the $\join$ and $\meet$ formulas mean that the axiomatisation produced is not universal. Indeed, the class of representable posets has no universal axiomatisation, as it is not closed under substructures (see \cite[Corollary 2.9]{Egr19}).
\end{ex}

We also note the following alternative approach to constructing a recursive axiomatisation for $\cB$ relative to $\cA$ when $\cB$ is an essentially r.e. separation subclass of $\cA$ and $\cA$ is elementary. \cite[Chapter 9]{HirHod02} provides a method for generating a recursive first-order axiomatisation for the elementary closure of any pseudoelementary class whose defining theory in the extended language is recursive. Since in the situation we are describing $\cB$ is elementary (by Theorem \ref{T:same}), the elementary closure is just the class itself, and, since we have an axiomatisation of $\cB$ as an essentially recursive pseudoelementary class by Lemma \ref{L:pseud}, this method can be applied to find a recursive axiomatisation for $\cB$. This method also produces a universal axiomatisation when $\cB$ is pseudouniversal, in the sense of \cite[Definition 9.1]{HirHod02}. 

Also of interest is the result presented as \cite[Theorem 9.14]{HirHod02}, where it is attributed to Mal'cev and Tarski. According to this theorem, every pseudoelementary class that is closed under ultraroots is elementary, and, moreover, if it is also closed under substructures it is universal. If the pseudoelementary theory is r.e. then so too will be the elementary, or universal, axiomatisations. Appropriate sets of axioms are defined, but not made explicit. The reader is directed to the discussion following \cite[Corollary 9.15]{HirHod02} for some comments on this.   

A notable advantage of the recursive axiomatisation generated in the proof of Theorem \ref{T:main} is that, as it has an explicit connection to $\exists$'s ability to survive in certain combinatorial games, it can give us some insight into the question of whether an essentially r.e. separation subclass $\cB$ of an elementary class $\cA$ is finitely axiomatisable relative to $\cA$. To understand how this works, let $T=\{\psi_0, \psi_1, \psi_2,\ldots\}$ be the recursive axiomatisation obtained from Theorem \ref{T:main}. Then, if $\cB$ is finitely axiomatisable relative to $\cA$, there must be some $K\in \omega$ with $A\models \bw_{k=0}^K\psi_k \implies A\models \psi_j$ for all $j\in \omega$, for all $A\in \cA$. So, to prove that no such finite axiomatisation exists, it suffices to construct, for each $K\in\omega$, a structure $A_K\in \cA$ such that $A_K\models \bw_{k=0}^K\psi_k$, but $A_K\not\models \psi_{K+1}$. 

Translating this back into the setting of games, for an essentially finite separation subclass the idea is to construct objects $A_r\in\cA$ such that $\exists$ has $r$-strategies for all simple $(A,\sigma)$-games, but not an $(r+1)$-strategy for at least one such game. The non-finite case is similar, but we must consider the maximum indices of allowed $\forall$ moves, and we also have to take the possibly infinite number of separation rules of order zero into account. Of course, the substance of any such proof is to be found in the constructions themselves, but this can be a useful approach, where it applies. For example, this method is essentially the engine of the proofs of the titular result of \cite{Egr18}, and the results of \cite[section 5]{HirMcL17}, though the work in these examples is phrased in terms of ultraproducts. Note that the argument as described here has an advantage over the originals as reasoning about properties of the ultraproduct is not required.  We present a simple application of this technique in Section \ref{S:N-colour}. 

\section{Expressive power and decision problems}\label{S:decide} 
To begin this section we organise our results on the expressive power of the formalism of separation subclasses vis-\`a-vis first-order logic.

\begin{lemma}\label{L:notFin}
There is a basic elementary class $\cA$, and an essentially finite separation subclass $\cB$ of $\cA$, such that $\cB$ is not finitely axiomatisable.
\end{lemma}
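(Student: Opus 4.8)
The plan is to take $\cA$ to be the class of all graphs and $\cB$ to be the class of $2$-colourable (equivalently, bipartite) graphs. Working in the signature $\sL=\{E\}$ with $E$ binary, $\cA$ is the class of models of the single sentence $\forall x\,\neg E(x,x)\wedge\forall xy\,(E(x,y)\to E(y,x))$, so $\cA$ is basic elementary. To exhibit $\cB$ as an essentially finite separation subclass of $\cA$, I would use the single separation rule of order $1$
\[\sigma=\exists C\,\forall y_1 y_2\Big(E(y_1,y_2)\to \big((C(y_1)\vee C(y_2))\wedge(\neg C(y_1)\vee\neg C(y_2))\big)\Big),\]
obtained by taking $N=0$ (so $\mu=\top$ and $\eta=\top$) together with a closure rule whose index set $I=\{1\}$ is a singleton, with $\gamma_1=E(y_1,y_2)$ and $\psi_1$ the quantifier-free consequent displayed above. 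A graph $G$ satisfies $\sigma$ exactly when there is a set $C$ of vertices meeting every edge in precisely one endpoint, i.e.\ when $C$ is one colour class of a proper $2$-colouring; thus $\cB=\{G\in\cA:G\models\sigma\}$ is the class of bipartite graphs. Since $\sigma$ is a single separation rule with finite $I$, the scheme $\Sigma=\{\sigma\}$ is essentially finite in the sense of Definition \ref{D:Srule}, and hence $\cB$ is an essentially finite separation subclass of $\cA$.

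It then remains to show $\cB$ is not finitely axiomatisable, and I would argue by contradiction using an ultraproduct of odd cycles. Suppose $\cB$ is finitely axiomatisable relative to $\cA$, say $\cB=\{G\in\cA:G\models\chi\}$ for a single sentence $\chi$ (take the conjunction of a finite axiomatisation). For each $n\geq 1$ let $G_n$ be the odd cycle $C_{2n+1}$; each $G_n$ is non-bipartite, so $G_n\models\neg\chi$. Fixing a non-principal ultrafilter $U$ on $\omega$ and forming $G=\prod_U G_n$, \L o\'s's theorem gives $G\models\neg\chi$, so $G\notin\cB$, i.e.\ $G$ is not bipartite. I would contradict this by showing directly that $G$ is bipartite.

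To see that $G$ is bipartite, I would invoke the characterisation, valid for arbitrary (possibly infinite) graphs via a spanning-tree parity colouring, that a graph is bipartite iff it contains no odd cycle. For each fixed odd $\ell\geq 3$, the existence of an $\ell$-cycle is expressed by a first-order sentence $\phi_\ell$, and the only $G_n$ satisfying $\phi_\ell$ is $G_n=C_\ell$ itself, since the sole cycle subgraph of a finite cycle is the whole cycle. Hence $\{n:G_n\models\phi_\ell\}$ is finite and therefore not in $U$, so by \L o\'s's theorem $G\models\neg\phi_\ell$. As this holds for every odd $\ell$, the ultraproduct $G$ has no odd cycle and is thus bipartite, contradicting the conclusion of the previous paragraph. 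Therefore no such $\chi$ exists and $\cB$ is not finitely axiomatisable.

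The syntactic checks that $\sigma$ is a genuine finite separation rule and that it defines bipartiteness are routine; the only step needing (standard) care is the combinatorial claim that the ultraproduct of the odd cycles is bipartite, which I would isolate as the main point and settle through the two observations above, namely that no fixed odd cycle survives the ultraproduct and that absence of odd cycles forces bipartiteness even for infinite graphs. I would also remark that this example is precisely the $N=2$ instance of the analysis of $N$-colourable graphs carried out later in Section \ref{S:N-colour}.
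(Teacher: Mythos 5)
Your proof is correct, but it takes a genuinely different route from the paper's. The paper proves Lemma \ref{L:notFin} in one line by pointing to the $(\alpha,\beta)$-representable posets of Example \ref{E:poset2}, which it has already exhibited as an essentially finite separation subclass of the (basic elementary) class of posets, and then citing the external result of \cite{Egr18} that this class is not finitely axiomatisable for $\alpha,\beta\geq 3$. You instead use bipartite graphs inside the class of all graphs and supply a self-contained non-finite-axiomatisability argument via a non-principal ultraproduct of the odd cycles $C_{2n+1}$; your key combinatorial steps (the only cycle subgraph of $C_m$ is $C_m$ itself, so no fixed odd-cycle sentence $\phi_\ell$ survives the ultraproduct, and absence of odd cycles implies bipartiteness even for infinite graphs via the parity colouring) are all sound, and the reduction from ``finitely axiomatisable'' to a single sentence $\chi$ with \L o\'s's theorem is handled correctly. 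Your approach buys self-containedness and a more elementary example, at the cost of having to do the combinatorial work; the paper's buys brevity by outsourcing the hard part to \cite{Egr18}. Your example is, as you note, exactly the $N=2$ case treated in Section \ref{S:N-colour}, where the paper gives a \emph{game-theoretic} proof that $\cC_2$ is not finitely axiomatisable (counting how many rounds $\exists$ survives on $C_{2n+1}$) precisely so as to avoid reasoning about ultraproducts --- so your argument is also an alternative to that later proof. One cosmetic point: Definition \ref{D:Srule} writes a positive-order separation rule as $\forall\vec{x}_N(\mu(\vec{x}_N)\to\cdots)$ and the paper's own instance in Section \ref{S:N-colour} uses a dummy quantified variable $\forall x(\top\to\cdots)$ rather than $N=0$; you should do the same (take $N=1$ with $\mu=\eta=\top$) to conform literally to the definition, which changes nothing in your argument.
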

\begin{proof}
We have shown that the class of $(\alpha,\beta)$-representable posets is an essentially finite separation subclass of the class of posets whenever $2\leq\alpha,\beta<\omega$ (see Example \ref{E:poset2}), and this class is also known to not be finitely axiomatisable for $\alpha,\beta\geq 3$ \cite{Egr18}.
\end{proof}

\begin{prop}\label{P:LclassesD}
Let $\cA$ be an elementary class and make the following definitions:
\begin{itemize}
\item[]$\mathbf{S}_\cA$ is the class of separation subclasses of $\cA$.
\item[]$\mathbf{RS}_\cA$ is the class of essentially r.e. separation subclasses of $\cA$.
\item[]$\mathbf{FS}_\cA$ is the class of essentially finite separation subclasses of $\cA$.
\item[]$\mathbf{E}_\cA$ is the class of elementary subclasses of $\cA$.
\item[]$\mathbf{RE}_\cA$ is the class of subclasses of $\cA$ with recursive axiomatisations relative to $\cA$.
\item[]$\mathbf{FE}_\cA$ is the class of subclasses of $\cA$ that are finitely axiomatisable relative to $\cA$.
\end{itemize}
Then Figure \ref{F:LincD} represents the class inclusions that always hold (with arrows from subclass to superclass). In cases where there is no arrow there are choices of $\cA$ for which the inclusion does not hold.
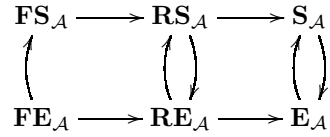
\begin{figure}[htbp]
\centerline{
\xymatrix{\mathbf{FS}_\cA\ar@{->}[r] & \mathbf{RS}_\cA \ar@{->}[r]\ar@/^/@{->}[d] & \mathbf{S}_\cA\ar@/^/@{->}[d] \\
\mathbf{FE}_\cA\ar@{->}[r]\ar@/^/@{->}[u] & \mathbf{RE}_\cA\ar@{->}[r]\ar@/^/@{->}[u] & \mathbf{E}_\cA\ar@/^/@{->}[u]}}
\caption{Class inclusions for separation subclasses}
\label{F:LincD}
\end{figure}
\end{prop}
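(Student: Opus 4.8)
The plan is to prove the statement in two halves: first, that every arrow in Figure \ref{F:LincD} is an inclusion valid for every elementary $\cA$; and second, that whenever two nodes are not joined by a directed path (so the inclusion is not already forced by transitivity) there is an elementary $\cA$ witnessing its failure. Each arrow is either immediate or a restatement of an earlier result. The horizontal arrows $\mathbf{FS}_\cA\to\mathbf{RS}_\cA\to\mathbf{S}_\cA$ and $\mathbf{FE}_\cA\to\mathbf{RE}_\cA\to\mathbf{E}_\cA$ hold by definition, since a finite scheme (resp.\ axiomatisation) is recursively enumerable and a recursively enumerable scheme is a scheme (resp.\ a recursive axiomatisation is an axiomatisation). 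The three upward arrows $\mathbf{E}_\cA\to\mathbf{S}_\cA$, $\mathbf{RE}_\cA\to\mathbf{RS}_\cA$ and $\mathbf{FE}_\cA\to\mathbf{FS}_\cA$ are exactly the three cases of Proposition \ref{P:AxSep}. The downward arrow $\mathbf{S}_\cA\to\mathbf{E}_\cA$ is Theorem \ref{T:same}, and the downward arrow $\mathbf{RS}_\cA\to\mathbf{RE}_\cA$ is Corollary \ref{C:main}; both of these use the hypothesis that $\cA$ is elementary.

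Next I would record two consequences. Because both $\mathbf{RS}_\cA\to\mathbf{RE}_\cA$ and $\mathbf{RE}_\cA\to\mathbf{RS}_\cA$ hold we get $\mathbf{RS}_\cA=\mathbf{RE}_\cA$, and likewise $\mathbf{S}_\cA=\mathbf{E}_\cA$. After these two identifications the whole diagram collapses to the chain $\mathbf{FE}_\cA\subseteq\mathbf{FS}_\cA\subseteq\mathbf{RS}_\cA\subseteq\mathbf{S}_\cA$, and a short piece of routine bookkeeping shows that, by transitivity together with these two equalities, every non-inclusion asserted by the absence of a path reduces to the claim that each of these three inclusions can be made proper for some elementary $\cA$. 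So it remains to produce three witnesses.

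For $\mathbf{FE}_\cA\subsetneq\mathbf{FS}_\cA$ I would simply invoke Lemma \ref{L:notFin}: for $3\leq\alpha,\beta<\omega$ the class of $(\alpha,\beta)$-representable posets is an essentially finite separation subclass of the posets that is not finitely axiomatisable. For $\mathbf{RS}_\cA\subsetneq\mathbf{S}_\cA$ (equivalently $\mathbf{RE}_\cA\subsetneq\mathbf{E}_\cA$) I would take $\cA$ to be the class of all structures in a signature with a unary predicate $P$ and constants $c_n$ for $n\in\omega$, fix a non-recursive set $D\subseteq\omega$, and let $\cB=\{A\in\cA: A\models\{P(c_n):n\in D\}\cup\{\neg P(c_n):n\notin D\}\}$. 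Then $\cB$ is elementary relative to $\cA$, but a recursive axiomatisation $S$ of $\cB$ relative to $\cA$ would satisfy $S\models P(c_n)$ exactly when $n\in D$ and $S\models\neg P(c_n)$ exactly when $n\notin D$; since the consequences of a recursive theory are recursively enumerable this would make both $D$ and its complement recursively enumerable, forcing $D$ to be recursive, a contradiction.

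The main obstacle is the middle link $\mathbf{FS}_\cA\subsetneq\mathbf{RS}_\cA$, because here the failure of an inclusion must be extracted from the expressive limits of essentially finite separation schemes rather than quoted. I would take $\cA$ to be the class of all structures in the signature $\{P_n:n\in\omega\}$ of countably many unary predicates, and set $\cB=\{A\in\cA: A\models\forall x(P_n(x)\ra P_{n+1}(x))\text{ for all }n\in\omega\}$. This $\cB$ has a recursive axiomatisation relative to $\cA$, so $\cB\in\mathbf{RE}_\cA=\mathbf{RS}_\cA$. To see $\cB\notin\mathbf{FS}_\cA$ I would first state and prove a short preliminary lemma: a finite separation scheme $\Sigma$ is a finite conjunction of finite second-order sentences and hence mentions only finitely many symbols of $\sL$, say those in a finite set $\sL_0$, so that $\{A\in\cA:A\models\Sigma\}$ is invariant under changing the interpretations of symbols outside $\sL_0$. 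Given this, choose $N$ so large that $P_{N+1}\notin\sL_0$, and compare the structure with every $P_n$ interpreted as the whole (two-element) domain, which lies in $\cB$, with the structure obtained by shrinking only $P_{N+1}$ so that $\forall x(P_N(x)\ra P_{N+1}(x))$ fails, which does not lie in $\cB$. These two structures have the same $\sL_0$-reduct, contradicting the supposed definability of $\cB$ by $\Sigma$; hence no essentially finite scheme defines $\cB$. The crux is precisely this finite-dependence lemma, which I expect to be the only genuinely non-routine ingredient.
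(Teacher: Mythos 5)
Your proof is correct and follows the same route as the paper: the arrows come from the definitions together with Proposition \ref{P:AxSep}, Theorem \ref{T:same} and Corollary \ref{C:main}, and the failure of $\mathbf{FS}_\cA\subseteq\mathbf{FE}_\cA$ from Lemma \ref{L:notFin}. The only difference is that the paper dismisses the remaining non-inclusions (the absent right-to-left arrows) as ``straightforward'', whereas you supply explicit witnesses --- the non-recursive set of constants for $\mathbf{RS}_\cA\subsetneq\mathbf{S}_\cA$ and the infinite-signature finite-dependence argument for $\mathbf{FS}_\cA\subsetneq\mathbf{RS}_\cA$ --- both of which check out.
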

\begin{proof}
The horizontal arrows come straight from the definitions, and the lack of backward arrows is also straightforward. The downward arrows come from Corollary \ref{C:main} and Theorem \ref{T:same}, and the upward arrows come from Proposition \ref{P:AxSep}. The lack of an arrow from $\mathbf{FS}_\cA$ to $\mathbf{FE}_\cA$ comes from Lemma \ref{L:notFin}.
\end{proof}

Now we present some easy results on the decision problem for separation subclasses.
\begin{defn}[Subclass decision problem]
Given classes $\cA$ and $\cB$ with $\cB\subseteq \cA$, the decision problem for $\cB$ relative to $\cA$ is the question:
\[\text{``Given a finite $A\in \cA$, is $A\in \cB$?"}\]
\end{defn}

\begin{prop}\label{P:semi}
Let $\cA$ be a class of $\sL$-structures, and let $\cB$ be an essentially r.e. separation subclass of $\cA$. Then the complement to the decision problem for $\cB$ relative to $\cA$ is semidecidable.
\end{prop}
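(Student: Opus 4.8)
The plan is to exhibit a semidecision procedure that halts on input a finite $A\in\cA$ precisely when $A\notin\cB$. Fix an essentially r.e. separation scheme $\Sigma=\{\sigma_n:n\in\omega\}$ defining $\cB$ relative to $\cA$. Since $A\in\cA$ is given as input, we have $A\notin\cB$ if and only if $A\not\models\sigma_n$ for some $n\in\omega$, so it suffices to show that, uniformly in $n$, the relation $A\not\models\sigma_n$ is semidecidable; the global procedure is then obtained by dovetailing over $n$.

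First I would dispose of the order-zero rules. If $\sigma_n$ is an $\sL$-sentence then, because $A$ is finite, the relation $A\not\models\sigma_n$ is outright decidable by finite model checking, hence in particular semidecidable.

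The substance is the positive-order case, where $\sigma_n=\forall\vec{x}_N(\mu(\vec{x}_N)\ra\exists\vec{C}_K(\eta(\vec{x}_N)\wedge\bw_I\tau_i))$ with the $\tau_i$ enumerable by an algorithm. Here I would unwind the failure condition: $A\not\models\sigma_n$ holds exactly when there is a tuple $\vec{a}_N\in A$ with $A\models\mu(\vec{a}_N)$ such that, for every interpretation of $\vec{C}_K$ as a tuple of subsets of $A$, either $A\not\models\eta(\vec{a}_N)$ or $A\not\models\tau_i$ for some $i\in I$. Because $A$ is finite, the tuples $\vec{a}_N$ and the interpretations of $\vec{C}_K$ each range over finite sets, and the predicates $A\models\mu(\vec{a}_N)$, $A\models\eta(\vec{a}_N)$, and $A\models\tau_i$ (for any fixed $i$) are all decidable. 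The only unbounded quantifier is the existential search for a failing $\tau_i$, which is semidecidable via the given enumeration of the $\tau_i$. Assembling the layers using the standard closure facts---that a finite disjunction and a finite conjunction of semidecidable predicates is again semidecidable (run the component procedures in parallel and halt when, respectively, one or all of them halt)---yields a semidecision procedure for $A\not\models\sigma_n$, uniformly in $n$.

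The point to take care over, and the reason decidability (rather than merely semidecidability) is not claimed, is the infinite conjunction $\bw_I\tau_i$: verifying that a fixed candidate $\vec{C}_K$ genuinely witnesses $\exists\vec{C}_K(\ldots)$ would require confirming $A\models\tau_i$ for \emph{all} $i\in I$, a co-r.e.\ task we cannot complete in finite time. Detecting failure, by contrast, only needs one offending $\tau_i$, and this asymmetry is exactly what makes the complement semidecidable. I expect the main obstacle to be bookkeeping the alternation of the finite (bounded) quantifiers over $\vec{a}_N$ and $\vec{C}_K$ against the single unbounded search, and in particular checking that the conjunction over interpretations of $\vec{C}_K$ terminates precisely when none of them is a genuine witness; but once the order-zero and positive-order cases are handled uniformly in $n$, the final dovetailing over $\Sigma$ is routine.
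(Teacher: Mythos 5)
Your proposal is correct and is essentially the paper's argument: both exploit the finiteness of $A$ to bound the tuples $\vec{a}_N$ and the interpretations of $\vec{C}_K$, leaving only the r.e.\ search through the enumerated $\tau_i$ for a refuting conjunct, and both dovetail over the rules of $\Sigma$. The paper packages your ``run the finitely many searches in parallel'' step as model-checking the finite truncations $\bw_{i\le n}\tau_i$ for increasing $n$ (and notes a shortcut via Corollary \ref{C:main} when $\cA$ is elementary), but the underlying procedure is the same as yours.
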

\begin{proof}
If $\cA$ is elementary then $\cB$ has a recursive axiomatisation relative to $\cA$, by Corollary \ref{C:main}, and the result follows immediately. 

Suppose now that $\cA$ is not elementary, and that $\cB$ is defined by the essentially r.e. separation scheme $\Sigma$. Our algorithm is as follows. Given finite $A\in\cA$, using dovetailing we work through the separation rules of $\Sigma$. If $\sigma$ is finite then it can be checked directly if $A\models \sigma$. If on the other hand 
\[\sigma=\forall\vec{x}_N\Big(\mu(\vec{x}_N)\rightarrow \exists \vec{C}_K(\eta(\vec{x}_N)\wedge \bw_I\tau_i)\Big)\] 
is infinite then it cannot be checked directly if $A\models \sigma$, but, as $\Sigma$ is essentially r.e., we can assume without loss of generality that $I=\omega$, and for each $n\in \omega$ we can define 
\[\sigma_n =\forall\vec{x}_N\Big(\mu(\vec{x}_N)\rightarrow \exists \vec{C}_K(\eta(\vec{x}_N)\wedge \bw_{i\leq n}\tau_i)\Big).\]
Now with dovetailing we can check if $A\models \sigma_n$ for each $n\in\omega$ and each infinite $\sigma \in \Sigma$. If $A\not\models \sigma$ then there is $n$ with $A\not\models \sigma_n$, so if such a $\sigma$ exists our algorithm will eventually find it. As soon as $\sigma$ with $A\not\models \sigma$ is found the algorithm terminates, as this shows $A\notin \cB$.
\end{proof}

\begin{lemma}\label{L:NP}
If $\cA$ is a class of $\sL$-structures and $\cB$ is an essentially finite separation subclass of $\cA$, then the decision problem for $\cB$ relative to $\cA$ is in $\mathbf{NP}$.
\end{lemma}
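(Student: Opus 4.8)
The plan is to exhibit a polynomial-time verifiable certificate witnessing membership $A \in \cB$ for a finite structure $A$. Let $\Sigma = \{\sigma_1, \ldots, \sigma_L\}$ be the essentially finite separation scheme defining $\cB$ relative to $\cA$, so $\Sigma$ is finite and each $\sigma \in \Sigma$ is itself finite. The separation rules of order zero are ordinary first-order $\sL$-sentences; since $A$ is finite, each such $\sigma$ can be checked directly in polynomial time (the quantifier depth is a fixed constant depending only on $\sigma$, not on $A$, so evaluation requires only polynomially many assignments). These rules contribute nothing to the nondeterminism, so the real content lies in the rules of positive order.

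First I would fix an arbitrary $\sigma \in \Sigma^{>0}$ of the form
\[\sigma = \forall\vec{x}_N\Big(\mu(\vec{x}_N)\rightarrow \exists \vec{C}_K\big(\eta(\vec{x}_N)\wedge \bw_I\tau_i\big)\Big),\]
where $I$ is finite since $\sigma$ is finite. The semantics of $\sigma$ demand that for \emph{each} tuple $\vec{a}_N \in A$ satisfying $\mu$, there exist interpretations of $C_1, \ldots, C_K$ as subsets of $A$ witnessing $\eta(\vec{a}_N) \wedge \bw_I \tau_i$. The idea is that the nondeterministic machine guesses, for each such tuple $\vec{a}_N$, the $K$ subsets $S_1^{\vec{a}_N}, \ldots, S_K^{\vec{a}_N} \subseteq A$ serving as the witnessing interpretations of the $C_k$. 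Since $A$ is finite, each subset is encoded by a bit-vector of length $|A|$, and the number of tuples $\vec{a}_N$ is at most $|A|^N$ with $N$ fixed, so the total certificate has size polynomial in $|A|$ (here $K$, $N$, and $\Sigma$ are all constants fixed by $\cB$, not part of the input).

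The verification step then checks, for each $\vec{a}_N$ with $A \models \mu(\vec{a}_N)$ and its guessed subsets, that $\eta(\vec{a}_N)$ holds and that $A \models \tau_i$ holds for every $i \in I$, all under the interpretation $C_k \mapsto S_k^{\vec{a}_N}$. Checking $\eta$ is immediate since it is quantifier-free. Each $\tau_i = \forall\vec{y}_{M_i}(\gamma_i(\vec{y}_{M_i}) \rightarrow \psi_i(\vec{y}_{M_i}))$ has bounded quantifier depth $M_i$, so verifying it amounts to iterating over at most $|A|^{M_i}$ assignments and evaluating a quantifier-free formula at each — polynomial time, since $M_i$ and $|I|$ are fixed constants. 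Aggregating over the finitely many $\sigma \in \Sigma$ and the polynomially many tuples $\vec{a}_N$ keeps the whole verification polynomial. If all checks pass, then interpreting each $C_k$ appropriately shows $A \models \sigma$ for every $\sigma$, hence $A \in \cB$; conversely, if $A \in \cB$, witnessing subsets exist by the semantics of $\Sigma$ and furnish an accepting certificate.

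The main obstacle, and the point requiring genuine care, is ensuring the witness data is genuinely \emph{polynomial} rather than merely finite: one must observe that although the $C_k$ may need to be chosen \emph{differently} for different tuples $\vec{a}_N$ (the existential $\exists \vec{C}_K$ sits inside the universal $\forall \vec{x}_N$), the number of relevant tuples is only $|A|^N$ with $N$ a fixed constant, so guessing a separate family of subsets per tuple stays within polynomial bounds. It is precisely the essential finiteness of $\Sigma$ — finitely many rules, each a finite conjunction — that prevents the certificate or the verification time from blowing up, and this is where the hypothesis is used in an essential way. One should remark that this argument collapses to $\mathbf{P}$ when every $\sigma \in \Sigma^{>0}$ has order zero, but in general the nondeterministic guessing of the $C_k$ subsets is what places the problem in $\mathbf{NP}$ and not obviously lower.
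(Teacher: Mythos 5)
Your proof is correct, but it takes a different route from the paper. The paper's proof is a two-line appeal to machinery already in place: by Lemma \ref{L:pseud} an essentially finite separation subclass is essentially finite pseudoelementary relative to $\cA$, hence finitely axiomatisable in existential second-order logic, and membership of a finite structure in an existential second-order definable class is in $\mathbf{NP}$ by Fagin's theorem. You instead build the nondeterministic algorithm by hand: guess, for each tuple $\vec{a}_N$ satisfying $\mu$, the witnessing subsets $S_1^{\vec{a}_N},\ldots,S_K^{\vec{a}_N}$, and verify the finitely many conjuncts directly. These are really two presentations of the same underlying fact --- the family of subsets you guess is exactly the finite instantiation of the relations $R_k(\vec{x}_N,-)$ constructed in the proof of Lemma \ref{L:pseud}, and your verification step is the easy direction of Fagin's theorem made explicit --- but your version is self-contained and has the virtue of showing precisely where essential finiteness enters (it bounds both the certificate size and the verification time), whereas the paper's version is shorter and places the result in the standard descriptive-complexity context. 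Two small imprecisions worth tidying: the formulas $\mu$ and $\gamma_i$ are permitted to contain quantifiers of their own (only $\eta$ and $\psi_i$ are required to be quantifier-free), so the evaluation cost is governed by the quantifier rank of these fixed formulas rather than by $N$ or $M_i$ alone --- harmless, since they are constants independent of $A$, but your phrasing ``bounded quantifier depth $M_i$'' elides this; and you should note the degenerate case $\tau=\top$, where there is nothing beyond $\eta$ to verify.
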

\begin{proof}
By Lemma \ref{L:pseud}, an essentially finite separation subclass is essentially finite pseudoelementary relative to the superclass, and being essentially finite pseudoelementary is equivalent to being finitely axiomatisable in existential second-order logic. Finally, by Fagin's Theorem \cite{Fag73}, the problem of checking whether a finite structure satisfies an existential second-order sentence is in $\mathbf{NP}$.
\end{proof}

\begin{prop}\label{P:CclassesD}
Let $\cA$ be an elementary class and make the following definitions in addition to those of Proposition \ref{P:LclassesD}:
\begin{itemize}
\item[] $\mathbf{P}_\cA$ is the class of subclasses of $\cA$ whose decision problem relative to $\cA$ is in $\mathbf{P}$.  
\item[] $\mathbf{NP}_\cA$ is the class of subclasses of $\cA$ whose decision problem relative to $\cA$ is in $\mathbf{NP}$.  
\end{itemize}
Then Figure \ref{F:Pinc} represents the class inclusions that always hold, using the same system as in Figure \ref{F:LincD}, but with the addition that the existence of a full arrow in either of the places indicated by dotted arrows is equivalent to $\mathbf{P}=\mathbf{NP}$. 
\begin{figure}[htbp]
\centerline{
\xymatrix{\mathbf{FS}_\cA\ar@{->}[r]\ar@{..>}[d]_? & \mathbf{NP}_\cA\ar@{..>}@/^1pc/[dl]^?\\
\mathbf{P}_\cA\ar@{->}[ur]}}
\caption{Complexity class inclusions}
\label{F:Pinc}
\end{figure}
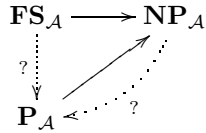
\end{prop}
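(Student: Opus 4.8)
The plan is to verify the two solid arrows directly, establish the stated equivalence for the two dotted arrows, and dispose of the missing arrows with a trivial observation. The solid arrow $\mathbf{FS}_\cA\to\mathbf{NP}_\cA$ is exactly the content of Lemma \ref{L:NP}, so nothing is needed there. The solid arrow $\mathbf{P}_\cA\to\mathbf{NP}_\cA$ is immediate from the containment $\mathbf{P}\subseteq\mathbf{NP}$: a membership problem solvable in deterministic polynomial time is in particular solvable in nondeterministic polynomial time. That there is no arrow into $\mathbf{FS}_\cA$ follows from noting that every member of $\mathbf{FS}_\cA$ is a separation subclass, hence elementary by Theorem \ref{T:same}, whereas membership of a subclass in $\mathbf{P}_\cA$ or $\mathbf{NP}_\cA$ constrains only its finite members. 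Taking $\cA$ to be all graphs and $\cB$ to consist of every finite graph together with one fixed infinite graph gives a subclass with a trivial (constant) decision problem that is not elementary, so it lies in $\mathbf{P}_\cA\cap\mathbf{NP}_\cA$ but not in $\mathbf{FS}_\cA$.

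For the dotted arrows, the easy direction is to assume $\mathbf{P}=\mathbf{NP}$. Then for every elementary $\cA$ any decision problem relative to $\cA$ that lies in $\mathbf{NP}$ already lies in $\mathbf{P}$, so $\mathbf{NP}_\cA=\mathbf{P}_\cA$; combined with the solid arrow $\mathbf{FS}_\cA\to\mathbf{NP}_\cA$ this yields both $\mathbf{FS}_\cA\subseteq\mathbf{P}_\cA$ and $\mathbf{NP}_\cA\subseteq\mathbf{P}_\cA$ for every $\cA$, i.e.\ a full arrow in each dotted position.

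The substantive direction is to show that a full arrow in either dotted position forces $\mathbf{P}=\mathbf{NP}$, and the key step is to exhibit a single elementary class carrying an essentially finite separation subclass whose relative decision problem is $\mathbf{NP}$-complete. I would take $\cA$ to be the basic elementary class of all graphs and $\cB$ the class of $N$-colourable graphs for a fixed $N\geq 3$. Introducing monadic predicates $C_1,\ldots,C_N$ for the colour classes, a closure rule with finitely many conjuncts expresses that they cover the vertex set, are pairwise disjoint, and are each independent, so $\cB$ is an essentially finite separation subclass of $\cA$ (this is the construction carried out in Section \ref{S:N-colour}); hence $\cB\in\mathbf{FS}_\cA\subseteq\mathbf{NP}_\cA$. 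Its decision problem relative to $\cA$ is exactly $N$-colourability, which is $\mathbf{NP}$-complete for $N\geq 3$. Thus if $\mathbf{FS}_\cA\subseteq\mathbf{P}_\cA$ held for all $\cA$ then $\cB\in\mathbf{P}_\cA$, placing an $\mathbf{NP}$-complete problem in $\mathbf{P}$; and since $\cB\in\mathbf{NP}_\cA$ the same conclusion follows from $\mathbf{NP}_\cA\subseteq\mathbf{P}_\cA$. Either way $\mathbf{P}=\mathbf{NP}$.

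The main obstacle is this last direction, and the only genuinely non-routine ingredients in it lie outside the separation-subclass machinery itself: the classical $\mathbf{NP}$-completeness of $N$-colourability for $N\geq 3$, and the verification that $N$-colourable graphs form an \emph{essentially finite} rather than merely essentially r.e.\ separation subclass, so that the example truly witnesses membership in $\mathbf{FS}_\cA$. Everything else is bookkeeping.
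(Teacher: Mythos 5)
Your proof is correct and follows the same skeleton as the paper's: the solid arrow $\mathbf{FS}_\cA\to\mathbf{NP}_\cA$ from Lemma \ref{L:NP}, a non-elementary subclass with a trivial decision problem to block any arrow into $\mathbf{FS}_\cA$ (via Theorem \ref{T:same}), and an $\mathbf{NP}$-complete decision problem arising from an essentially finite separation subclass to force the dotted-arrow equivalence. The only real content of this proposition lies in the choice of witnesses, and there you diverge from the paper in both places. For the non-arrow into $\mathbf{FS}_\cA$ the paper takes $\cA$ to be all sets and $\cB$ the finite sets; your ``all finite graphs plus one fixed infinite graph'' works equally well (it fails closure under elementary equivalence by L\"owenheim--Skolem), though the extra infinite graph is unnecessary. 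For the equivalence with $\mathbf{P}=\mathbf{NP}$ the paper uses the $(4,4)$-representable posets, citing Van Alten's result that their finite membership problem is $\mathbf{NP}$-complete, whereas you use the $N$-colourable graphs for $N\geq 3$, already exhibited in Section \ref{S:N-colour} as an essentially finite separation subclass of the class of graphs. Your choice arguably buys something: it replaces a specialised external citation with the textbook $\mathbf{NP}$-completeness of $3$-colourability and reuses a construction the paper has already carried out in full, so the argument is self-contained. Both witnesses are valid, and your handling of the quantification (a single elementary $\cA$ suffices to refute the universally quantified arrow) is correct.
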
 
\begin{proof}
The arrow from $\mathbf{FS}_\cA$ to $\mathbf{NP}_\cA$ comes from Lemma \ref{L:NP}. To see that there is no arrow from $\mathbf{P}_\cA$ to $\mathbf{FS}_\cA$ let $\cA$ be, for example, the class of all sets, and let $\cB$ be the class of all finite sets. Then the decision problem for $\cB$ relative to $\cA$ is trivially in $\mathbf{P}$ (as every instance is a yes instance), but finiteness has no first-order characterisation, and thus cannot be formalised as a separation subclass (by Theorem \ref{T:same}). That there is no arrow from $\mathbf{NP}_\cA$ to $\mathbf{FS}_\cA$ follows immediately.  

If there is an arrow from $\mathbf{FS}_\cA$ to $\mathbf{P}_\cA$ then, for example, deciding whether a finite poset is $(4,4)$-representable is in $\mathbf{P}$, as the $(4,4)$-representable posets are an essentially finite separation subclass of the class of posets (see Example \ref{E:poset2}), and thus $\mathbf{P}=\mathbf{NP}$, as this problem is $\mathbf{NP}$-complete \cite{VanA16}. Conversely, if $\mathbf{P}=\mathbf{NP}$ then there is an arrow from $\mathbf{NP}_\cA$ to $\mathbf{P}_\cA$, and thus an arrow from $\mathbf{FS}_\cA$ to $\mathbf{P}_\cA$.  
\end{proof}

We have established that every essentially finite separation subclass of an elementary class is recursively axiomatisable relative to the superclass, and also that the converse does not hold in general (see Proposition \ref{P:LclassesD}). The following result says that the converse still doesn't hold when we restrict to varieties and recursively axiomatisable subvarieties.

\begin{prop}
There is a finitely axiomatised variety $\cA$, and a recursively axiomatised subvariety $\cB$ of $\cA$ such that $\cB$ is not an essentially finite separation subclass of $\cA$.
\end{prop}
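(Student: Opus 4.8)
The plan is to derive undecidability from the complexity bound of Lemma \ref{L:NP}. If $\cB$ were an essentially finite separation subclass of $\cA$, then by Lemma \ref{L:pseud} it would be essentially finite pseudoelementary relative to $\cA$, and hence by Lemma \ref{L:NP} (via Fagin's theorem) the decision problem for $\cB$ relative to $\cA$ would lie in $\mathbf{NP}$; in particular it would be \emph{decidable}. So it suffices to produce a finitely axiomatised variety $\cA$ together with a recursively axiomatised subvariety $\cB$ whose membership problem for finite structures is undecidable.

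To build such an example I would work in the signature with a single binary operation and take $\cA$ to be the variety of all groupoids, which is axiomatised by the empty set of equations and so is trivially a finitely axiomatised variety (any finitely based variety carrying the family below would do equally well). The essential ingredient is an \emph{infinite independent family} of groupoid identities: a sequence $(e_n)_{n\in\omega}$ together with finite groupoids $(A_n)_{n\in\omega}$, the assignment $n\mapsto A_n$ being computable, such that $A_n\not\models e_n$ while $A_n\models e_m$ for every $m\neq n$. Such families are standard in universal algebra, being exactly what underlies the existence of continuum many subvarieties of groupoids. Now fix a recursively enumerable, non-recursive set $W\subseteq\omega$, and set $E=\{e_n:n\in W\}$ and $\cB=\{A\in\cA:A\models E\}$.

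I would then verify the three required properties. First, $\cB$ is a subvariety of $\cA$, being defined by a set of equations. Second, $\cB$ is recursively axiomatised: since $W$ is r.e. the set $E$ is r.e., so $\cB$ has a recursively enumerable first-order axiomatisation relative to $\cA$, and hence a recursive one by Craig's trick (exactly as invoked in the proof of Corollary \ref{C:main}). Third, the independence property gives, for each $m$, that $A_m\models E$ if and only if $e_m\notin E$, i.e. if and only if $m\notin W$. Since $m\mapsto A_m$ is computable, this exhibits a many-one reduction of $\overline{W}$ to the membership problem for $\cB$ relative to $\cA$; as $\overline{W}$ is not recursively enumerable, and in particular not recursive, that membership problem is undecidable. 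By the reduction of the first paragraph, $\cB$ cannot be an essentially finite separation subclass of $\cA$, which is what we wanted.

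The main obstacle is the second ingredient of the construction, namely exhibiting a concrete infinite independent family $(e_n,A_n)$ with computable $n\mapsto A_n$; everything else is routine bookkeeping, since the reduction to undecidability is immediate from Lemmas \ref{L:pseud} and \ref{L:NP}, and the passage from "$\overline{W}$ appears on a computable witness family" to "undecidable decision problem over all finite groupoids" is just a many-one reduction and needs no recognition of arbitrary inputs. A secondary point to confirm is that the intended reading of \emph{recursively axiomatised subvariety} is met: since Craig's trick yields a recursive first-order axiomatisation of $\cB$ relative to $\cA$ from the r.e. equational base $E$, this holds under the natural interpretation, and one does not need the full equational theory of $\cB$ to be recursive.
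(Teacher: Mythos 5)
Your proposal is correct, and it shares the paper's logical skeleton --- an essentially finite separation subclass would, via Lemma \ref{L:pseud} and Lemma \ref{L:NP}, have an $\mathbf{NP}$ (hence decidable) membership problem for finite structures, so it suffices to exhibit a recursively axiomatised subvariety with an undecidable finite membership problem --- but it instantiates that skeleton with a different witness. The paper simply cites the Hirsch--Hodkinson theorem that finite representability of relation algebras is undecidable, taking $\cA = \mathbf{RA}$ and $\cB = \mathbf{RRA}$; you instead manufacture undecidability by hand, encoding a non-recursive r.e.\ set $W$ into an equational base $E=\{e_n : n\in W\}$ drawn from an independent family of groupoid identities with computable finite witnesses $A_n$. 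Your reduction is sound: $A_m\models E$ iff $m\notin W$, the map $m\mapsto A_m$ is computable, $\overline{W}$ is not recursive, and Craig's trick turns the r.e.\ base into a recursive axiomatisation, which matches the intended reading of ``recursively axiomatised'' (the paper's own example is likewise only known to have a recursive equational base, not a recursive equational theory). What the paper's route buys is brevity and the avoidance of any construction, at the cost of invoking a deep external theorem; what your route buys is a more elementary and flexible argument whose only non-trivial input is the standard universal-algebraic fact that an infinite independent family of groupoid identities with finite, effectively given, separating algebras exists. That fact is genuinely classical (it underlies the existence of continuum many groupoid varieties), but since you correctly identify it as the load-bearing ingredient, a complete write-up should either cite a specific construction or sketch one; as it stands this is the one point where your proof defers to folklore rather than to a named theorem.
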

\begin{proof}
The class $\mathbf{RRA}$ of representable relation algebras is a variety (by \cite{Tar55}, or see \cite[Theorem 3.37]{HirHod02}), and can be recursively axiomatised by equations (see \cite[Theorem 8.4]{HirHod02}), but the decision problem for $\mathbf{RRA}$ relative to the class of relation algebras, $\mathbf{RA}$, is not decidable (by \cite{HirHod01}, or see \cite[Theorem 18.13]{HirHod02}), and thus cannot be an essentially finite separation subclass (appealing to Lemma \ref{L:NP}).
\end{proof}

Finally, as in the argument used in the proof of Lemma \ref{L:NP}, an essentially finite separation subclass $\cB$ of a basic elementary class $\cA$ can be finitely axiomatised in existential second-order logic. Of course, it follows immediately from Fagin's Theorem  and Proposition \ref{P:CclassesD} that there are subclasses that are finitely axiomatisable in existential second-order logic relative to their superclasses that cannot be expressed as essentially finite separation subclasses.

\section{Applications}\label{S:Apps}
In this section we use the general theory of separation subclasses to get some axiomatisation results in graph theory and theoretical computer science.
\subsection{Disjoint union partial algebras}\label{S:dupa}
Here we deal with a class of structures introduced in \cite{HirMcL17}.
\begin{defn}
A \textbf{partial algebra} is a set equipped with a number of partial operations of fixed arities, and also possibly some constants. In order to accommodate this in first-order logic we think of partial algebras as relational structures, where each $n$-ary partial operation corresponds to an $(n+1)$-ary relation, and for each such relation $R$ we have a sentence
\[\forall x_1\ldots x_nyz\big((R(x_1,\ldots,x_n,y)\wedge R(x_1,\ldots,x_n,z))\rightarrow y\approx  z\big)\]
expressing that the associated partial function is well defined.  
\end{defn}

\begin{defn}
A \textbf{disjoint union partial algebra (DUPA)} is a partial algebra with a single ternary relation $\mathsf{d}$ (disjoint union). We will usually write $\mathsf{d}(x,y,z)$ as $x\dot{\cup}y = z$. 
\end{defn}

For more on DUPAs and their uses in computer science see \cite{HirMcL17}.

\begin{defn}[Representable DUPA]
A DUPA is \emph{representable} if it is isomorphic (as a relational structure) to a DUPA whose universe is a set of sets, and whose relation $\mathsf{d}$ is defined by
\[\mathsf{d}(X,Y,Z)\iff X\cap Y= \emptyset \text{ and } Z = X\cup Y.\]
\end{defn}
\
The following is a minor adaptation of \cite[Definition 4.1]{HirMcL17}

\begin{defn}[Basic sets]
If $A$ is a disjoint union partial algebra, and if $\Gamma\subseteq A$, then we say $\Gamma$ is \emph{basic} if:
\begin{enumerate}[(1)]
\item $a\dot\cup b\in\Gamma \implies$ either  $a\in \Gamma$ or $b\in \Gamma$.
\item If either $a\in \Gamma$ or $b\in \Gamma$, and if $a\dot\cup b$ is defined, then $a\dot\cup b\in \Gamma$.
\item If both $a\in \Gamma$ and $b\in\Gamma$ then $a\dot\cup b$ is not defined. 
\end{enumerate} 
\end{defn}

The word `basic' here refers to being part of the `base' of a representation as an algebra of sets. 

\begin{lemma}\label{L:dju}
If $A$ is a DUPA, then $A$ is representable if and only if:
\begin{enumerate}[(1)]
\item For all $a\neq b \in A$, there is a basic $\Gamma\subseteq A$ with either $a\in \Gamma$ and $b\not\in \Gamma$, or $b\in \Gamma$ and $a\not\in \Gamma$.  
\item For all $a, b \in A$, if $a\dot\cup b$ is undefined then there is a basic $\Gamma\subseteq A$ with $\{a,b\}\subseteq\Gamma$.
\end{enumerate}
\end{lemma}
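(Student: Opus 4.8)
The plan is to connect the basic subsets of $A$ with the "points" of a representation, proving the two implications separately.

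For the ($\Leftarrow$) direction I would manufacture a canonical representation out of the basic sets themselves. Take $X$ to be the collection of all basic subsets of $A$ and define $h\colon A\to\wp(X)$ by $h(a)=\{\Gamma\in X:a\in\Gamma\}$, and then argue that $h$ is an isomorphism of $A$ onto the set-DUPA carried by its image. Injectivity is immediate from condition (1): if $a\neq b$, a separating basic $\Gamma$ lies in exactly one of $h(a)$, $h(b)$. Preservation of $\dot\cup$ is a direct unwinding of the three clauses defining ``basic'': if $a\dot\cup b=c$ then clause (3) forbids any basic set from containing both $a$ and $b$, so $h(a)\cap h(b)=\emptyset$, while clauses (1) and (2) give $h(c)=h(a)\cup h(b)$. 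The one place genuine work is needed is \emph{reflection}: from $h(a)\cap h(b)=\emptyset$ and $h(c)=h(a)\cup h(b)$ I must recover $\mathsf{d}(a,b,c)$. This is exactly what condition (2) buys: were $a\dot\cup b$ undefined, condition (2) would supply a basic $\Gamma$ containing both $a$ and $b$, contradicting $h(a)\cap h(b)=\emptyset$; hence $a\dot\cup b$ equals some $c'$, and preservation together with injectivity force $c'=c$.

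For the ($\Rightarrow$) direction I would fix a representation and identify $A$ with an isomorphic algebra of sets. For each point $x$ of the base set, the ``point filter'' $\Gamma_x=\{a\in A:x\in a\}$ is basic, since clauses (1)--(3) translate respectively into the observations that $x$ lies in a disjoint union precisely when it lies in one of the parts, and that two sets both containing $x$ cannot be disjoint. Condition (1) of the lemma then follows because distinct elements have distinct images, so some point separates them; and condition (2) follows because an undefined disjoint union $a\dot\cup b$ forces $a$ and $b$ to overlap as sets, whence any common point $x$ yields a single basic set $\Gamma_x$ containing both.

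The step I expect to be the crux is the claim underpinning condition (2) in the ($\Rightarrow$) direction: that an undefined disjoint union must arise from a genuine set-theoretic overlap, rather than from the union simply failing to be named in the algebra. It is exactly this that makes a common basic set available, and dually it resurfaces in the ($\Leftarrow$) direction as the fact that the canonical image is closed under disjoint union. I would therefore isolate as a preliminary observation that in the representing algebra of sets the operation $\dot\cup$ is total on disjoint pairs (as it is in the motivating heap model), after which the remaining bookkeeping — checking the three clauses of ``basic'' and chasing the preservation/reflection biconditional — is routine.
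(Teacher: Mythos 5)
The paper offers no internal proof of this lemma at all --- its ``proof'' is simply the citation \cite[Lemma 4.2]{HirMcL17} --- so the comparison is really between your argument and the external source. Your backward direction is correct and complete: the canonical map $h(a)=\{\Gamma : a\in\Gamma\}$ into the powerset of the collection of basic subsets works exactly as you describe, with injectivity from condition (1), preservation of $\mathsf{d}$ from the three clauses defining ``basic'', and reflection from condition (2) together with injectivity.

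The forward direction, however, contains a genuine gap, and it sits exactly at the point you flagged as the crux. Your proposed preliminary observation --- that in the representing algebra of sets the operation $\dot\cup$ is total on disjoint pairs --- is false under the definition of representability used here. The universe of the representing DUPA is an arbitrary set of sets, and $\mathsf{d}(X,Y,Z)$ holds only when $Z=X\cup Y$ actually belongs to that universe; nothing forces the universe to contain $X\cup Y$ whenever $X$ and $Y$ are disjoint members. For instance, $\{\,\{1\},\{2\}\,\}$ with empty $\mathsf{d}$ is a representable DUPA in which $\{1\}\dot\cup\{2\}$ is undefined even though the two sets are disjoint. In such a case no point filter $\Gamma_x$ contains both elements, so your construction produces no witness for condition (2) of the lemma, although a witness does exist (here the whole universe is basic, since clause (3) of ``basic'' only constrains pairs whose disjoint union \emph{is} defined). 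To repair the argument you must treat the case where $a\cap b=\emptyset$ as sets but $a\cup b$ is absent from the universe, and exhibit a basic set containing $a$ and $b$ that is not of the form $\Gamma_x$ --- for example by closing $\{a,b\}$ under clauses (1) and (2) and then verifying clause (3), or by some other device. That case is where the real content of the cited lemma lies, and as written your proof does not cover it.
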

\begin{proof}
This is \cite[Lemma 4.2]{HirMcL17}.
\end{proof}

\begin{prop}\label{P:dju}
The class of representable DUPAs is an essentially finite separation subclass of the class of all DUPAs.
\end{prop}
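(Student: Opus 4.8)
The plan is to read off a separation scheme directly from the characterisation of representability in Lemma \ref{L:dju}. Here the signature $\sL$ has a single ternary relation symbol $\mathsf{d}$, and since $\cA$ is the class of all DUPAs the well-definedness of the associated partial function comes for free from membership in $\cA$. Throughout I would use a single unary predicate symbol $C$ (so $K=1$), with $C[A]$ intended to range over the basic subsets $\Gamma$ of a DUPA $A$.

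First I would encode the three closure conditions defining a basic set as a single $C$-closure rule $\tau = \tau_1\wedge\tau_2\wedge\tau_3$, where each $\tau_j$ has antecedent $\mathsf{d}(x,y,z)$ (a first-order $\sL$-formula) and a quantifier-free $\sL^+_{\{C\}}$ consequent: namely $\tau_1 = \forall xyz(\mathsf{d}(x,y,z)\ra(C(z)\ra(C(x)\vee C(y))))$ for condition (1), $\tau_2 = \forall xyz(\mathsf{d}(x,y,z)\ra((C(x)\vee C(y))\ra C(z)))$ for condition (2), and $\tau_3 = \forall xyz(\mathsf{d}(x,y,z)\ra\neg(C(x)\wedge C(y)))$ for condition (3). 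Each $\tau_j$ has exactly the shape required by Definition \ref{D:cRule}, and since there are only three of them, $\tau$ is a \emph{finite} closure rule. It is then routine to check that, for any DUPA $A$ and any $\Gamma\subseteq A$, interpreting $C$ as $\Gamma$ gives $A\models\tau$ precisely when $\Gamma$ is basic.

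Next I would write the two separation conditions of Lemma \ref{L:dju} as separation rules of order $1$. For the first, take $\mu_1(x_1,x_2) = x_1\not\approx x_2$ and the quantifier-free formula $\eta_1(x_1,x_2) = (C(x_1)\wedge\neg C(x_2))\vee(C(x_2)\wedge\neg C(x_1))$, giving $\sigma_1 = \forall x_1 x_2(\mu_1\ra\exists C(\eta_1\wedge\tau))$. For the second, the property ``$a\dot\cup b$ is undefined'' is expressed by the $\sL$-formula $\mu_2(x_1,x_2) = \forall z\,\neg\mathsf{d}(x_1,x_2,z)$, and I set $\eta_2(x_1,x_2) = C(x_1)\wedge C(x_2)$, giving $\sigma_2 = \forall x_1 x_2(\mu_2\ra\exists C(\eta_2\wedge\tau))$. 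In both cases $\eta$ is quantifier-free and $\mu$ is a genuine $\sL$-formula (the universal quantifier inside $\mu_2$ is permitted by Definition \ref{D:Srule}), so $\sigma_1$ and $\sigma_2$ are bona fide separation rules of order $1$; and since $\tau$ is finite, each is a finite separation rule.

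Finally, taking $\Sigma = \{\sigma_1,\sigma_2\}$ yields a finite scheme of finite rules, hence an essentially finite one. By the characterisation of basic sets verified above, $A\models\sigma_1$ says exactly that every pair of distinct elements is separated by some basic set, i.e.\ condition (1) of Lemma \ref{L:dju}, while $A\models\sigma_2$ says exactly that every pair with undefined disjoint union lies in a common basic set, i.e.\ condition (2). Hence $\{A\in\cA : A\models\Sigma\}$ is precisely the class of representable DUPAs by Lemma \ref{L:dju}, witnessing it as an essentially finite separation subclass of $\cA$. I expect no serious obstacle; the only points needing care are confirming that the disjunctive $\eta_1$ and the universally quantified $\mu_2$ genuinely fit the syntactic shape demanded by Definition \ref{D:Srule}, and keeping the two numbered lists---the closure conditions for basic sets and the conditions of Lemma \ref{L:dju}---from being conflated.
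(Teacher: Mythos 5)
Your proposal is correct and follows essentially the same route as the paper: the same single unary predicate $C$, the same three-conjunct closure rule encoding basicness, and the same two separation rules $\sigma_1,\sigma_2$ read off from Lemma \ref{L:dju} (your $\forall z\,\neg\mathsf{d}(x_1,x_2,z)$ is just the paper's $\neg\exists x_3\,\mathsf{d}(x_1,x_2,x_3)$ in equivalent form). The points you flag for care are indeed unproblematic, since Definition \ref{D:Srule} places no restriction on the quantifier shape of $\mu$ and allows arbitrary quantifier-free $\eta$.
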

\begin{proof}
Let $\sL = \{\mathsf{d}\}$ be the signature of disjoint union partial algebras, and let $\sL^+=\sL\cup\{C\}$, where $C$ is a unary predicate symbol. Define the following $\sL^+$-sentences:
\begin{align*}\tau_0 = \forall y_1 y_2y_3&\Big(\mathsf{d}(y_1,y_2,y_3)\ra \big(C(y_3)\ra (C(y_1)\vee C(y_2))\big)\Big)  \\
\tau_1= \forall y_1 y_2y_3&\Big(\mathsf{d}(y_1,y_2,y_3)\ra\big((C(y_1)\vee C(y_2))\ra C(y_3)\big)\Big)\\
\tau_2 = \forall y_1 y_2y_3&\Big(\mathsf{d}(y_1,y_2,y_3)\ra\big(\neg C(y_1)\vee \neg C(y_2)\big) \Big).\end{align*} 
Then $\tau = \tau_0\wedge \tau_1\wedge \tau_2$ states that the set defined by $C$ is basic, and, moreover, $\tau$ is a closure rule as defined in Definition \ref{D:cRule}. Now define
\[\sigma_1 = \forall x_1x_2\Big( \neg(x_1\approx x_2)\ra \exists C \Big( \big((C(x_1)\wedge \neg C(x_2)) \vee (C(x_2)\wedge \neg C(x_1)) \big) \wedge \tau \big)\Big), \]
and  
\[\sigma_2 = \forall x_1x_2\Big( \neg\exists x_3\mathsf{d}(x_1,x_2,x_3)\ra \exists C \big( C(x_1)\wedge C(x_2) \wedge \tau \big) \Big). \]
Then $\sigma_1$ and $\sigma_2$ are separation rules, as defined in Definition \ref{D:Srule}. Moreover, by Lemma \ref{L:dju}, $\{\sigma_1,\sigma_2\}$ axiomatises the class of  representable DUPAs relative to the class of all DUPAs, which is what we are required to prove. 
\end{proof}

Having established that the class of representable DUPAs is an essentially finite separation subclass of the class of all DUPAs (which is basic elementary), we can use general results for separation subclasses to easily prove some results that were obtained with more effort in \cite{HirMcL17}. For example:

\begin{cor}
The class of representable DUPAs is basic pseudoelementary.
\end{cor}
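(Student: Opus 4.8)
The plan is to chain together the two results already established for this class with the observation that the ambient class of all DUPAs is itself finitely axiomatisable. Recall that a class is \emph{basic pseudoelementary} precisely when it is essentially finite pseudoelementary relative to the class of \emph{all} $\sL$-structures. So the task is to promote the relative pseudoelementarity we can extract from Proposition \ref{P:dju} and Lemma \ref{L:pseud} into pseudoelementarity over the full class of $\sL$-structures, where $\sL=\{\mathsf{d}\}$.

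First I would note that the class of all DUPAs is basic elementary: it is cut out from the class of all $\sL$-structures by the single $\sL$-sentence
\[\delta = \forall x_1 x_2 y z\big((\mathsf{d}(x_1,x_2,y)\wedge \mathsf{d}(x_1,x_2,z))\rightarrow y\approx z\big)\]
expressing that $\mathsf{d}$ is the graph of a partial function. Next, Proposition \ref{P:dju} tells us the representable DUPAs form an essentially finite separation subclass of the class of all DUPAs, and Lemma \ref{L:pseud} then yields a finite extension $\sL'\supseteq\sL$ and a finite $\sL'$-theory $T$ such that a DUPA $A$ is representable if and only if the extra symbols of $\sL'$ can be interpreted on $A$ so that $A\models T$.

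The final step is to absorb $\delta$ into the witnessing theory. Since $\delta$ is an $\sL$-sentence it is in particular an $\sL'$-sentence, so $T\cup\{\delta\}$ is still a finite $\sL'$-theory. I would then verify the class equality
\[\{A \in \text{representable DUPAs}\} = \{A : \text{the extra symbols of }\sL'\text{ can be interpreted so that }A\models T\cup\{\delta\}\},\]
where the right-hand side ranges over all $\sL$-structures. The forward inclusion is immediate: a representable DUPA satisfies $\delta$ and admits an interpretation making $T$ hold. For the reverse, if some interpretation of the extra symbols gives $A\models T\cup\{\delta\}$, then $A\models\delta$, and because $\delta$ mentions only $\sL$-symbols its truth depends solely on the $\sL$-reduct, so $A$ is a genuine DUPA; since moreover $A\models T$ under this interpretation, the characterisation from Lemma \ref{L:pseud} places $A$ among the representable DUPAs.

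There is no real obstacle here; the only point requiring a moment's care is recognising that \emph{basic} pseudoelementary is measured relative to all $\sL$-structures rather than relative to the DUPAs, which is exactly why the finite axiomatisability of the DUPA class (via $\delta$) is needed to bridge the gap. The argument is a clean instance of the general principle that an essentially finite pseudoelementary class relative to a basic elementary superclass is basic pseudoelementary outright.
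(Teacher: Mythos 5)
Your proof is correct and follows the same route as the paper, which simply cites Proposition \ref{P:dju} and Lemma \ref{L:pseud}. You are in fact slightly more careful than the paper: you make explicit the step of conjoining the (finitely many) DUPA axioms to the witnessing theory $T$ in order to pass from pseudoelementarity relative to the class of DUPAs to pseudoelementarity over all $\sL$-structures, a detail the paper leaves implicit.
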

\begin{proof}
This follows from Proposition \ref{P:dju} and Lemma \ref{L:pseud}. 
\end{proof}

\begin{cor}
The class of representable DUPAs has a recursive axiomatisation in first-order logic.
\end{cor}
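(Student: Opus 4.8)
The plan is to invoke Corollary \ref{C:main} directly, and then patch up the difference between a relative and an absolute axiomatisation. First I would observe that the class $\cA$ of all DUPAs is elementary. Indeed, recalling the definition of a partial algebra, the class of all DUPAs is precisely the class of $\{\mathsf{d}\}$-structures satisfying the single first-order sentence
\[\delta = \forall xyzw\big((\mathsf{d}(x,y,z)\wedge \mathsf{d}(x,y,w))\ra z\approx w\big)\]
expressing that $\mathsf{d}$ is the graph of a partial ternary operation. So $\cA$ is in fact a basic elementary class.

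Next I would appeal to Proposition \ref{P:dju}, which tells us that the class $\cB$ of representable DUPAs is an essentially finite separation subclass of $\cA$. Since every essentially finite separation subclass is in particular essentially r.e., the hypotheses of Corollary \ref{C:main} are met: $\cA$ is elementary and $\cB$ is an essentially r.e. separation subclass of $\cA$. Corollary \ref{C:main} therefore yields a recursive set $T$ of first-order sentences axiomatising $\cB$ relative to $\cA$, that is, such that $\cB = \{A\in\cA : A\models T\}$.

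The only remaining point is to convert this relative axiomatisation into an absolute one. Since $\cA = \{A : A\models \delta\}$, the theory $T\cup\{\delta\}$ axiomatises $\cB$ among all $\{\mathsf{d}\}$-structures: a structure models $T\cup\{\delta\}$ precisely when it is a DUPA (by $\delta$) and lies in $\cB$ relative to $\cA$ (by $T$). As $T$ is recursive and $\{\delta\}$ is finite, $T\cup\{\delta\}$ is recursive, giving the desired recursive first-order axiomatisation of the representable DUPAs. There is no real obstacle here; the content of the result is entirely carried by Proposition \ref{P:dju} and Corollary \ref{C:main}, and the only care needed is the trivial observation that the superclass of all DUPAs is itself recursively (indeed finitely) axiomatisable, so that a relative axiomatisation can be promoted to an absolute one.
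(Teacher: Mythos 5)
Your proposal is correct and follows essentially the same route as the paper, which likewise derives the result from Proposition \ref{P:dju} together with the main axiomatisation theorem (the paper cites Theorem \ref{T:main} where you cite Corollary \ref{C:main}, but these amount to the same appeal). Your extra step of adjoining the sentence $\delta$ to promote the relative axiomatisation to an absolute one is a worthwhile point of care that the paper leaves implicit, but it does not constitute a different argument.
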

\begin{proof}
This follows from Proposition \ref{P:dju} and Theorem \ref{T:main}.
\end{proof}

Note that the appearance of $\neg\exists$ in $\sigma_2$ means the recursive axiomatisation generated is not universal. Indeed, by \cite[Corollary 3.3]{HirMcL17} we know that no such universal axiomatisation can exist.

\begin{cor}
The decision problem for the class of representable DUPAs is in $\mathbf{NP}$.
\end{cor}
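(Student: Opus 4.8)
The plan is to invoke the complexity results already established in the excerpt. The statement to prove is that the decision problem for the class of representable DUPAs is in $\mathbf{NP}$. By Proposition \ref{P:dju}, we have already shown that the class of representable DUPAs is an essentially finite separation subclass of the class of all DUPAs. This is precisely the hypothesis needed to apply Lemma \ref{L:NP}.

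First I would recall that Lemma \ref{L:NP} states that if $\cA$ is a class of $\sL$-structures and $\cB$ is an essentially finite separation subclass of $\cA$, then the decision problem for $\cB$ relative to $\cA$ is in $\mathbf{NP}$. Taking $\cA$ to be the class of all DUPAs and $\cB$ to be the class of representable DUPAs, the result follows immediately. Since the class of all DUPAs is a basic elementary class (the single well-definedness axiom for $\mathsf{d}$ being the only constraint), checking membership of a finite structure in $\cA$ is itself straightforward, so the decision problem relative to $\cA$ coincides with the absolute decision problem for representable DUPAs on finite structures.

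There is essentially no obstacle here: the proof is a one-line appeal to the machinery already built. The only conceptual work was done in Proposition \ref{P:dju}, where the separation scheme $\{\sigma_1,\sigma_2\}$ was exhibited and shown to be essentially finite (each $\sigma_i$ has a closure rule $\tau$ that is a conjunction of only finitely many clauses $\tau_0,\tau_1,\tau_2$, and the scheme itself contains only two rules). The underlying reason Lemma \ref{L:NP} applies is that an essentially finite separation subclass is essentially finite pseudoelementary relative to its superclass (by Lemma \ref{L:pseud}), hence finitely axiomatisable in existential second-order logic, and then Fagin's Theorem places the finite-model-checking problem for such sentences in $\mathbf{NP}$.

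Accordingly, the proof I would write is simply: \emph{This follows from Proposition \ref{P:dju} and Lemma \ref{L:NP}.}
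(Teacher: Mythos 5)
Your proposal is correct and matches the paper's own proof, which simply cites Lemma \ref{L:NP} (with Proposition \ref{P:dju} supplying the hypothesis that the representable DUPAs form an essentially finite separation subclass of the class of all DUPAs). The additional remarks about Lemma \ref{L:pseud} and Fagin's Theorem accurately restate the reasoning inside Lemma \ref{L:NP} but are not needed here.
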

\begin{proof}
This follows from Lemma \ref{L:NP}.
\end{proof}

\subsection{$N$-colourable graphs}\label{S:N-colour}
Here and elsewhere we assume all graphs are undirected and simple. Given $\leq N<\omega$, a graph $G = (V,E)$ is $N$-colourable if it is possible to assign to each vertex $v\in V$ one of $N$ colours in such a way that no adjacent vertices have the same colour. Equivalently, $G$ is $N$-colourable if there is a homomorphism $h:G\to K_N$ where $K_N$ is the complete graph with $N$ vertices. Let $\cG$ be the (elementary) class of all graphs, and, given $1\leq N<\omega$, define $\cG_N$ to be the class of $N$-colourable graphs. Note that if $G$ is $N$-colourable via $h:G\to K_N$, and if $H$ is any other graph, then the composition of $h$ with the projection function, $h\circ \pi_G:G\times H\to K_N$, is a homomorphism. So, in particular, $\cG_N$ is closed under taking direct products for all $1\leq N<\omega$.   

Let $\sL =\{E\}$ be the standard signature for graphs (so $E$ stands for the binary edge relation). Let $\sL^+ = \sL\cup\{C_1,\ldots,C_N\}$ (here $C_n$ is a monadic predicate intended to pick out the vertices coloured by the $n$th colour), and define 
\[\tau_0 = \forall y \Big(\top \ra \bv^N_{n=1}C_n(y) \Big),\]
\[\tau_1 = \forall y \Big(\top \ra \bw^N_{m\neq n} \neg\big(C_m(y)\wedge C_n(y)\big) \Big),\]
\[\tau_2 = \forall y_1 y_2\Big( E(y_1,y_2) \ra \bw_{n=1}^N \neg\big(C_n(y_1)\wedge C_n(y_2)\big) \Big),\]
and
\[\sigma = \forall x \Big(\top \ra \exists C_1\ldots C_N \big(\top \wedge \tau_0\wedge\tau_1\wedge \tau_2 \big) \Big).\]

Then $\sigma$ is a separation rule, and if $\cC_N$ is the separation subclass of $\cG$ defined by $\{\sigma\}$, then $\cC_N$ is exactly the class of all $N$-colourable graphs. Thus we see that $\cC_N$ has the various pleasant properties associated with essentially finite separation subclasses of elementary classes. In particular, from Corollary \ref{C:main} we obtain a recursive universal axiomatisation for $\cC_N$ as a class of $\sL$-structures. This is not a new result. Indeed, \cite[Theorem 1.4]{Whe79} proves that $\cC_N$ has a recursive axiomatisation using universal Horn formulas, and that paper attributes to W. Taylor a proof of the same result using the De Bruijn-Erd\H{o}s theorem for graphs (i.e. that a graph is $N$-colourable when all its finite subgraphs are) \cite{deBErd51}. 

Now, being universal, $\cC_N$ is closed under isomorphisms, substructures and ultraproducts, and, as the class is also closed under taking direct products, it follows that $\cC_N$ is a universal Horn class (see e.g. \cite[Theorem V.2.23]{BurSan81}). The universal Horn theory of $\cC_N$ must be precisely the universal Horn consequences of our recursive axiomatisation, and so is also a recursively enumerable set, and consequently defines a recursive axiomatisation using Craig's trick. Thus the prima facie stronger result of \cite{Whe79} follows easily from our version, which we got more or less for free from the general theory. Note that such a universal Horn axiomatisation is the best that can be hoped for, as for $N\geq 2$ there can be no finite axiomatisation of $\cC_N$ \cite[Theorem 1.5]{Whe79}. Of course, $\cC_1$ is just the class of totally disconnected (edgeless) graphs.

Making good on the claims in the comments at the end of Section \ref{S:ax}, we can also use our game-generated axioms to find a simple proof that $\cC_N$ is not finitely axiomatisable when $N\geq 2$. First, for each $n\geq 1$ consider the cycle graph $C_{2n+1}$, and consider also the class $\cC_2$ as a separation subclass of $\cG$. Then the number of rounds $\exists$ can guarantee to survive in the simple $(C_{2n+1},\sigma)$ game scales linearly with $\log_2 n$. Here $\exists$'s strategy is to always colour vertices consistently with their closest neighbour, and $\forall$'s best strategy is to make the maximum size of a chain of uncoloured vertices as small as possible each round - see Figure \ref{F:graph} for an illustration. Note that $\exists$ can use her strategy against any strategy used by $\forall$. The one described is optimal for him in the sense that it wins fastest. 

Now, if $\cC_2$ were finitely axiomatisable then a graph would be $2$-colourable if and only if $\exists$ could guarantee survival for a fixed finite number of rounds. By choosing $n$ large enough, $\exists$ can find a graph $C_{2n+1}$ where she does have such a strategy, but which is nevertheless not $2$-colourable, and this would result in contradiction. Thus the axiomatisation of $\cC_2$ generated by Corollary \ref{C:main} cannot be logically equivalent to a finite subset of itself, and it follows that $\cC_2$ is not finitely axiomatisable. Note that as $C_{2n+1}\in \cC_3$ for all $n$, this argument also shows that $\cC_2$ is not finitely axiomatisable relative to $\cC_3$.  

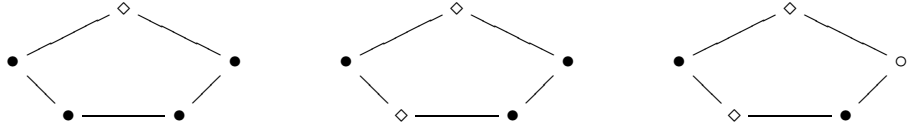
\begin{figure}[htbp]
\centering
  \begin{minipage}[b]{0.30\textwidth}
  \[
\xymatrix@=1em{&&\diamond\ar@{-}[drr]\ar@{-}[dll]\\
\bullet\ar@{-}[dr] &&&& \bullet\ar@{-}[dl] \\
&\bullet\ar@{-}[rr] & & \bullet}\] 
  \end{minipage}
  \hfill
  \begin{minipage}[b]{0.30\textwidth}
  \[
\xymatrix@=1em{&&\diamond\ar@{-}[drr]\ar@{-}[dll]\\
\bullet\ar@{-}[dr] &&&& \bullet\ar@{-}[dl] \\
&\diamond\ar@{-}[rr] & & \bullet}\]
  \end{minipage}
\hfill
  \begin{minipage}[b]{0.30\textwidth}
  \[
\xymatrix@=1em{&&\diamond\ar@{-}[drr]\ar@{-}[dll]\\
\bullet\ar@{-}[dr] &&&& \circ\ar@{-}[dl] \\
&\diamond\ar@{-}[rr] & & \bullet}\]
  \end{minipage}
\caption{A game played on $C_5$, with different `colours' being denoted by $\diamond$ and $\circ$. Here $\forall$ first asks $\exists$ to colour the top vertex, which she does with $\diamond$ (both players making arbitrary choices here). To minimise the maximum length of a chain of uncoloured elements $\forall$ then demands that one of the bottom two vertices be coloured (the left one say). To be consistent with the closest coloured vertex $\exists$ responds by colouring with $\diamond$. Now $\forall$ demands that either of the vertices between the two coloured ones (moving clockwise) be coloured (the higher one, say). To be consistent with the nearest neighbour $\exists$ must colour it with $\circ$. Now $\forall$ can force a forbidden colouring in the next round.}
\label{F:graph}
\end{figure}

Generalising, let $N>2$, and for each $n\geq 1$ define $G_n$ to be the graph obtained by taking the disjoint union of the cycle graph $C_{2n+1}$ and the complete graph $K_{N-2}$, and adding edges between every vertex of $C_{2n+1}$ and every vertex of $K_{N-2}$. Then, in the game where $\exists$ attempts to colour $G_n$ using $N$ colours, the choice of colours for $K_{N-2}$ forces her to attempt to colour $C_{2n+1}$ with two colours. We know this is impossible, but the number of rounds she can survive again scales with $\log_2n$. Here $\exists$'s strategy is to choose $N-2$ colours for $K_{N-2}$, and to use her strategy from the $N=2$ case for $C_{2n+1}$ with the two remaining colours. Thus $\cC_N$ is not finitely axiomatisable for all $N\geq 2$. This provides a proof of \cite[Theorem 1.5]{Whe79} that does not use the fact that the class of graphs with chromatic number $N$ is not elementary for all $N\geq 3$ \cite[Theorem 6.3]{Tay69}. Note that, combined with the result for $N= 2$, this argument also shows that $\cC_{N}$ is not finitely axiomatisable relative to $\cC_{N+1}$ for all $N\geq 2$.

Moreover, let $N\geq 3$ and define $\chi_N$ to be the class of graphs with chromatic number $N$. We can use our results on the lack of a finite axiomatisation for $\cC_{N-1}$ relative to $\cC_N$ to prove that $\chi_N$ is not elementary. To see this, first note that $\chi_N = \cC_N\setminus \cC_{N-1}$, and so $\cC_N = \cC_{N-1}\cup \chi_N$, which is a disjoint union. Now, as $\cC_N$ and $\cC_{N-1}$ are elementary, if $\chi_N$ is also elementary then $\cC_{N-1}$ will be finitely axiomatisable relative to $\cC_N$, by a variation of the compactness argument that says that if a class and its complement are elementary, then both will be basic elementary. As $\cC_{N-1}$ is not finitely axiomatisable relative to $\cC_N$, it follows that $\chi_N$ is not elementary. Thus we also obtain an alternative proof of \cite[Theorem 6.3]{Tay69} (the original uses Erd\H{o}s' famous result that for all $m,k\in\omega$ there is a finite graph with chromatic number $\geq m $ and no circuits of length $\leq k$ \cite{Erd59}).  

As a final observation, every first-order structure can be embedded into an ultraproduct of its finitely generated substructures (see e.g. \cite[Theorem V.2.14]{BurSan81}). Moreover, if a graph $G$ has the property that every finite subgraph is $N$-colourable, then, as $\cC_N$ is elementary, an ultraproduct of these subgraphs must also be $N$-colourable, by \L o\'s' theorem. Furthermore, as $\cC_N$ is universal, its substructures must also be in $\cC_N$, and so it follows that $G\in \cC_N$. Thus, from the axiomatisation of $\cC_N$ we also obtain a rather indirect proof of the De Bruijn-Erd\H{o}s theorem. We must note that much simpler proofs are well known, so this last result is essentially a curiosity.  

\subsection{Clique covers}\label{S:clique}   
Let $N\in\omega$. We say a graph $G = (V,E)$ has an $N$-clique cover if its vertices can be partitioned into $N$ subsets, each of which is a clique. In other words, if there is a partition $V_1,\ldots,V_N$ of $V$ such that the restriction of $E$ to $V_n$ produces a complete graph for all $n\in \{1,\ldots,N\}$. Note that a graph $G$ has an $N$-clique covering if and only if the complement graph $\bar{G} = (V, \bar{E})$ is $N$-colourable. As we can define $\bar{E}$ as $\neg E$, the results of Section 6.2 apply here, with the following exception. The class of undirected simple graphs with an $N$-clique cover is \emph{not} closed under taking direct products for any $N$. To see this, consider the product of the totally disconnected graph with $N$ vertices with itself. So this class does not have a universal Horn axiomatisation (by \cite[Theorem V.2.23]{BurSan81} again), though it does have a recursive universal axiomatisation. 

\subsection{Harmonious colourings}\label{S:harm}
The concept of a harmonious colouring for a graph was introduced in \cite{FHP82} and defined in its current form in \cite{HopKri83}. Given $N\in\omega$, we say a graph has a harmonious $N$-colouring if it has an $N$-colouring in which each pair of colours can be used to colour a pair of adjacent vertices at most once.  I.e. if we use $c(v)$ to denote the colour of a vertex, and if $v_1,v_2,v_3,v_4$ are vertices with $c(v_1)=c(v_3)$ and $c(v_2)=c(v_4)$, then either $v_1=v_3$ and $v_2=v_4$, or there cannot be edges between both $\{v_1,v_2\}$ and $\{v_3,v_4\}$. Define $\sL^+$, $\tau_0$, $\tau_1$ and $\tau_2$ as in Section \ref{S:N-colour}. In addition, we define a sentence $\tau_3$ expressing that the vertices of two distinct edges cannot be coloured by the same pattern of colours. This sentence says that if $E(v_1,v_2)$ and $E(v_3,v_4)$ (i.e. the edges exist), and if $v_1\neq v_3$ or $v_2\neq v_4$ (i.e. the edges are distinct), then either $v_1$ is not the same colour as $v_3$, or $v_2$ is not the same colour as $v_4$. 
 
\begin{align*}\tau_3 = \forall y_1 y_2 y_3 y_4\Big(&\big(\neg((y_1\approx y_3) \wedge(y_2\approx y_4))\wedge E(y_1,y_2)\wedge E(y_3,y_4)\big) \\ 
&\ra \bw_{m,n=1}^N \neg \big(C_m(y_1)\wedge C_n(y_2) \wedge C_m(y_3)\wedge C_n(y_4)\big) \Big).\end{align*}
Now define 
\[\sigma = \forall x \Big(\top \ra \exists C_1\ldots C_n \big(\top \wedge \tau_0\wedge\tau_1\wedge \tau_2\wedge \tau_3 \big) \Big).\]
Then $\sigma$ defines the class of graphs with harmonious $N$-colourings as an essentially finite separation subclass of $\cG$. It again follows from Corollary \ref{C:main} that this class has a universal recursive axiomatisation. Note that when $N\geq 2$ the class is not closed under taking direct products. To see this, note that a graph with a harmonious $N$-colouring can have at most ${N \choose 2}$ edges, as this is the maximum number of distinct colour pairs, and consider the product of the complete graphs $K_N$ and $K_2$. Each component has a harmonious $N$-colouring, but the product does not, simply because it has too many edges. So the class does not have a universal Horn axiomatisation. When $N=1$ the graphs must be totally disconnected just to have an $N$-colouring, which will be trivially harmonious.

From the fact that a graph with a harmonious $N$-colouring can have at most ${N \choose 2}$ edges, we can deduce that the axiomatisation produced here is equivalent to a finite one. Assuming $\forall$ plays in an efficient way, in other words, that he forces $\exists$ to define a new coloured pair each round if possible, he will either definitely be able to force a win in round $({N\choose 2} +1)$ at the latest, or he will have run out of useful moves in an earlier round. So, if $\exists$ has an $({N\choose 2} +1)$-strategy, then she has an $\omega$-strategy. Appealing to Lemma \ref{L:finite} proves the claim.

Note that it is proved in \cite{HopKri83} that the problem of deciding, when given a graph $G$ and a positive integer $N$, whether $G$ has a harmonious colouring with $N$-colours is $\mathbf{NP}$-complete. As it is known that checking whether a first-order sentence is valid in a finite structure can be done in polynomial time (see \cite[Proposition 3.1]{Var95}), we may wonder whether we have accidentally proved $\mathbf{P}=\mathbf{NP}$. The answer, sadly, is no, because given $(G,N)$ we have to \emph{construct} the appropriate sentence, which depends on $N$, before we can check it, and we have no reason to believe we can do this in polynomial time. There is an alternative version of the problem where $N$ is regarded as fixed, and so an instance is just a graph $G$. In this case our argument does indeed show the decision problem to be in $\mathbf{P}$, but this version of the problem is not $\mathbf{NP}$-complete.      

\section*{Acknowledgment}
The author would like to thank Robin Hirsch for, among other things, a suggestion that significantly simplified the key definitions, and the observation described in the preceding paragraph. The author would also like to thank the Department of Computer Science at UCL for hosting him while part of this paper was written. In addition, the author thanks the anonymous referee for their useful comments.  
  
\bibliographystyle{abbrv}

\end{document}